\theoremstyle{plain}
\newtheorem{thr}{Theorem}[section]
\newtheorem{lem}[thr]{Lemma}
\newtheorem{prop}[thr]{Proposition}
\newtheorem{conj}[thr]{Conjecture}
\theoremstyle{definition}
\newtheorem{defi}[thr]{Definition}
\newtheorem{remark}[thr]{Remark}
\newtheorem{prob}[thr]{Problem}
\newtheorem{claim}[thr]{Claim}
\def\vc{\overrightarrow}
\DeclareMathOperator{\ecc}{ecc}
\DeclareMathOperator{\rad}{rad}
\newcommand*{\myproofname}{Proof}
\newenvironment{claimproof}[1][\myproofname]{\begin{proof}[#1]}{\end{proof}}
\title{Maximum size of digraphs of given radius}
\author{Stijn Cambie\footnote{Department of Mathematics, Radboud University Nijmegen, Postbus 9010, 6500 GL Nijmegen, The Netherlands. Email: \href{mailto:stijn.cambie@hotmail.com}{stijn.cambie@hotmail.com}. This work has been supported by a Vidi Grant of the Netherlands Organization for Scientific Research (NWO), grant number $639.032.614$.} }%
\date{}
\begin{document}
	\definecolor{xdxdff}{rgb}{0.49019607843137253,0.49019607843137253,1.}
	\definecolor{ududff}{rgb}{0.30196078431372547,0.30196078431372547,1.}
	
	\tikzstyle{every node}=[circle, draw, fill=black!50,
	inner sep=0pt, minimum width=4pt]

	\maketitle

	\begin{abstract}
		In $1967$, Vizing determined the maximum size of a graph with given order and radius. In $1973$, Fridman answered the same question for digraphs with given order and outradius. We investigate that question when restricting to biconnected digraphs. Biconnected digraphs are the digraphs with a finite total distance and hence the interesting ones, as we want to note a connection between minimizing the total distance and maximizing the size under the same constraints. 
		We characterize the extremal digraphs maximizing the size among all biconnected digraphs of order $n$ and outradius $3$, as well as when the order is sufficiently large compared to the outradius. As such, we solve a problem of Dankelmann asymptotically.
		We also consider these questions for bipartite digraphs and solve a second problem of Dankelmann 
		partially.
	\end{abstract}

	\section{Foreword}\label{sec:foreword}
	The order $\lvert V \rvert$ and size $\lvert E \rvert$ or $\lvert A \rvert$ are without any doubt the most basic values associated to a graph or digraph.
	The next natural parameters associated with a graph or digraph are degree-based or distance-based.
	Among the distance-based parameters, the total distance, diameter and radius are the first that pop up.
	As such, questions about the relationships between these quantities can be considered as some of the most fundamental questions in graph theory.
	We will focus on the questions about the maximum size, given order and radius for digraphs. In the latter case, we mostly focus on the outradius.	
	Hereby we give conjectures and asymptotic results answering some questions of Dankelmann~\cite{D15} for biconnected digraphs and biconnected bipartite digraphs.
	In Table~\ref{tbl:overview}, we give an overview of the (possible conjecturally) extremal graphs and digraphs attaining the maximum size given radius or outradius.
	We do this for both connected and biconnected digraphs, and consider both the general and the bipartite case. 
	
	The family of bipartite, biconnected digraphs seem to be special in the sense that it is the only considered family of digraph for which the extremal digraph is unique, if the corresponding Conjecture~\ref{conj:extr_bicon_bip_digraph_rad+ge4} is correct. This behaviour has been proven for even radius and sufficiently large order. 
	From a different angle, we show that the graphs minimizing the distance and those maximizing the size under the same conditions are somewhat related in many cases.
	This is true when $n$ is sufficiently large in terms of the radius, when considering both graphs as well as digraphs given outradius, but remarkably that is not the case for digraphs given radius.

	\begin{center}

	\begin{table}[h!]
		\centering
		\begin{tabular}{ | m{1.5cm} | m{7cm} | m{7cm} | }
			\hline
			 & \begin{minipage}[t]{7cm} \centering General 	\end{minipage} & \begin{minipage}[t]{7cm}\centering Bipartite 	\end{minipage}\\ 
			 \hline
			 &&\\
			Graphs
			&
			\begin{minipage}[t]{7cm}
				\centering
				\scalebox{0.95}{
				\begin{tikzpicture}
						\definecolor{cv0}{rgb}{0.0,0.0,0.0}
						\definecolor{c}{rgb}{1.0,1.0,1.0}
						
						\Vertex[L=\hbox{$K_{s}$},x=1cm,y=-1cm]{v0}	
						\Vertex[L=\hbox{$K_{n-2r+s}$},x=1,y=1]{v1}
						\Vertex[L=\hbox{$v_2$},x=3,y=2]{v2}
						\Vertex[L=\hbox{$v_3$},x=5,y=2]{v3}
						\Vertex[L=\hbox{$v_4$},x=6.5,y=1]{v4}
						\Vertex[L=\hbox{$v_5$},x=6.5,y=-1]{v5}
						\Vertex[L=\hbox{$v_{2r-1}$},x=3,y=-2]{v7}
						\Vertex[L=\hbox{$v_{2r-2}$},x=5,y=-2]{v6}	
						
						\Edge[lw=0.1cm,style={}](v0)(v1)
						\Edge[lw=0.1cm,style={}](v1)(v2)
						\Edge[lw=0.1cm,style={}](v2)(v3)
						\Edge[lw=0.1cm,style={}](v3)(v4)
						\Edge[lw=0.1cm,style={}](v4)(v5)	
						\Edge[lw=0.1cm,style=dotted](v5)(v6)	
						\Edge[lw=0.1cm,style={}](v6)(v7)	
						\Edge[lw=0.1cm,style={}](v7)(v0)
						\end{tikzpicture}
				}
						
			\end{minipage}
			& 
			\begin{minipage}[t]{7cm}
				\centering
				\scalebox{0.95}{
					\begin{tikzpicture}
					\definecolor{cv0}{rgb}{0.0,0.0,0.0}
					\definecolor{c}{rgb}{1.0,1.0,1.0}
					
					\Vertex[L=\hbox{a$K_{1}$},x=1cm,y=-1cm]{v0}	
					\Vertex[L=\hbox{b$K_{1}$},x=1,y=1]{v1}
					\Vertex[L=\hbox{c$K_1$},x=3,y=2]{v2}
					\Vertex[L=\hbox{$v_3$},x=5,y=2]{v3}
					\Vertex[L=\hbox{$v_4$},x=6.5,y=1]{v4}
					\Vertex[L=\hbox{$v_5$},x=6.5,y=-1]{v5}
					\Vertex[L=\hbox{$v_{2r-1}$},x=3,y=-2]{v7}
					\Vertex[L=\hbox{$v_{2r-2}$},x=5,y=-2]{v6}	
					
					\Edge[lw=0.1cm,style={}](v0)(v1)
					\Edge[lw=0.1cm,style={}](v1)(v2)
					\Edge[lw=0.1cm,style={}](v2)(v3)
					\Edge[lw=0.1cm,style={}](v3)(v4)
					\Edge[lw=0.1cm,style={}](v4)(v5)	
					\Edge[lw=0.1cm,style=dotted](v5)(v6)	
					\Edge[lw=0.1cm,style={}](v6)(v7)	
					\Edge[lw=0.1cm,style={}](v7)(v0)
					\end{tikzpicture}
				}
							
			\end{minipage}\\

		&	\begin{minipage}[t]{7cm} \centering $G_{n,r,s}$\\
		\cite{VZ67}	\end{minipage}&\begin{minipage}[t]{7cm} \centering blow-up $C_{2r}$ with stable sets\\	
		\cite{DSv11}\end{minipage}\\
		&&\\
		 \hline
			&&\\
			Digraphs
			&
			\begin{minipage}[t]{7cm}
				\centering
				\scalebox{0.9}{
				\begin{tikzpicture}
				
				\definecolor{cv0}{rgb}{0.0,0.0,0.0}
				\definecolor{c}{rgb}{1.0,1.0,1.0}

				\Vertex[L=\hbox{$v_0$},x=2,y=-2.5]{v0}
				\Vertex[L=\hbox{$K_{s}$},x=2,y=-1]{v1}
				\Vertex[L=\hbox{$K_{t}$},x=3.5,y=-1]{v2}
				\Vertex[L=\hbox{$v_3$},x=5,y=-1]{v3}
				\Vertex[L=\hbox{$v_4$},x=6.5,y=-1]{v4}
				\Vertex[L=\hbox{$v_r$},x=8,y=-1]{v5}

				\Edge[lw=0.1cm,style={post, right}](v0)(v1)
				\Edge[lw=0.1cm,style={post, right}](v2)(v1)
				\Edge[lw=0.1cm,style={post, right}](v3)(v2)
				\Edge[lw=0.1cm,style={post, right}](v4)(v3)

				\Edge[lw=0.1cm,style={post, right}](v1)(v2)
				\Edge[lw=0.1cm,style={post, right}](v2)(v3)
				\Edge[lw=0.1cm,style={post, right}](v3)(v4)
				\Edge[lw=0.05cm,style={dotted, right}](v4)(v5)
				
				\Edge[lw=0.1cm,style={post, bend left}](v3)(v1)
				\Edge[lw=0.1cm,style={post, bend left}](v4)(v1)	
				\Edge[lw=0.1cm,style={post, bend right}](v5)(v1)		
				\Edge[lw=0.1cm,style={post, bend left}](v4)(v2)	
				\Edge[lw=0.1cm,style={post, bend right}](v5)(v2)	
				\Edge[lw=0.1cm,style={post, bend right}](v5)(v3)	
				\Edge[lw=0.1cm,style={post, bend right}](v5)(v4)		
				\end{tikzpicture}}
					
			\end{minipage}
			& 
			\begin{minipage}[t]{7cm}
				
			\centering	
		\begin{tikzpicture}
		
		\definecolor{cv0}{rgb}{0.0,0.0,0.0}
		\definecolor{c}{rgb}{1.0,1.0,1.0}

		\Vertex[L=\hbox{$v_0$},x=2,y=-1]{v0}
		\Vertex[L=\hbox{$v_1$},x=3,y=1]{v1}
		\Vertex[L=\hbox{$v_2$},x=4,y=-1]{v2}
		\Vertex[L=\hbox{$a K_1$},x=5,y=1]{v3}
	
		\Vertex[L=\hbox{$b K_1$},x=6,y=-1]{v4}
		\Vertex[L=\hbox{$c K_1$},x=7,y=1]{v5}
		\Vertex[L=\hbox{$v_r$},x=8,y=-1]{v6}
	
		\Edge[lw=0.1cm,style={post, right}](v2)(v1)
		\Edge[lw=0.1cm,style={post, right}](v3)(v2)
		\Edge[lw=0.1cm,style={post, right}](v4)(v3)
		\Edge[lw=0.1cm,style={post, right}](v5)(v4)
		\Edge[lw=0.1cm,style={post, right}](v6)(v5)
		\Edge[lw=0.05cm,style={post, right}](v5)(v2)

		\Edge[lw=0.1cm,style={post, right}](v0)(v1)
		\Edge[lw=0.1cm,style={post, right}](v1)(v2)
		\Edge[lw=0.1cm,style={post, right}](v2)(v3)
		\Edge[lw=0.1cm,style={post, right}](v3)(v4)
		\Edge[lw=0.1cm,style={post, right}](v4)(v5)
		\Edge[lw=0.1cm,style={post, right}](v5)(v6)
		
		\Edge[lw=0.1cm,style={post, right}](v4)(v1)
		\Edge[lw=0.1cm,style={post, right}](v6)(v1)
		\Edge[lw=0.1cm,style={post, right}](v6)(v3)

		\end{tikzpicture}
						
			\end{minipage}\\
		&\begin{minipage}[t]{7cm} \centering
			$\overline{\Gamma}^\star_{n,r,1,s}$	\\ \cite{F}
		\end{minipage}	&\begin{minipage}[t]{7cm} \centering blow-up $\overline{\Gamma}^\star_{r+1} \cap K_{ \lceil \frac{r+1}2 \rceil, \lfloor \frac{r+1}2 \rfloor }$ with stable sets\\
		Section~\ref{sec:bipdigraph_maxSize}\end{minipage} \\
		&&\\
			 \hline
			&&\\
			\begin{minipage}[t]{1.6cm} Biconn. Digraphs\end{minipage}
			&
			\begin{minipage}[t]{7cm}
						\centering
					\scalebox{0.75}{
						\begin{tikzpicture}
						\definecolor{cv0}{rgb}{0.0,0.0,0.0}
						\definecolor{c}{rgb}{1.0,1.0,1.0}

						\Vertex[L=\hbox{$v_2$},x=9.5,y=0]{v1}
						\Vertex[L=\hbox{$v_r$},x=11.5,y=0]{v2}
						
						\Vertex[L=\hbox{$w_2$},x=5,y=0]{w1}
						\Vertex[L=\hbox{$w_r$},x=3,y=0]{w2}
						\Vertex[L=\hbox{$K_s$},x=6.5,y=0]{w0}
						
						\Vertex[L=\hbox{$K_{t}$},x=8cm,y=0.0cm]{v0}
						
						\Edge[lw=0.1cm,style={post, right}](v0)(v1)
						\Edge[lw=0.05cm,style={post, dotted, right}](v1)(v2)
						\Edge[lw=0.1cm,style={post, right}](w0)(v0)
						\Edge[lw=0.1cm,style={post, right}](v1)(v0)
						\Edge[lw=0.05cm,style={post, dotted, right}](v2)(v1)
						\Edge[lw=0.1cm,style={post, right}](v0)(w0)
						\Edge[lw=0.05cm,style={post, dotted, right}](w1)(w2)
						\Edge[lw=0.1cm,style={post, right}](w0)(w1)
						\Edge[lw=0.1cm,style={post, right}](w1)(w0)
						\Edge[lw=0.05cm,style={post, dotted, right}](w2)(w1)

						\Edge[lw=0.1cm,style={post, bend left}](w2)(w0)
						\Edge[lw=0.1cm,style={post, bend right}](w2)(v0)
						\Edge[lw=0.1cm,style={post, bend right}](w1)(v0)
						
						\Edge[lw=0.1cm,style={post, bend right=90}](v2)(w0)
						\Edge[lw=0.1cm,style={post, bend left}](v2)(v0)
						\Edge[lw=0.1cm,style={post, bend right=90}](v1)(w0)

						\end{tikzpicture}}					
			\end{minipage}
			& 
			\begin{minipage}[t]{7cm}
				\centering
			\scalebox{0.6}{
				\begin{tikzpicture}
				\definecolor{cv0}{rgb}{0.0,0.0,0.0}
				\definecolor{c}{rgb}{1.0,1.0,1.0}

				\Vertex[L=\hbox{$v_2$},x=9.5,y=0]{v1}
				\Vertex[L=\hbox{$v_3$},x=11,y=0]{v2}
				\Vertex[L=\hbox{$v_r$},x=12.5,y=0]{v3}
				\Vertex[L=\hbox{$w_2$},x=4.5,y=0]{w1}
				\Vertex[L=\hbox{$w_3$},x=3,y=0]{w2}
				\Vertex[L=\hbox{$w_r$},x=1.5,y=0]{w3}
				\Vertex[L=\hbox{$\lfloor a \rfloor K_1$},x=6,y=0]{w0}
				
				\Vertex[L=\hbox{$\lceil a \rceil K_1$},x=8cm,y=0.0cm]{v0}
				
				\Edge[lw=0.1cm,style={post, right}](v0)(v1)
				\Edge[lw=0.1cm,style={post, right}](v1)(v2)
				\Edge[lw=0.05cm,style={post, dotted, right}](v3)(v2)
				\Edge[lw=0.1cm,style={post, right}](w0)(v0)
				\Edge[lw=0.1cm,style={post, right}](v1)(v0)
				\Edge[lw=0.1cm,style={post, right}](v2)(v1)
				\Edge[lw=0.05cm,style={post, dotted, right}](v2)(v3)
				\Edge[lw=0.1cm,style={post, right}](v0)(w0)
				\Edge[lw=0.1cm,style={post, right}](w1)(w2)
				\Edge[lw=0.05cm,style={post, dotted, right}](w3)(w2)
				\Edge[lw=0.1cm,style={post, right}](w0)(w1)
				\Edge[lw=0.1cm,style={post, right}](w1)(w0)
				\Edge[lw=0.1cm,style={post, right}](w2)(w1)
				\Edge[lw=0.05cm,style={post, dotted, right}](w2)(w3)	
			
				\Edge[lw=0.1cm,style={post, bend right}](w2)(v0)
				\Edge[lw=0.1cm,style={post, bend left}](w3)(w0)

				\Edge[lw=0.1cm,style={post, bend right}](v2)(w0)
				
				\Edge[lw=0.1cm,style={post, bend left}](v3)(v0)

				\end{tikzpicture}}
					
			\end{minipage}\\
		&\begin{minipage}[t]{7cm} \centering $D_{n,r,s}$\\	
			Section~\ref{sec:Vizing_digraph} \end{minipage}&\begin{minipage}[t]{7cm}  \centering $D_{n,r,\lfloor \frac{n}{2}-r+1 \rfloor} \cap K_{ \lfloor \frac{n}{2} \rfloor, \lceil \frac{n}{2} \rceil}$	\\
			Section~\ref{sec:bipbicDigraphs_maxSize}	 \end{minipage}\\
		&&\\
		 \hline
		\end{tabular}
		\caption{Overview of extremal graphs}\label{tbl:overview}
	\end{table}
	
\end{center}

	\section{Introduction}\label{sec:intro}
	Since order, size and radius are all of the most natural parameters of a graph or digraph, it is not surprising that we can start with an exploration of some history that originated more than $50$ years ago.
	Here we give an overview about the maximum size of graphs and digraphs given their order and radius (outradius), as well as the new conjectures and partial results for biconnected digraphs. In each of the three cases, we also consider the bipartite case.
	
	People less familiar with some terminology can first go through Section~\ref{not&def} if necessary.

	In $1967$, Vadim Vizing~\cite{VZ67} determined the maximum size among all graphs of given order and radius. From his proof, one can conclude that the extremal graphs are exactly the graphs of the form $G_{n,r,s}$, when $r\ge 3$. Here $G_{n,r,s}$ is a cycle $C_{2r}$ in which we take blow-ups in $2$ consecutive vertices by cliques $K_s$ and $K_{n-2r+2-s}$, as defined in Section~\ref{not&def}. In Figure~\ref{fig:Gnrs} an example is presented when $r=4$.

	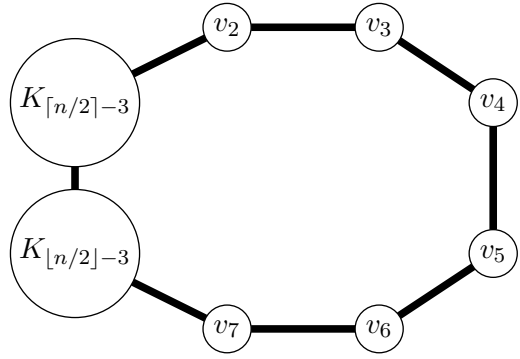
\begin{figure}[h]
		\centering
		\begin{tikzpicture}
		\definecolor{cv0}{rgb}{0.0,0.0,0.0}
		\definecolor{c}{rgb}{1.0,1.0,1.0}
		
		\Vertex[L=\hbox{$K_{\lfloor n/2 \rfloor-3}$},x=1cm,y=-1cm]{v0}	
		\Vertex[L=\hbox{$K_{\lceil n/2 \rceil-3}$},x=1,y=1]{v1}
		\Vertex[L=\hbox{$v_2$},x=3,y=2]{v2}
		\Vertex[L=\hbox{$v_3$},x=5,y=2]{v3}
		\Vertex[L=\hbox{$v_4$},x=6.5,y=1]{v4}
		\Vertex[L=\hbox{$v_5$},x=6.5,y=-1]{v5}
		\Vertex[L=\hbox{$v_7$},x=3,y=-2]{v7}
		\Vertex[L=\hbox{$v_6$},x=5,y=-2]{v6}	
		
		\Edge[lw=0.1cm,style={}](v0)(v1)
		\Edge[lw=0.1cm,style={}](v1)(v2)
		\Edge[lw=0.1cm,style={}](v2)(v3)
		\Edge[lw=0.1cm,style={}](v3)(v4)
		\Edge[lw=0.1cm,style={}](v4)(v5)	
		\Edge[lw=0.1cm,style={}](v5)(v6)	
		\Edge[lw=0.1cm,style={}](v6)(v7)	
		\Edge[lw=0.1cm,style={}](v7)(v0)
		\end{tikzpicture}
		\caption{The graph $G_{n,4,\lfloor n/2 \rfloor-3}$, a graph with radius $4$ maximizing size}\label{fig:Gnrs}
	\end{figure}
	
	\begin{thr}[\cite{VZ67}]\label{VZ}
		Let $f(n,r)$ be the maximum of edges in a graph with radius $r$.
		Then $f(n,1)=\binom{n}2, f(n,2)= \left \lfloor \frac{n(n-2)}2 \right \rfloor$ and 
		$$f(n,r)=\frac{(n-2r)^2+5n-6r}{2} \mbox{ when } n \ge 2r \ge 6.$$
		Equality occurs if and only if $G$ is a complete graph when $r=1$ or a complete graph minus a maximum matching (and an additional edge covering the remaining vertex when $n$ is odd) if $r=2.$
		When $r \ge 3$, equality occurs if and only if $G$ is isomorphic to $G_{n,r,s}$ where $1 \le s \le \frac{n-2r+2}{2}$.
	\end{thr}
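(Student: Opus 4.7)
The cases $r\in\{1,2\}$ are elementary. For $r=1$, a graph has radius $1$ if and only if it has a universal vertex, so $K_n$ uniquely maximizes $|E|$. For $r=2$, every vertex $v$ must have at least one non-neighbor (else $\ecc(v)=1$); a matching-style double count then yields at least $\lceil n/2\rceil$ non-edges, a bound met by $K_n$ minus a (near-)perfect matching.

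For $r\ge 3$, the lower bound is verified by a direct computation on $G_{n,r,s}$: the underlying $C_{2r}$ controls all distances, giving $\rad(G_{n,r,s})=r$, and summing the two clique contributions, the complete bipartite graph between the two blow-ups, and the remaining cycle edges gives exactly $\binom{n-2r+2}{2}+n-1=\frac{(n-2r)^2+5n-6r}{2}$. For the upper bound, I would fix a center $v$ with $\ecc(v)=r$ and look at its BFS layers $N_i=N_i(v)$ with $a_i=|N_i|$, so $a_0=1$, $a_i\ge 1$ and $\sum_i a_i=n$. The coarse bound $|E|\le\sum_i\binom{a_i}{2}+\sum_i a_ia_{i+1}$ is far from tight, and the whole game is to exploit the hypothesis that every non-central vertex also has eccentricity $\ge r$. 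The key observation here is that for $u\in N_i$ with $i\le r-2$, any witness $w$ with $d(u,w)\ge r$ must lie in $N_j$ with $j\ge r-i\ge 2$, by the triangle inequality; this forces $u$ to miss many adjacencies across levels, and this penalty grows if the inner levels are too large.

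The main obstacle, and the crux of the entire argument, is the structural reduction: in any extremal graph the BFS profile must be $(a_0,a_1,\ldots,a_r)=(1,n-2r+2,2,\ldots,2,1)$. My plan is a local exchange argument: show that any deviation from this profile (either $a_r\ge 2$ or $a_i\ge 3$ for some $2\le i\le r-1$) allows a modification which strictly increases $|E|$ while keeping the radius equal to $r$, contradicting extremality. Once the profile is pinned down, plugging $(1,n-2r+2,2,\ldots,2,1)$ into the refined bound collapses $|E|$ to exactly $\binom{n-2r+2}{2}+n-1$, matching the construction.

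For the uniqueness statement, note that after the structural reduction $N_2$ consists of precisely two vertices, which we may label $v_3$ and $v_{2r-1}$; each vertex of $N_1$ must then be adjacent to exactly one of them (otherwise either an edge is lost relative to the bound, or some vertex ends up with eccentricity $<r$). This partitions $N_0\cup N_1$ into two cliques of sizes $s$ and $n-2r+2-s$, and chasing the remaining adjacencies reconstructs the cycle skeleton of $G_{n,r,s}$, so $G\cong G_{n,r,s}$ for some $1\le s\le\frac{n-2r+2}{2}$.
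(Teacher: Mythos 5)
This theorem is Vizing's 1967 result; the paper states it with a citation to \cite{VZ67} and contains no proof of it, so there is no internal proof to compare against and your attempt must be judged on its own. Your lower bound is correct (the count $\binom{n-2r+2}{2}+n-1=\frac{(n-2r)^2+5n-6r}{2}$ for $G_{n,r,s}$ checks out) and the $r\in\{1,2\}$ cases are fine, but for $r\ge 3$ the two steps you yourself call the crux are both left as announcements rather than arguments. First, the ``refined bound'' is never written down: with the profile $(a_0,\dots,a_r)=(1,\,n-2r+2,\,2,\dots,2,\,1)$ the coarse bound $\sum_i\binom{a_i}{2}+\sum_i a_ia_{i+1}$ exceeds the target by $2n-r-5$, so essentially the entire content of the theorem lies in quantifying exactly how many cross-level adjacencies are forced to be absent by the requirement $\ecc(u)\ge r$ for \emph{every} vertex $u$, and that count is precisely what is missing. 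Second, the structural reduction to that profile is only a ``plan'': a local exchange that moves a vertex between BFS layers must be shown both to strictly increase $|E|$ and to preserve $\rad=r$ (moving mass inward can easily drop some vertex's eccentricity below $r$), and no such exchange is exhibited.

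There is also a concrete error in the uniqueness sketch. In $G_{n,r,s}$ with center $v$ chosen inside the clique $K_s$, one has $N_1(v)=(K_s\setminus\{v\})\cup K_{t}\cup\{v_{2r-1}\}$ with $t=n-2r+2-s$, and $N_2(v)=\{v_2,v_{2r-2}\}$; the vertices of $K_s\setminus\{v\}$ are adjacent to \emph{no} vertex of $N_2$, since only $K_t$ meets $v_2$ and only $v_{2r-1}$ meets $v_{2r-2}$. So your claim that every vertex of $N_1$ is adjacent to exactly one vertex of $N_2$, and the resulting partition of $N_0\cup N_1$ into two cliques of sizes $s$ and $n-2r+2-s$ indexed by that adjacency, fails already in the extremal graphs themselves, and the reconstruction of the cycle skeleton built on it does not go through as stated.
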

	
	The analogous question about the minimum size is trivial in the sense that it is always $n-1$ but has also been considered with e.g. a minimum degree condition in~\cite{DV}.
	The maximization problem has been considered under some additional constraints as well, for example when restricting to bipartite graphs. This has been done in~\cite[Thr. 2]{DSv11}. 
	Note that $G_{n,r,s}$ would become bipartite if one would remove the edges in the cliques $K_s$ and $K_{n-2r+2-s}$ in the extremal graph $G_{n,r,s}.$ It turns out that this gives an extremal graph when performing this on $G_{n,4,\lfloor n/2 \rfloor-3}$, but the characterization of the extremal graphs is different. We state the result only for the case where $r \ge 4$ (see \cite[Thr.2]{DSv11} for the cases with $r \le 3$).
	\begin{thr}[\cite{DSv11}]\label{thr:DSv11}
		When $r \ge 4$, a bipartite graph $D$ with order $n$ and radius $r$ has at most $\lfloor \left(\frac{n}{2}-r+2\right)^2 \rfloor +2r-4$ edges.
		The extremal graph is obtained by taking blow-ups in $3$ consecutive vertices of a cycle $C_{2r}$ with independent sets of order $a,b$ and $c$ respectively where $a+b+c=n-2r+3$ and $0\le\lvert a+c-(b+1) \rvert \le 1.$ 
	\end{thr}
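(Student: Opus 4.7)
My plan is a BFS-layer analysis from a center, strengthened by an eccentricity argument that forbids certain edges. Let $z\in V(D)$ be a center with $\ecc(z)=r$, and let $L_i$ denote the set of vertices at distance $i$ from $z$, so that $L_0=\{z\}$, $L_r\ne\emptyset$, and each layer is non-empty. Since $D$ is bipartite, every edge joins two consecutive layers, giving
\[
|E(D)|\;\le\;\sum_{i=0}^{r-1}|L_i|\cdot|L_{i+1}|.
\]
This raw sum is far from tight, however: many candidate layer-pair edges would shortcut a diametral pair and push some vertex's eccentricity below $r$.

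A shifting argument first reduces to the case $|L_i|=2$ for every $1\le i\le r-2$, with the two vertices of each inner layer lying on two vertex-disjoint paths $z,a_1,\ldots,a_{r-2}$ and $z,b_1,\ldots,b_{r-2}$ emanating from $z$. Relocating any excess vertex in an inner layer into $L_{r-1}$ or $L_r$ preserves or strictly increases the edge count, while an inner layer of size $1$ collapses the BFS tree into a single path, yielding strictly fewer edges than the target.

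The crux of the proof is then to forbid edges. I claim that there is no chord $a_ib_{i+1}$ or $b_ia_{i+1}$ in the BFS tree, and no vertex $u\in L_{r-1}$ is adjacent to both $a_{r-2}$ and $b_{r-2}$. Any such edge would create a shortcut from the offending vertex back to $z$ in at most $r-1$ steps through either parallel path, after which that vertex would reach every other vertex within $r-1$ steps, contradicting $\ecc(u)\ge r$; a direct distance calculation confirms the eccentricity drop uniformly in $r\ge 4$. Consequently $e(L_i,L_{i+1})=2$ for $1\le i\le r-3$ and $e(L_{r-2},L_{r-1})\le|L_{r-1}|$, so $L_{r-1}$ partitions into two parts $A$ and $C$, attached to $a_{r-2}$ and $b_{r-2}$ respectively. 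Writing $x=|L_{r-1}|$ and $y=|L_r|$ with $x+y=n-2r+3$,
\[
|E(D)|\;\le\;2+2(r-3)+x+xy\;=\;(2r-4)+x(y+1),
\]
and AM-GM applied to $x$ and $y+1$ (which sum to $n-2r+4$) gives $x(y+1)\le\lfloor(n/2-r+2)^2\rfloor$, establishing the claimed inequality.

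Tracing equality yields exactly the claimed extremal graphs: the two parallel paths together with the blow-ups of three consecutive cycle vertices reconstitute the cycle $C_{2r}$; $L_{r-1}=A\cup C$ has sizes $a$ and $c$; $L_r$ is an independent set of size $b$ complete to $L_{r-1}$; and $|a+c-(b+1)|\le 1$ is the integer equality case of AM-GM. The main obstacle is quantitative: making the chord/double-attachment eccentricity arguments rigorous, regardless of the actual sizes of $L_{r-1}$ and $L_r$, via careful distance accounting in the normalized BFS.
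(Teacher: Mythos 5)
First, note that the paper does not prove Theorem~\ref{thr:DSv11} at all: it is quoted from \cite{DSv11}, so there is no in-paper proof to compare against. Judged on its own merits, your outline identifies the correct extremal picture and the final count is right ($2+2(r-3)+x+xy=2r-4+x(y+1)$ with $x+(y+1)=n-2r+4$, then AM--GM), but the argument has a genuine gap exactly where the difficulty lies: the ``shifting argument'' that reduces to the normalized profile $|L_1|=\dots=|L_{r-2}|=2$ with two vertex-disjoint paths. An upper-bound proof must handle \emph{every} bipartite graph of radius $r$; to ``reduce'' to normalized graphs you would have to exhibit a transformation that never decreases the edge count while preserving bipartiteness and keeping the radius exactly $r$, and no such transformation is given (relocating a vertex to another layer changes its distance pattern and can destroy the radius condition, or can in principle lose edges). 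The raw bound $\sum_i|L_i||L_{i+1}|$ with unconstrained layer sizes is maximized by piling vertices into two adjacent middle layers, giving roughly $n^2/4$ edges; what rules such configurations out is that \emph{every} vertex, not just the center, must have eccentricity at least $r$, and your proposal only invokes this constraint for a handful of chord configurations \emph{after} the normalization has already been assumed. In other words, the eccentricity condition must be used to bound the degree of each vertex (or each $e(L_i,L_{i+1})$) for arbitrary layer sizes; deferring it until after the reduction makes the reduction itself unsupported.

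Two smaller issues in the same vein: even granting $|L_i|=2$ for the inner layers, the BFS tree need not split into two vertex-disjoint paths from $z$ (both vertices of $L_{i+1}$ may hang off the same vertex of $L_i$, which is consistent with $\rad=r$ since only the minimum eccentricity is constrained), so the chord analysis $a_ib_{i+1}\notin E$ does not cover all cases; and in the equality analysis you must also verify that the configuration forced by equality (complete bipartite between $L_{r-1}$ and $L_r$, both attachment classes $A,C$ nonempty) genuinely has radius $r$ rather than $r-1$, which you assert but do not check. The skeleton is salvageable, but the normalization step needs to be replaced by a direct degree or layer-edge bound valid for all admissible graphs.
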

	Note that the size is indeed $2r-4+(b+1)(a+c)=\lfloor \left(\frac{n}{2}-r+2\right)^2 \rfloor +2r-4.$
	As an example, the extremal graphs for $n=19$ and $r=4$ are exactly those presented in Figure~\ref{fig:DSv_exn=20}, where $1 \le a \le 4$. Here there are $6$ non-isomorphic extremal graphs ($a=1$ gives twice the same graph).
	
	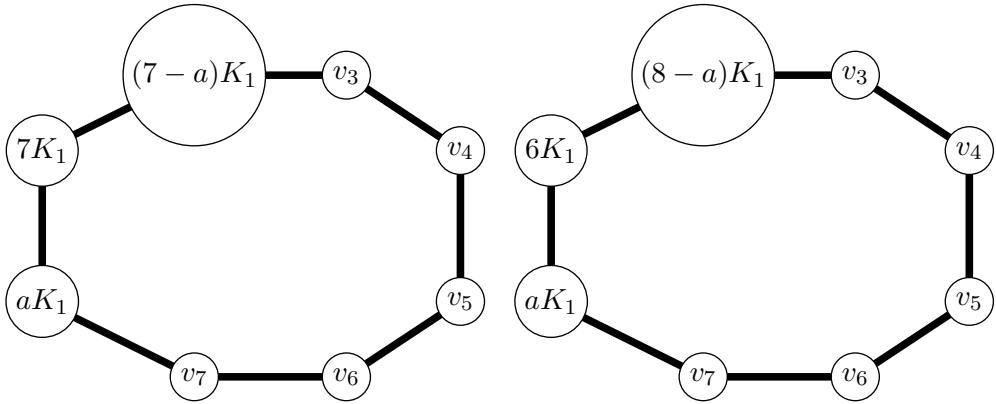
\begin{figure}[h]
		\centering
		\begin{tikzpicture}
		\definecolor{cv0}{rgb}{0.0,0.0,0.0}
		\definecolor{c}{rgb}{1.0,1.0,1.0}
		
		\Vertex[L=\hbox{$aK_1$},x=1cm,y=-1cm]{v0}	
		\Vertex[L=\hbox{$7K_1$},x=1,y=1]{v1}
		\Vertex[L=\hbox{$(7-a)K_1$},x=3,y=2]{v2}
		\Vertex[L=\hbox{$v_3$},x=5,y=2]{v3}
		\Vertex[L=\hbox{$v_4$},x=6.5,y=1]{v4}
		\Vertex[L=\hbox{$v_5$},x=6.5,y=-1]{v5}
		\Vertex[L=\hbox{$v_7$},x=3,y=-2]{v7}
		\Vertex[L=\hbox{$v_6$},x=5,y=-2]{v6}	
		
		\Edge[lw=0.1cm,style={}](v0)(v1)
		\Edge[lw=0.1cm,style={}](v1)(v2)
		\Edge[lw=0.1cm,style={}](v2)(v3)
		\Edge[lw=0.1cm,style={}](v3)(v4)
		\Edge[lw=0.1cm,style={}](v4)(v5)	
		\Edge[lw=0.1cm,style={}](v5)(v6)	
		\Edge[lw=0.1cm,style={}](v6)(v7)	
		\Edge[lw=0.1cm,style={}](v7)(v0)
		\end{tikzpicture}\quad
		\begin{tikzpicture}
		\definecolor{cv0}{rgb}{0.0,0.0,0.0}
		\definecolor{c}{rgb}{1.0,1.0,1.0}
		
		\Vertex[L=\hbox{$aK_1$},x=1cm,y=-1cm]{v0}	
		\Vertex[L=\hbox{$6K_1$},x=1,y=1]{v1}
		\Vertex[L=\hbox{$(8-a)K_1$},x=3,y=2]{v2}
		\Vertex[L=\hbox{$v_3$},x=5,y=2]{v3}
		\Vertex[L=\hbox{$v_4$},x=6.5,y=1]{v4}
		\Vertex[L=\hbox{$v_5$},x=6.5,y=-1]{v5}
		\Vertex[L=\hbox{$v_7$},x=3,y=-2]{v7}
		\Vertex[L=\hbox{$v_6$},x=5,y=-2]{v6}	
		
		\Edge[lw=0.1cm,style={}](v0)(v1)
		\Edge[lw=0.1cm,style={}](v1)(v2)
		\Edge[lw=0.1cm,style={}](v2)(v3)
		\Edge[lw=0.1cm,style={}](v3)(v4)
		\Edge[lw=0.1cm,style={}](v4)(v5)	
		\Edge[lw=0.1cm,style={}](v5)(v6)	
		\Edge[lw=0.1cm,style={}](v6)(v7)	
		\Edge[lw=0.1cm,style={}](v7)(v0)
		\end{tikzpicture}
		\caption{The bipartite graphs with radius $4$ and order $19$ maximizing size}\label{fig:DSv_exn=20}
	\end{figure}

	Chen, Wu and An~\cite{Chen} had conjectured that the graphs $G_{n,r,s}$ minimize the total distance among all graphs of order $n$ and radius $r.$
	This was proven for $n$ sufficiently large compared with $r$ in~\cite{cambie2019extremal}.
	It is natural to think there is a relation between maximizing the size and minimizing the total distance. More edges may imply smaller distances, in particular, more distances are equal to the minimum of $1.$
	Nevertheless, the correspondence is not exact since the cubical graph $Q_3$ also minimizes the total distance among the graphs of order $8$ and radius $3$, but it does not maximize the size.

	Knowing the result for graphs, it is natural to wonder about digraphs as well.
	In $1973$, Fridman~\cite[Thr.5\&6]{F} determined the maximum size of a digraph with given order and outradius. Also, he characterised the extremal graphs. Nevertheless, the extremal digraphs, are not biconnected (also called strongly connected) in this case and so the total distance would be infinite, which implies one cannot compare the relations at this point.
	
	\begin{thr}[\cite{F}]\label{thr:F}
		Let $D$ be a digraph of order $n$ with $rad^+=r\ge 2$.
		Then $\lvert A(D) \rvert \le n(n-r)+\frac{r^2-r-2}{2}$.
		When $r=2$, equality occurs if and only if every vertex of $D$ has outdegree $n-2$.
		For $r \ge 3$, equality occurs if and only if $D \sim \overline{\Gamma}^\star_{n,r,i,s}$ for certain $1\le i\le r-2$ and $1\le s \le n-r.$
	\end{thr}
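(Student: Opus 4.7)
My plan is to pick a vertex $v_0$ with $\ecc^+(v_0) = r$ and partition $V(D)$ by out-distance from $v_0$ into levels $N_0, N_1, \ldots, N_r$, writing $n_j = |N_j|$; thus $n_0 = 1$, $n_j \ge 1$ for every $j$, and $\sum_j n_j = n$. For any arc $u \to w$ with $u \in N_i$ and $w \in N_j$, the triangle inequality forces $j \le i+1$, since otherwise $v_0$ would reach $w$ in fewer than $j$ steps. Counting the ordered pairs blocked by this level-jump condition yields the preliminary bound
\[
|A(D)| \le n(n-1) - \sum_{i=0}^{r-2} n_i \sum_{j=i+2}^{r} n_j.
\]

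The crux, and what I expect to be the main obstacle, is to exploit the full strength of the outradius condition: \emph{every} vertex, not only $v_0$, must satisfy $\ecc^+(v) \ge r$. I would use this to upgrade the bound by showing that in any extremal $D$ the center $v_0$ has in-degree zero, which contributes an extra $n-1$ forbidden arcs. The intuition is that an arc $u \to v_0$ would let $u$ inherit the out-reach of $v_0$ with only one extra step, while in a configuration close to the bound $u$ also has many direct out-arcs forward in levels — a level-by-level check then gives $\ecc^+(u) < r$, contradicting outradius $r$. To make this rigorous, the cleanest route is a local-swap argument: if $u \to v_0$ is an arc while some allowed arc $u \to w$ is absent, one deletes the former, adds the latter, and verifies that the operation preserves both the outradius and the arc count; iterating reduces to the case where $v_0$ has in-degree zero.

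Combining both obstructions, the problem becomes to minimize $\sum_{i=0}^{r-2} n_i \sum_{j=i+2}^{r} n_j$ subject to $n_0 = 1$, $n_j \ge 1$, and $\sum n_j = n$. A shifting argument — transferring a single unit between adjacent $n_j$'s and checking the sum is non-increasing — shows the minimum is attained precisely when $n_j = 1$ outside a single consecutive pair $(n_{i_0}, n_{i_0+1})$ with $1 \le i_0 \le r-2$ and $n_{i_0} + n_{i_0+1} = n-r+1$. Substituting, the bound collapses to $n(n-r) + (r^2-r-2)/2$ independently of $i_0$ or of the split, which, together with the forced equalities (all non-forbidden arcs present, $v_0$ a source, the two big levels full cliques with full biconnection between them), yields exactly the family $\overline{\Gamma}^\star_{n,r,i,s}$ with $1 \le i \le r-2$ and $1 \le s \le n-r$ as the extremal configurations. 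The case $r=2$ is handled separately: $\ecc^+(v) \ge 2$ for every $v$ forces $d^+(v) \le n-2$, whence $|A(D)| \le n(n-2)$ with equality iff every out-degree equals $n-2$.
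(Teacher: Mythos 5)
First, a point of reference: the paper does not prove this statement at all --- Theorem~\ref{thr:F} is quoted from Fridman~\cite{F}, and the text only describes the extremal digraphs $\overline{\Gamma}^\star_{n,r,i,s}$. So your proposal can only be judged on its own merits. Your level decomposition, the count of level-jump forbidden arcs, the minimization of $\sum_{i} n_i \sum_{j \ge i+2} n_j$ (whose minimum $\binom{r}{2}+(r-2)(n-r-1)$ indeed turns $n(n-1)-(n-1)-S_{\min}$ into $n(n-r)+\frac{r^2-r-2}{2}$), and the $r=2$ case are all correct; the overall architecture is the right one.

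There are, however, two genuine gaps. The first is the step producing the extra $n-1$ missing arcs. You route it through ``in any extremal $D$ the centre $v_0$ has in-degree zero'' established by a local swap (delete $u \to v_0$, add an absent allowed arc $u \to w$). As described this step would fail: adding $u\to w$ can drop $\ecc^+(u)$, or the out-eccentricity of vertices that reach $u$, strictly below $r$, so the operation need not stay inside the class $\rad^+=r$, and the maximum size for a smaller outradius is larger, so no contradiction results. Moreover a ``WLOG modify the digraph'' argument cannot deliver the equality characterization, since it does not show that every extremal digraph has the property, only that some modified digraph does. The clean repair is pointwise and needs no swap: if $u \in N_i$ with $i \ge 1$ had \emph{every} allowed out-arc, i.e.\ $N^+(u) \supseteq (N_0 \cup \dots \cup N_{i+1})\setminus\{u\}$, then an easy induction on levels gives $d(u,x)\le k$ for all $x \in N_{i+k}$, hence $\ecc^+(u) \le \max(1,r-i) < r$, a contradiction. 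So each of the $n-1$ non-centre vertices is missing at least one out-arc into $N_{\le i+1}$, and these are disjoint from the level-jump arcs; this yields the same count unconditionally. The second gap is the uniqueness claim for $r\ge 3$: you assert that the forced equalities ``yield exactly the family $\overline{\Gamma}^\star_{n,r,i,s}$,'' but the real content is showing that the unique missing allowed out-arc of each vertex $u \ne v_0$ must be the arc $u \to v_0$ (equivalently, that $v_0$ has in-degree $0$ and is the unique vertex responsible for all out-eccentricities being $\ge r$). This needs a cascading argument --- if $d(u,w(u))\ge r \ge 3$ and $u$ misses only $w(u)$, then every vertex of the huge set $N^+(u)$ must also miss its arc to $w(u)$, forcing $w(z)=w(u)$ for all such $z$ and ultimately $w(u)=v_0$ for all $u$ --- which is exactly where the characterization is decided and which your sketch omits.
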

	
	We now explain the structure of $\overline{\Gamma}^\star_{n,r,i,s}$.
	Let $\overline{\Gamma}_{r}$ be the sum of a transitive tournament on $r$ vertices and the unique longest path $v_1v_2v_3\ldots v_r$ in its complement (as defined before, but with a shift of indices).
	Add an additional vertex $v_0$ and connect it to $v_1$, the unique vertex with out-eccentricity $r-1$ in $\overline{\Gamma}_{r}$. As such we have a graph $\overline{\Gamma}^\star_{r+1}$.
	For an index $1 \le i \le r-2$, take a blow-up by a $K_{s}$ in $v_i$ and by a $K_t$ in $v_{i+1}$, where $t=n-r-s+1.$ As such, we have an extremal digraph $\overline{\Gamma}^\star_{n,r,i,s}.$
	An example where $i=1$ is depicted in Figure~\ref{fig:digraph_Fridman}. Note that $d(u,v_0)=+ \infty$ for all vertices $u$ different from $v_0$ and thus this digraph is not biconnected.

	\begin{figure}[h]
		\centering	
		\begin{tikzpicture}
		
		\definecolor{cv0}{rgb}{0.0,0.0,0.0}
		\definecolor{c}{rgb}{1.0,1.0,1.0}

		\Vertex[L=\hbox{$v_0$},x=0,y=-1]{v0}
		\Vertex[L=\hbox{$K_{s}$},x=2,y=-1]{v1}
		\Vertex[L=\hbox{$K_{n-r-s+1}$},x=5,y=-1]{v2}
		\Vertex[L=\hbox{$v_3$},x=8,y=-1]{v3}
		\Vertex[L=\hbox{$v_4$},x=10,y=-1]{v4}
		
		\Vertex[L=\hbox{$v_r$},x=14,y=-1]{v5}

		\Edge[lw=0.1cm,style={post, right}](v0)(v1)
		\Edge[lw=0.1cm,style={post, right}](v2)(v1)
		\Edge[lw=0.1cm,style={post, right}](v3)(v2)
		\Edge[lw=0.1cm,style={post, right}](v4)(v3)

		\Edge[lw=0.1cm,style={post, right}](v1)(v2)
		\Edge[lw=0.1cm,style={post, right}](v2)(v3)
		\Edge[lw=0.1cm,style={post, right}](v3)(v4)
		\Edge[lw=0.05cm,style={dotted, right}](v4)(v5)
		
		\Edge[lw=0.1cm,style={post, bend left}](v3)(v1)
		\Edge[lw=0.1cm,style={post, bend left}](v4)(v1)	
		\Edge[lw=0.1cm,style={post, bend right}](v5)(v1)		
		\Edge[lw=0.1cm,style={post, bend left}](v4)(v2)	
		\Edge[lw=0.1cm,style={post, bend right}](v5)(v2)	
		\Edge[lw=0.1cm,style={post, bend right}](v5)(v3)	
		\Edge[lw=0.1cm,style={post, bend right}](v5)(v4)		
		\end{tikzpicture}

		\caption{An extremal digraph $\overline{\Gamma}^\star_{n,r,1,s}$ maximizing the size given outradius $r$ and order $n$}
		\label{fig:digraph_Fridman}
	\end{figure}
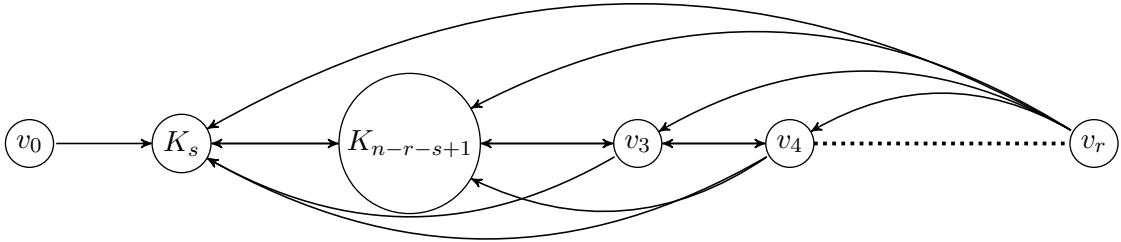

	Dankelmann~\cite{D15} considered the same question in the case of bipartite digraphs and determined the sharp upper bound for the size.
	First note that the bipartite digraph with maximum size is the bidirected $K_{ \lfloor n/2 \rfloor , \lceil n/2 \rceil}$, the digraph containing every arrow between vertices of two balanced independent sets. 
	The extremal digraph with maximum size given outradius, was of the form $\overline{\Gamma}^\star_{n,r,i,s}$ 
	Now one can guess that the extremal digraph(s) given the two conditions, both the outradius and the bipartiteness, is the intersection of the extremal digraphs for the separate conditions. This intuition turns out to be partially true.
	
	For any bipartite digraph with order $n$ and outradius $r$, the size can be upper bounded by the size of the intersection of the digraph $K_{ \lfloor n/2 \rfloor , \lceil n/2 \rceil}$ (maximizing size given bipartiteness) and some $\overline{\Gamma}^\star_{n,r,i,s }$ (maximizing size given outradius), where $s \in \{ \lfloor \frac{n-r+1}{2} \rfloor, \lceil \frac{n-r+1}{2} \rceil \}.$

	Nonetheless, as was the case in the graph case, there are way more extremal digraphs.
	Again there are blow-ups with three stable sets $aK_1, bK_1$ and $cK_1$ with $\lvert a+c-(b+1) \rvert \le 1.$ The precise statement and proof for this is mentioned in Section~\ref{sec:bipdigraph_maxSize}, see Theorem~\ref{thr:bipdigraph_maxSize}.

	Generally, in most problems in graph theory, one studies connected graphs and similarly, biconnected digraphs are interesting, as e.g. only then the notion of total distance makes sense.
	In this paper, we investigate the problem of the maximum size for biconnected digraphs given (out)radius. This question was asked before by Dankelmann~\cite[Prob. 2]{D15}.
	\begin{prob}
		Determine the maximum size of a strong (biconnected) digraph of given order and (out)radius.
	\end{prob}
	
	The digraph version of Vizing's result for biconnected digraphs is trivial when $r \in \{1,2\}.$
	When $r \ge 3$ and $n\ge 2r$, we conjecture that the extremal digraphs are exactly those of the form $D_{n,r,s}$ (remember Figure~\ref{fig:Dnrs}) where $1 \le s \le \frac{n-2r+2}{2}.$ 
	
	\begin{conj}\label{VZdi}
		Let $r \ge 3$ and $n \ge 2r$. Then the maximum size of biconnected digraphs with order $n$ and outradius $r$ is attained by $D_{n,r,1}.$ Furthermore the extremal digraphs are exactly the ones of the form $ D_{n,r,s}$ where $1 \le s \le \frac{n-2r+2}{2}$.
	\end{conj}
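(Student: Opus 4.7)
The plan is to carry out a Vizing-style BFS layer argument adapted to the biconnected directed setting. Fix a center $u$ with $\ecc^+(u) = r$ and define the out-BFS layers $V_i := \{v : d(u,v) = i\}$ for $0 \le i \le r$. Each layer is nonempty, so writing $n_i = |V_i|$ we have $n_0 = 1$, $n_i \ge 1$, and $\sum_i n_i = n$. Any out-arc from $v \in V_i$ must land in $(V_0 \cup \cdots \cup V_{i+1}) \setminus \{v\}$, bounding $|A(D)|$ in terms of the profile $(n_i)$. Biconnectedness guarantees return arcs from each layer toward $u$, and the outradius condition imposes $\ecc^+(w) \ge r$ for \emph{every} vertex $w$, not merely at the minimum; this forbids excessive shortcut arcs from deep layers back to the near layers.

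A direct calculation shows that in $D_{n,r,s}$, a BFS from any center in the central clique $K_t$ produces the profile
\[ (n_0, n_1, n_2, \ldots, n_{r-1}, n_r) = (1,\, n-2r+2,\, 2,\, 2,\, \ldots,\, 2,\, 1), \]
with $V_1$ housing the remainder of $K_s \cup K_t$ together with the first vertex of one tail, and $V_i$ for $2 \le i \le r-1$ containing one vertex from each of the two tail branches. Thus the proof reduces to two steps: (i) show that an extremal $D$ admits a center whose BFS has exactly this shape, and (ii) given this profile, show that the arc-maximizing configuration is precisely $D_{n,r,s}$ for some $s$.

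For step (i) I would attempt a local exchange argument: if $|V_r| \ge 2$ or if some intermediate $|V_i| \ge 3$, relocate a vertex into $V_1$ (or rebalance the two branches) in a way that preserves the layer distances from $u$ and biconnectedness, and verify that the arc count strictly increases—contradicting extremality. For step (ii), once the profile is pinned down, the within-$V_1$ arcs are maximized when $V_1$ forms a symmetric complete digraph, the forward arcs between consecutive layers are maximized when each layer pair is fully bidirected, and the back-arcs from the tail endpoints to $V_1$ are maximized subject to the constraints $\ecc^+(v_r) \ge r$ and $\ecc^+(w_r) \ge r$. These together pin down exactly the arc set of $D_{n,r,s}$, with the free parameter $s$ corresponding to the split of the $V_1$ weight into the internal cliques $K_s$ and $K_t$. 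A direct arc count confirms that every $D_{n,r,s}$ achieves the same total.

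The main obstacle is the exchange argument in step (i). For $r = 3$ the profile is automatically $(1, n_1, n_2, 1)$ and the monotonicity is almost immediate, explaining the paper's full resolution for $r = 3$. For $n$ large compared to $r$, a perturbation argument should work because any deviation from the optimal profile loses $\Theta(n)$ arcs while the freedom to deviate scales only with $r$. The delicate regime is $n$ close to $2r$, where the loss per deviation is comparable to the available flexibility; ruling out sporadic extremal candidates there seems to require finer ideas than a crude exchange, which is likely why the author states the general case as a conjecture rather than a theorem.
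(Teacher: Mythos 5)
First, note that the statement you are proving is stated in the paper only as a conjecture: the paper itself establishes it just for $r=3$ (Theorem~\ref{thr:Vz_rad+3}) and for $r\ge 4$ with $n$ sufficiently large (Theorem~\ref{mainVz_bicDi}), so there is no complete proof to match your sketch against. Judged as a route to even those partial results, your proposal has a genuine gap at its quantitative core. The layer bound you start from --- every out-arc from $V_i$ lands in $V_0\cup\dots\cup V_{i+1}$ --- is far too weak on its own: summed over a profile $(n_i)$ it is \emph{maximized} by pushing almost all vertices into the last two layers (where the permitted out-neighbourhood is all of $V$), giving roughly $n(n-1)$ arcs, vastly more than $(n-r+1)^2+(r-3)$. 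Everything therefore hinges on converting the condition that \emph{every} vertex has $\ecc^+\ge r$ into a usable per-vertex bound, and your sketch only gestures at this ("forbids excessive shortcut arcs") without supplying a mechanism. This is precisely what Proposition~\ref{proplem_size} in the paper does ($\deg(v)\le 2(n-1)-(2r-3)$, with equality forcing two disjoint outgoing tails), and even that bound is insufficient by itself --- summing it over all vertices gives $n(n-1)-n(2r-3)/2$, which exceeds the conjectured maximum --- which is why the paper additionally needs Lemma~\ref{lem2di_size} (a removable vertex that changes no distances) and an induction peeling off one vertex at a time. Your proposed substitute, a relocation/exchange argument, has the further unaddressed difficulty that moving a vertex into $V_1$ modifies the digraph and may destroy the certificate $\ecc^+(u')\ge r$ for other vertices $u'$ whose farthest vertex was the one you moved, so the modified digraph need not be a valid comparison object of outradius $r$.

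Your step (ii) also mischaracterizes the target structure. In $D_{n,r,s}$ the first layer $V_1=(K_s\setminus u)\cup K_t\cup\{v_2\}$ does \emph{not} induce a complete bidirected digraph (all arcs from the $K_t$-blob to $v_2$ are absent, exactly because their presence would force $\ecc^+<r$ for the $K_t$-vertices), and consecutive layers $V_i,V_{i+1}$ for $i\ge 1$ are nowhere near fully bidirected (e.g.\ only $v_2\to v_3$ and $K_t\to w_2$ go from $V_1$ to $V_2$). So "maximize each piece separately" does not land on $D_{n,r,s}$; the real content is that the eccentricity constraints carve out the two-tail structure, and that is where the paper spends all of its case analysis. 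Your closing diagnosis of the hard regime $n$ close to $2r$ is fair --- that is indeed why the general statement remains a conjecture --- but the BFS-profile-plus-exchange plan as written would not recover even the $r=3$ or large-$n$ cases without importing something equivalent to Proposition~\ref{proplem_size} and the vertex-removal lemma.
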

	In Section~\ref{sec:Vizing_digraph} we will prove Conjecture~\ref{VZdi} for $r=3$, as well as the case where $r \ge 4$ and $n$ is sufficiently large compared with $r$. By doing this, we make serious progress on \cite[Problem 2]{D15}.
	So at least for $n$ large enough, we conclude the extremal digraphs here correspond with those minimizing the total distance under the same conditions.

	When one is working with the diameter instead of radius, the results are known and one can see similar relationships, when comparing the extremal (di)graphs attaining the minimum average distance (Plesn\'{i}k~\cite{P84}) and maximum size (Ore~\cite{Ore68}, the digraph case being folklore, as mentioned in e.g.~\cite{D15}). Here the set of (di)graphs attaining the minimum average distance is a subset of the set of (di)graphs attaining the maximum size in this case.
	The extremal (di)graphs in the latter case are formed by
	taking two blow-ups at $2$ consecutive non-end vertices of a path of length $d$ (graph case) or at the digraph $\overline{ \Gamma}_{d+1}$ which is the sum of a transitive tournament on $d+1$ vertices and the unique longest path in its complement (digraph case).
	To get the extremal (di)graphs in the former case one needs to take blow-ups only at the 2 central vertices (or one central vertex).

	\begin{figure}[h]
		\begin{center}

			\begin{tikzpicture}
			\definecolor{cv0}{rgb}{0.0,0.0,0.0}
			\definecolor{c}{rgb}{1.0,1.0,1.0}
			
			\Vertex[L=\hbox{$v_0$},x=1cm,y=0.0cm]{v0}	
			\Vertex[L=\hbox{$v_1$},x=3,y=0]{v1}
			\Vertex[L=\hbox{$v_2$},x=5,y=0]{v2}
			\Vertex[L=\hbox{$v_3$},x=7,y=0]{v3}
			\Vertex[L=\hbox{$v_4$},x=9,y=0]{v4}
			\Vertex[L=\hbox{$v_5$},x=11,y=0]{v5}

			\Edge[lw=0.1cm,style={post, right}](v0)(v1)
			\Edge[lw=0.1cm,style={post, right}](v1)(v2)
			\Edge[lw=0.1cm,style={post, right}](v2)(v3)
			\Edge[lw=0.1cm,style={post, right}](v3)(v4)
			\Edge[lw=0.1cm,style={post, right}](v4)(v5)

			\Edge[lw=0.1cm,style={post, right}](v1)(v0)
			\Edge[lw=0.1cm,style={post, right}](v2)(v1)
			\Edge[lw=0.1cm,style={post, right}](v3)(v2)
			\Edge[lw=0.1cm,style={post, right}](v4)(v3)
			\Edge[lw=0.1cm,style={post, right}](v5)(v4)

			\Edge[lw=0.1cm,style={post, bend left}](v2)(v0)
			\Edge[lw=0.1cm,style={post, bend right}](v3)(v0)
			\Edge[lw=0.1cm,style={post, bend left}](v4)(v0)
			\Edge[lw=0.1cm,style={post, bend right}](v5)(v0)
			\Edge[lw=0.1cm,style={post, bend left}](v3)(v1)
			\Edge[lw=0.1cm,style={post, bend right}](v4)(v1)	
			\Edge[lw=0.1cm,style={post, bend left}](v5)(v1)		
			\Edge[lw=0.1cm,style={post, bend left}](v4)(v2)	
			\Edge[lw=0.1cm,style={post, bend right}](v5)(v2)	
			\Edge[lw=0.1cm,style={post, bend left}](v5)(v3)

			\end{tikzpicture}
		\end{center}
		\caption{The digraph $\overline{ \Gamma}_{d+1}$ for $d=5$}
		\label{fig:Gamd+1}
	\end{figure}

	Nevertheless, the statement that the set of extremal digraphs attaining the minimum average distance is a subset of the set of extremal digraphs maximizing the size is not always true, as is indicated in Subsection~\ref{subsec:maxMgivenRad}. 
	In that subsection we consider the question of maximizing the size given order and radius for digraphs.
	%
	
	Finally, in Section~\ref{sec:bipbicDigraphs_maxSize}, we consider the question on the maximum size for bipartite biconnected digraphs again. In contrast with the results of Theorem~\ref{thr:DSv11} and Theorem~\ref{thr:bipdigraph_maxSize}, we conjecture that the extremal biconnected bipartite digraph of maximum size given order and outradius $r \ge 4$ is unique, being a bipartite subdigraph of a balanced $D_{n,r,s}.$
	The precise statement is mentioned in Conjecture~\ref{conj:extr_bicon_bip_digraph_rad+ge4}.
	Furthermore in Section~\ref{sec:bipbicDigraphs_maxSize} we prove this conjecture for $n$ sufficiently large in terms of $r$ when $r$ is even. 
	So the difference with the two other theorems on the maximum size of bipartite graphs and digraphs is clear for these values.
	As such, we formulated a conjecture for \cite[Prob. 3]{D15} and proved it asymptotically (for even $r$).
	
	\subsection{Structure of the paper and outline of proofs}
	
	In this subsection, we give an overview with the main ideas used in Sections~\ref{sec:Vizing_digraph},~\ref{sec:bipdigraph_maxSize} and~\ref{sec:bipbicDigraphs_maxSize}.
	
	In Section~\ref{sec:Vizing_digraph}, we start with Theorem~\ref{thr:Vz_rad+3}. Here we prove Conjecture~\ref{VZdi} for $r=3.$ The main idea here is that by definition of the outradius, the outdegree of every vertex is bounded by $n-3$, while there are some high degree vertices for which equality is attained.
	For these vertices, there are precisely two other vertices which are at distance $2$ and $3$. Hence we know some structure of the extremal digraphs and we conclude after performing some case analysis. 
	
	We continue with progress on Conjecture~\ref{VZdi}, by showing that the conjecture does hold for $r>3$ and $n$ sufficiently large.
	For this, we go along the lines of the proof strategy sketched in the third section of~\cite{SC19}. First, in Lemma~\ref{lem2di_size} we prove that for large values of $n$, there does exist a vertex in the extremal digraph that does not influence the distance measures. This has been done by counting arguments applied to a big clique contained in the extremal digraph.
	Next in Proposition~\ref{proplem_size} we show that the total degree of any vertex in a digraph with order $n$ and outradius $r$ is bounded by $2(n-1)-(2r-3)$ and that equality can only occur under certain conditions.
	Here we look for the minimum total degree in the complement of the digraph, by taking the eccentricity conditions (consequence of the given outradius) into consideration.
	Finally in Theorem~\ref{mainVz_bicDi} we wrap up the proof, by noticing that for sufficiently large $n$, there exists a vertex $v$ in a subdigraph of any extremal digraph for which equality is attained in Proposition\ref{proplem_size}. From the resulting information that can be derived, after some case analysis we conclude.
	
	In Subsection~\ref{subsec:maxMgivenRad} we add some thoughts on the same question when the radius of a digraph is involved instead of the outradius. 
	
	In Section~\ref{sec:bipdigraph_maxSize}, we give the precise characterization of the bipartite digraphs with maximum size given their outradius. The maximum size was determined by Dankelmann~\cite{D15}. We extend his ideas. Hereby we determine upperbounds for the outneighbourhood of every vertex in terms of the number of vertices in the outneighbourhoods $N_i$ at distance $i$ from a vertex $v$ with $\ecc^+(v)=r$.
	Next, we prove an inequality on integers using a substitution and we are careful about the conditions for which equality does hold.
	
	Finally, in Section~\ref{sec:bipbicDigraphs_maxSize} we state Conjecture~\ref{conj:extr_bicon_bip_digraph_rad+ge4} about the bipartite biconnected digraphs of order $n$ and outradius $r \ge 4$ attaining the maximum size. 
	We prove it in the case that $r$ is even and $n$ is sufficiently large in terms of $r.$ Hereby we again start by proving that certain substructures are present in an extremal digraph, for example a large bidirected complete bipartite digraph all of whose vertices have large degree.
	Then by a counting argument, we prove that there do exist vertices in the extremal digraph(s) that can be removed without changing the remaining distances, once the order is sufficiently large.
	By noticing that these vertices can only attain a certain upper bound on their total degree if some restrictions are met, we are ready to do some technical case analysis to finish the proof of Theorem~\ref{thr:bipbicDigraphMaxSize}.

	\section{Notation and definitions}\label{not&def}
	
	A graph will be denoted by $G=(V,E)$ and 
	a digraph will be denoted by $D=(V,A).$
	The order $\lvert V \rvert$ will be denoted by $n$. 
	A clique or bidirected clique on $n$ vertices will be denoted by $K_n$. 
	A cycle or directed cycle of length $k$ will be denoted by $C_k$.
	The clique number of a graph $G$, $\omega(G)$, is the order of the largest clique which is a subgraph of $G.$
	The complement $G^c$ of graph $G=(V,E)$ is the graph with vertex set $V$ and edge set $E^c=\binom{V}{2} \backslash E.$ 
	The complement $D^c$ of a digraph $D$ is defined similarly, where the set of directed edges is the complement with respect to the edges of a bidirected clique.
	The reverse of a digraph $D=(V,A)$ is the digraph $D'=(V,A')$ with $A'=\{\vc{xy}\mid \vc{yx} \in A\}.$
	The intersection of two graphs $G=(V,E)$ and $G'=(V',E')$ is the graph $G \cap G'=(V \cap V', E \cap E')$ and analogously for digraphs.
	
	The degree of a vertex in a graph $\deg(v)$ equals the number of neighbours of the vertex $v$, i.e. $\deg(v)=\lvert N(v) \rvert.$
	In a digraph, we denote with $N^-(v)$ and $N^+(v)$ the (open) in- and outneighbourhood of a vertex $v$.
	The indegree $\deg^-$ and outdegree $\deg^+$ of a vertex $v$, equals the number of arrows ending in or starting from the vertex $v$, i.e. $\deg^+(v)=\lvert N^+(v) \rvert $ and $\deg^-(v)=\lvert N^-(v) \rvert $.
	The total degree $\deg$ of a vertex $v$ in a digraph is the sum of the in- and outdegree, i.e. $\deg(v)=\deg^+(v)+\deg^-(v).$
	
	Let $d(u, v)$ denote the distance between vertices $u$ and $v$ in a graph $G$ or digraph $D$, i.e. the number of edges or arrows in a shortest path from $u$ to $v$. 
	The eccentricity of a vertex $v$ in a graph equals $\ecc(v)=d(v,V)=\max_{u \in V} d(v,u).$
	The radius and diameter of a graph on vertex set $V$ are respectively equal to $\min_{v \in V} \ecc(v)$ and $\max_{v \in V} \ecc(v)=\max_{u,v \in V} d(u,v).$

	In the case of digraphs, the distance function between vertices is not symmetric and so there is a difference between the inner- and outer eccentricity 
	$\ecc^- (v)=d(V,v)=\max_{u \in V} d(u,v)$ and $\ecc^+(v)=d(v,V)=\max_{u \in V} d(v,u).$
	We use the conventions as in e.g. \cite{JG}.
	Radius, inradius and outradius are defined in Subsection $2.1$ in \cite{JG} or Subsection $3.1$ in \cite{JG2} but for clarity we define them in the next sentences.
	The in- and outradius of a digraph $D$ are defined by 
	$\rad^{-}(D)= \min\{d(V,x)\mid x \in V\}$ and $\rad^{+}(D)= \min\{d(x,V)\mid x \in V\}$.
	The radius of a digraph $D$ is defined as $\rad(D)=\min\{ \frac{d(x,V)+ d(V,x)}2 \mid x \in V \}.$ Sometimes authors refer to the outradius as radius (see e.g. Subsection $1.4$ in~\cite{CLZ}), as outradius is the most common one between those three definitions.
	
	The total distance, also called the Wiener index, of a graph $G$ equals the sum of distances between all unordered pairs of vertices, i.e. $W(G)=\sum_{\{u,v\} \subset V} d(u,v).$ 
	The average distance of a graph is $\mu(G)=\frac{W(G)}{\binom{n}{2}}$. 
	The Wiener index of a digraph equals the sum of distances between all ordered pairs of vertices, i.e. $W(D)=\sum_{(u,v) \in V^2} d(u,v).$
	The average distance of the digraph is $\mu(D)=\frac{W(D)}{n^2-n}.$
	A digraph is called biconnected (or a strong digraph) if $d(u,v)$ is finite for any $2$ vertices $u$ and $v$. 
	
	The statement $f(x)=O(g(x))$ as $x \to \infty$ implies that there exist fixed constants $x_0, M>0$, such that for all $x \ge x_0$ we have $\lvert f(x) \rvert \le M \lvert g(x) \rvert .$
	Analogously, $f(x)=\Omega(g(x))$ as $x \to \infty$ implies that there exist fixed constants $x_0, M>0$, such that for all $x \ge x_0$ we have $\lvert f(x) \rvert \ge M \lvert g(x) \rvert.$
	If $f(x)=\Omega(g(x))$ and $f(x)=O(g(x))$ as $x \to \infty$, then one uses $f(x)=\Theta(g(x))$ as $x \to \infty$. Sometimes we do not write the "as $x \to \infty$" if the context is clear.
	
	\begin{defi}
		Given a graph $G$ and a vertex $v$, the blow-up of a vertex $v$ of a graph $G$ by a graph $H$ is constructed as follows.
		Take $G \backslash v$ and connect all initial neighbours of $v$ with all vertices of a copy of $H.$	
		When taking the blow-up of a vertex $v$ of a digraph $D$ by a digraph $H$, a directed edge between a vertex $w$ of $D \backslash v$ and a vertex $z$ of $H$ is drawn if and only if initially there was a directed edge between $w$ and $v$ in the same direction. 
		When taking the blow-ups of multiple vertices, for neighbouring vertices $v_1$ and $v_2$, all vertices of the corresponding graphs $H_1$ and $H_2$ are connected as well in the blow-up (possible in one direction in the digraph case). Equivalently, one can take the blow-up of the different vertices one at a time at the resulting graph of the blow-up in the previous step. 
	\end{defi}
	
	Let $\overline{\Gamma}_{d+1}$ be the sum of a transitive tournament on $d+1$ vertices and the unique longest path in its complement. Equivalently, let $V=\{v_0,v_1, \ldots, v_d\}$ be the set of its vertices and $A=\{\vc{v_i v_j} \mid i \ge j-1 \}.$ An example for $d=5$ is presented in Figure~\ref{fig:Gamd+1}. 
	
	Let $\overline{\Gamma}_{n,d,i,s}$ (see Figure~\ref{fig:Gamnd1s}) be the digraph obtained from $\overline{\Gamma}_{d+1}$ by taking the blow-up of $v_i$ by a bidirected clique $K_s$ and a blow-up of $v_{i+1}$ by a bidirected clique $K_t=K_{n-d+1-s}$.

	\begin{figure}[h]
		\centering

		\begin{tikzpicture}
		\definecolor{cv0}{rgb}{0.0,0.0,0.0}
		\definecolor{c}{rgb}{1.0,1.0,1.0}
		
		\Vertex[L=\hbox{$v_0$},x=1cm,y=0.0cm]{v0}	
		\Vertex[L=\hbox{$K_s$},x=3,y=0]{v1}
		\Vertex[L=\hbox{$K_t$},x=5,y=0]{v2}
		\Vertex[L=\hbox{$v_3$},x=7,y=0]{v3}
		\Vertex[L=\hbox{$v_4$},x=9,y=0]{v4}
		\Vertex[L=\hbox{$v_5$},x=11,y=0]{v5}

		\Edge[lw=0.1cm,style={post, right}](v0)(v1)
		\Edge[lw=0.1cm,style={post, right}](v1)(v2)
		\Edge[lw=0.1cm,style={post, right}](v2)(v3)
		\Edge[lw=0.1cm,style={post, right}](v3)(v4)
		\Edge[lw=0.1cm,style={post, right}](v4)(v5)

		\Edge[lw=0.1cm,style={post, right}](v1)(v0)
		\Edge[lw=0.1cm,style={post, right}](v2)(v1)
		\Edge[lw=0.1cm,style={post, right}](v3)(v2)
		\Edge[lw=0.1cm,style={post, right}](v4)(v3)
		\Edge[lw=0.1cm,style={post, right}](v5)(v4)

		\Edge[lw=0.1cm,style={post, bend left}](v2)(v0)
		\Edge[lw=0.1cm,style={post, bend right}](v3)(v0)
		\Edge[lw=0.1cm,style={post, bend left}](v4)(v0)
		\Edge[lw=0.1cm,style={post, bend right}](v5)(v0)
		\Edge[lw=0.1cm,style={post, bend left}](v3)(v1)
		\Edge[lw=0.1cm,style={post, bend right}](v4)(v1)	
		\Edge[lw=0.1cm,style={post, bend left}](v5)(v1)		
		\Edge[lw=0.1cm,style={post, bend left}](v4)(v2)	
		\Edge[lw=0.1cm,style={post, bend right}](v5)(v2)	
		\Edge[lw=0.1cm,style={post, bend left}](v5)(v3)

		\end{tikzpicture}

		\caption{The digraph $\overline{ \Gamma}_{n,d,1,s}$ for $d=5$}
		\label{fig:Gamnd1s}
	\end{figure}
	
	Let $G_{n,r,s}$, where $n \ge 2r$ and $1 \le s \le \frac{n-2r+2}{2}$, be the graph obtained by taking two blow-ups of two consecutive vertices in a cycle $C_{2r}$ by cliques $K_s$ and $K_{n-2r+2-s}$ respectively. 
	Note that $\omega(G_{n,r,s})=n-2r+2.$
	
	
	Let $D_{2r,r,1}$ be a digraph with $V=\{v_1,v_2, \ldots v_r\} \cup \{w_1, w_2, \ldots, w_r\}$ and $$A=\{\vc{v_i v_j} \mid i \ge j-1 \}\cup \{\vc{w_i w_j} \mid i \ge j-1 \} \cup \{\vc{v_i w_1} \mid 1 \le i \le r \} \cup \{\vc{w_i v_1} \mid 1 \le i \le r \}.$$

	Let $D_{n,r,s}$, $n \ge 2r$ and $1 \le s \le \frac{n-2r+2}{2}$, be the digraph obtained by taking the blow-up of $v_1$ by a bidirected clique $K_s$ and a blow-up of $w_1$ by a bidirected clique $K_{n-2r+2-s}$. A depiction of this digraph is given in Figure~\ref{fig:Dnrs}.

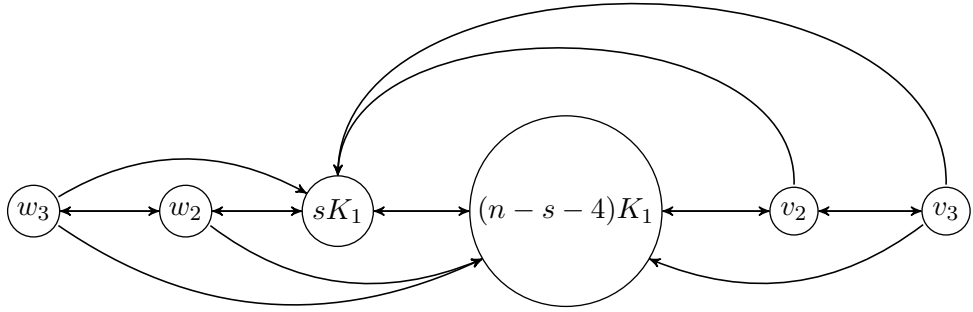
\begin{figure}[h]
	\centering

	\begin{tikzpicture}
	\definecolor{cv0}{rgb}{0.0,0.0,0.0}
	\definecolor{c}{rgb}{1.0,1.0,1.0}

	\Vertex[L=\hbox{$v_2$},x=13,y=0]{v1}
	\Vertex[L=\hbox{$v_3$},x=15,y=0]{v2}
	
	\Vertex[L=\hbox{$w_2$},x=5,y=0]{w1}
	\Vertex[L=\hbox{$w_3$},x=3,y=0]{w2}
	\Vertex[L=\hbox{$sK_1$},x=7,y=0]{w0}
	
	\Vertex[L=\hbox{$(n-s-4)K_1$},x=10cm,y=0.0cm]{v0}
	
	\Edge[lw=0.1cm,style={post, right}](v0)(v1)
	\Edge[lw=0.1cm,style={post, right}](v1)(v2)
	\Edge[lw=0.1cm,style={post, right}](w0)(v0)
	\Edge[lw=0.1cm,style={post, right}](v1)(v0)
	\Edge[lw=0.1cm,style={post, right}](v2)(v1)
	\Edge[lw=0.1cm,style={post, right}](v0)(w0)
	\Edge[lw=0.1cm,style={post, right}](w1)(w2)
	\Edge[lw=0.1cm,style={post, right}](w0)(w1)
	\Edge[lw=0.1cm,style={post, right}](w1)(w0)
	\Edge[lw=0.1cm,style={post, right}](w2)(w1)

	\Edge[lw=0.1cm,style={post, bend left}](w2)(w0)
	\Edge[lw=0.1cm,style={post, bend right}](w2)(v0)
	\Edge[lw=0.1cm,style={post, bend right}](w1)(v0)
	
	\Edge[lw=0.1cm,style={post, bend right=90}](v2)(w0)
	\Edge[lw=0.1cm,style={post, bend left}](v2)(v0)
	\Edge[lw=0.1cm,style={post, bend right=90}](v1)(w0)
	
	\end{tikzpicture}

	\caption{The digraph $D_{n,r,s}$ for $r=3$}
	\label{fig:Dnrs}
\end{figure}

\section{Maximum size biconnected digraphs}\label{sec:Vizing_digraph}

In this section, we prove that Conjecture~\ref{VZdi} is true when $r=3$, as well as in the case $r>3$ and $n$ large enough with respect to $r$.

\begin{thr}\label{thr:Vz_rad+3}
	Let $n \ge 6.$ Then any biconnected digraph $D=(V,A)$ with order $n$ and outradius $r=3$ satisfies $\lvert A \rvert \le (n-2)^2$. Equality holds if and only if $ D \cong D_{n,r,s}$ for some $1 \le s \le \frac{n-2r+2}{2}$.
\end{thr}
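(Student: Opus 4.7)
The plan is to exploit the rigid local structure forced on the high out-degree vertices by $\rad^+(D)=3$. Since $D$ is biconnected, for every vertex $v$ the sets $N^+_1(v), N^+_2(v), N^+_3(v)$ are nonempty, pairwise disjoint, and contained in $V\setminus\{v\}$, which forces $\deg^+(v)\le n-3$. Let $T:=\{v : \deg^+(v)=n-3\}$. For $v\in T$ equality gives $|N^+_2(v)|=|N^+_3(v)|=1$; write $a(v)$ and $b(v)$ for these two vertices, so that $V\setminus(\{v\}\cup N^+(v))=\{a(v),b(v)\}$. Because $\vc{a(v)b(v)}$ is an arc (the last step of any shortest $v$-to-$b(v)$ path) and every in-neighbour of $b(v)$ must lie in $V\setminus(\{v\}\cup N^+(v))$, we deduce $N^-(b(v))=\{a(v)\}$ and $\deg^-(b(v))=1$.

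Summing out-degrees yields $|A|\le |T|(n-3)+(n-|T|)(n-4)=n(n-4)+|T|$, so $|A|\le (n-2)^2$ would follow from $|T|\le 4$; equality in the theorem then additionally forces $\deg^+(v)=n-4$ for every $v\notin T$. Dually, setting $B:=b(T)$ and using $\deg^-(b)=1$ for every $b\in B$, the in-degree sum gives $|A|\le n^2-n-(n-2)|B|$, which is strictly smaller than $(n-2)^2$ whenever $|B|\ge 4$ and $n\ge 5$. In the equality regime one may therefore assume $|B|\le 3$.

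Next, I partition $T$ according to the ordered pair $(a(v),b(v))$: two full vertices sharing a pair $(a,b)$ have identical out-neighbourhoods $V\setminus\{v,a,b\}$, so they lie in $V\setminus\{a,b\}$ and are pairwise bidirectionally joined. Since the uniqueness of the in-neighbour of $b$ means the pair is determined by $b$, the groups are indexed by $B$, and a case analysis on $|B|\in\{1,2,3\}$ with group sizes summing to $|T|$ finishes the argument. The case $|B|=1$ is ruled out as follows: writing $B=\{b\}$ and $p$ for the common value $a(v)$, biconnectedness forces at least one vertex $x\in V\setminus T\setminus\{p,b\}$ with $\vc{xp}$ an arc, and any configuration where this $x$ has $\deg^+=n-4$ forces $\ecc^+(x)=2$ (both $b$ via $x\to p\to b$ and the one remaining missing vertex via any $v\in T\cap N^+(x)$ are reached in two steps), contradicting $\rad^+=3$. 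The case $|B|=3$ and the asymmetric sub-cases of $|B|=2$ fall by analogous eccentricity or biconnectedness arguments, exploiting that every $v\in T_{b_i}$ still requires $a_i$ (the unique in-neighbour of $b_i$) to be reachable in exactly two steps through some vertex of $N^+(v)$. The only surviving configuration has $|B|=2$, two disjoint pairs $\{a_1,b_1\},\{a_2,b_2\}$, and interlocked groups $T_{b_1}=\{a_2,b_2\}$ and $T_{b_2}=\{a_1,b_1\}$; the remaining $n-4$ vertices of out-degree $n-4$ then organise into the two blow-up cliques of $D_{n,3,s}$ for some admissible $s$.

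The main obstacle is precisely this final case analysis: the constraints to juggle---in-degree $1$ of each $b\in B$, out-degree $n-4$ for every non-full vertex in the equality case, $\ecc^+\ge 3$ for \emph{every} vertex (including the peripheral ones), and biconnectedness---are individually mild, but combining them to eliminate the stray configurations while keeping the bookkeeping clean is where most of the work lies.
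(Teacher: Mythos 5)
Your framework is genuinely different from the paper's: the paper fixes a single centre $c$ of out-degree $n-3$, splits $N^+(c)$ into the set $Y$ of vertices reaching $a_2$ in one step and the remainder $X$, and then cases on where the other three full-out-degree vertices sit relative to that one BFS structure; you instead classify \emph{all} full vertices by their far pair $(a(v),b(v))$ and double-count in-degrees. Your preliminary observations are correct, and the in-degree count is a nice idea that is not in the paper: $N^-(b(v))=\{a(v)\}$ does follow (an in-neighbour of $b(v)$ inside $\{v\}\cup N^+(v)$ would give $d(v,b(v))\le 2$), and $|B|\ge 4$ does force $|A|\le n(n-1)-(n-2)|B|<(n-2)^2$ for $n\ge 5$.

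There is, however, a genuine gap. The inequality $|A|\le (n-2)^2$ requires $|T|\le 4$, and you never establish it: the in-degree count bounds only the number of \emph{distinct} vertices $b(v)$, not the fibre sizes $|T_b|$, and for $|B|\in\{2,3\}$ neither degree sum alone excludes $|T|\ge 5$ (for instance $|B|=3$, $|T|=5$ gives $|A|\le n(n-4)+5$ from out-degrees and $|A|\le (n-2)^2+2$ from in-degrees). The only case you actually argue is $|B|=1$, and that argument is essentially sound; but the claim that $|B|=3$ and the asymmetric sub-cases of $|B|=2$ fall to ``analogous'' arguments is not safe. Those cases are governed by a different mechanism: for $v\in T_{b_1}$ the arc $\vc{vb_2}$ is present unless $b_2\in\{v,a_1\}$, and since $N^-(b_2)=\{a_2\}$ this forces $v=a_2$ or $b_2\in\{v,a_1\}$, which caps $|T_{b_1}|$ at $2$ \emph{except} in the sub-case $b_2=a_1$, where the fibre escapes the constraint entirely. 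In that sub-case the natural refinement (using $\ecc^+(a_2)\ge 3$ to show $N^+(a_2)$ avoids $T_{b_1}$) only brings the upper bound down to exactly $(n-2)^2$, so a separate structural argument is still needed to exclude equality there; your counting apparatus does not do it. Likewise the final assertion that the remaining $n-4$ vertices ``organise into the two blow-up cliques of $D_{n,3,s}$'' is precisely the equality characterization and is stated rather than proved. The skeleton is viable and attractively global, but the load-bearing endgame --- the part you yourself flag as the main obstacle --- is missing, and it is not a routine transcription of the $|B|=1$ argument.
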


\begin{proof}
	Note that $\lvert A \rvert = \sum_{v \in V} \deg^+(v).$
	Since the outradius of $D$ is $3$, we know $\ecc^+(v) \ge 3$ for every $v \in V$, which implies that $\deg^+(v) \le n-3$ for every $v \in V$.
	If $\lvert A (D) \rvert \ge n^2-4n+	4$, we know there are at least $4$ vertices with outdegree equal to $n-3.$
	For any vertex $v$, let $F(v)$ be the set of vertices $x$ in $V$ different from $v$ for which there is no arrow from $v$ to $x$, i.e. $F(v)=V \backslash N^+[v].$ Note that $\deg^+(v)=n-1-\lvert F(v)\rvert.$
	Let $c$ (being a centre of the digraph) have outdegree $n-3$ and let $a_2$ and $a_3$ be the $2$ vertices such that $d(c,a_2)=2$ and $d(c,a_3)=3.$
	Let $Y$ be the set of vertices $y$ such that $d(c,y)=1=d(y,a_2)$ and $X$ be the remaining vertices. Note that $X$ is not empty, as otherwise, $D$ has at most the size of a digraph formed by taking a blow-up of a vertex of a directed $C_4$, which has a smaller size than $(n-2)^2.$ For this, remark that there cannot be a directed edge from some $y \in Y$ or $a_2$ to $c$ as then the outeccentricity of that vertex is at most $2$. So there is a directed edge from $a_3$ to $c$, i.e. 
	$\vc{a_3c} \in A(D)$. Now for the same reason, there cannot be directed edges from $a_3$ to $a_2$ or some $y \in Y$.
	It is possible there is an edge from $a_2$ to some $y \in Y$, but then these vertices cannot have $Y \subset N^+[y]$ and hence the outdegree of these vertices is lower than in the blow-up of the directed $C_4,$ from which the bound on the size follows.
	\begin{figure}[h]
		\centering	
		\begin{tikzpicture}
		
		\definecolor{cv0}{rgb}{0.0,0.0,0.0}
		\definecolor{c}{rgb}{1.0,1.0,1.0}

		\Vertex[L=\hbox{$X$},x=0,y=-1]{x}
		\Vertex[L=\hbox{$c$},x=2,y=-1]{c}
		\Vertex[L=\hbox{$Y$},x=4,y=-1]{y}
		\Vertex[L=\hbox{$a_2$},x=6,y=-1]{a2}
		\Vertex[L=\hbox{$a_3$},x=8,y=-1]{a3}

		\Edge[lw=0.1cm,style={post, right}](c)(x)
		\Edge[lw=0.1cm,style={post, right}](c)(y)
		\Edge[lw=0.1cm,style={post, right}](y)(a2)
		\Edge[lw=0.1cm,style={post, right}](a2)(a3)

		\Edge[lw=0.1cm,style={post, bend right}, color=red](y)(c)
		\Edge[lw=0.1cm,style={post, bend left}, color =red](x)(a2)	
		\Edge[lw=0.1cm,style={post, bend right}, color =red](x)(a3)
		\Edge[lw=0.1cm,style={post, bend right}, color =red](c)(a3)
		\Edge[lw=0.1cm,style={post, bend right}, color =red](c)(a2)
		\Edge[lw=0.1cm,style={post, bend left}, color =red](y)(a3)
		\end{tikzpicture}

		\caption{Partial presentation of an extremal digraph with $\rad^+=3$ maximizing the size}
		\label{fig:towards_extremal_digraph_rad+3}
	\end{figure}
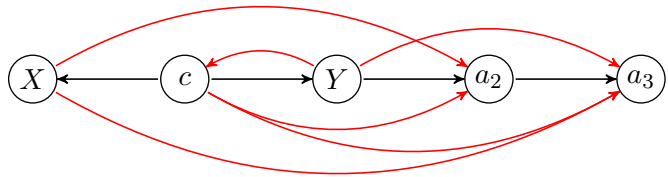
	
	Note that there is no directed edge from $y$ to $c$ since otherwise $\ecc^+(y)=2$.
	Part of the arrows for an extremal digraph have been presented (black) in Figure~\ref{fig:towards_extremal_digraph_rad+3}, as well as forbidden arrows (red).
	
	As there were at least $4$ vertices for which the outdegree equals $n-3$, there is at least one such a vertex in $X \cup Y$.
	Now we do some case analysis.

	\textbf{Case 1} There is a $y \in Y$ with $\deg^+(y)=n-3$.
	Hence there is an arrow from $y$ to all vertices different from $c$ and $a_3,$ i.e. $F(y)= \{a_3,c\}.$
	Now we see $\{a_2,a_3,c\} \subset F(x)$ for all $x \in X$, due to respectively the definition of $X$, $d(c,a_3)=3$ and $d(c,x)=1$, and the conditions $d(y,c)=3$ and $d(y,x)=1$.
	Similarly one has $\{c,y\} \subset F(a_2)$, $Y \cup \{a_2\} \subset F(a_3)$ and $\{c,a_3\} \subset F(y')$ for all $y' \in Y$ different from $y$. 
	Using these observations, we have $\lvert A (D) \rvert = n^2-n- \sum_{v \in V} \lvert F(v) \rvert \le n^2-4n+	4$ and equality is not possible, since then $\ecc^+(a_2)=2.$
	
	\textbf{Case 2} The set $X_2$ containing the vertices $x \in X$ with $\deg^+(x)=n-3$ has size at least $2$, note that we have $F(x)=\{a_2,a_3\}$ for those $x$.
	Furthermore we remark that $X_2 \cup \{c, a_3\} \subset F(y)$ for every $y \in Y$ (otherwise $\ecc^+(y)<3$) and $ X_2 \cup \{c\} \subset F(a_2)$ (otherwise $\ecc^+(a_2)<3$).
	We get $\lvert A (D) \rvert = n^2-n- \sum_{v \in V} \lvert F(v) \rvert \le n^2-4n+	4- \lvert Y \rvert \left( \lvert X_2 \rvert -1 \right) <n^2-4n+	4$ in this case.
	
	\textbf{Case 3} In the last case, there is one vertex $x_2 \in X$ with $\deg^+(x_2)=n-3$.
	Furthermore we need $\deg^+(a_2)=\deg^+(a_3)=\deg^+(c)=n-3$ and $\deg^+(v)=n-4$ for all the remaining vertices $v$.
	We know that $F(x_2)=F(c)=\{a_2, a_3\}$ (by the definitions) and $F(a_2)=\{c, x_2\}$ (as otherwise $\ecc^+(a_2)<3$).
	Also for every $y \in Y$ it should be that $F(y)=\{a_3,c,x_2\}$ as otherwise $\ecc^+(y) \ge 3$. 
	If $X=\{x_2\},$ then we get a contradiction as there should be a directed edge from $a_3$ to at least one of $c$ and $x_2$ (since $D$ is biconnected) and consequently there cannot be a directed edge to some $y \in Y$ or to $a_2$ (since $\ecc^+(a_3) \ge 3$). But then $\lvert F(a_3) \rvert \ge |Y|+1>2,$ contradiction.
	
	So now assume $X\not=\{x_2\},$ i.e. also the set $X \backslash x_2$ in Figure~\ref{fig:towards_extremal_digraph_rad+3_case3} is non-empty.
	By definition we already have $\{a_2,a_3\} \subset F(x)$ for any $x \in X.$
	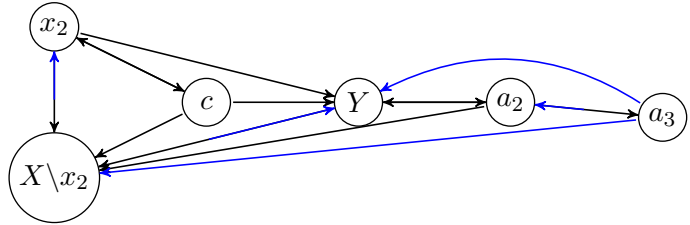
\begin{figure}[h]
		\centering	
		\begin{tikzpicture}
		
		\definecolor{cv0}{rgb}{0.0,0.0,0.0}
		\definecolor{c}{rgb}{1.0,1.0,1.0}

		\Vertex[L=\hbox{$x_2$},x=0,y=0]{x2}
		\Vertex[L=\hbox{$X\backslash x_2$},x=0,y=-2]{x}
		\Vertex[L=\hbox{$c$},x=2,y=-1]{c}
		\Vertex[L=\hbox{$Y$},x=4,y=-1]{y}
		\Vertex[L=\hbox{$a_2$},x=6,y=-1]{a2}
		\Vertex[L=\hbox{$a_3$},x=8,y=-1.2]{a3}

		\Edge[lw=0.1cm,style={post, right}](c)(x2)
		\Edge[lw=0.1cm,style={post, right}](c)(x)
		\Edge[lw=0.1cm,style={post, right}](c)(y)
		\Edge[lw=0.1cm,style={post, right}](y)(a2)
		\Edge[lw=0.1cm,style={post, right}](a2)(a3)
		\Edge[lw=0.1cm,style={post, right}](a2)(x)
		\Edge[lw=0.1cm,style={post, right}](a2)(y)
		\Edge[lw=0.1cm,style={post, right}](x2)(c)
		\Edge[lw=0.1cm,style={post, right}](x2)(y)
		\Edge[lw=0.1cm,style={post, right}](x2)(x)
		\Edge[lw=0.1cm,style={post, right}](y)(x)
		
		\Edge[lw=0.1cm,style={post, right}, color=blue](2,-1.5)(y)
		\Edge[lw=0.1cm,style={post, right}, color =blue](0,-1)(x2)
		\Edge[lw=0.1cm,style={post, right}, color =blue](7,-1.1)(a2)

		\Edge[lw=0.1cm,style={post, right}, color =blue](a3)(x)	
		\Edge[lw=0.1cm,style={post, bend right}, color=blue](a3)(y)		
		\end{tikzpicture}

		\caption{Partial presentation of an extremal digraph in case $3$}
		\label{fig:towards_extremal_digraph_rad+3_case3}
	\end{figure}
	
	Since $\ecc^+(a_2)=3$ and $F(a_2)=\{c, x_2\},$ we see that there can't be directed edges from $X \backslash \{x_2\}$ to both $c$ and $x_2$.
	Assume without loss of generality that there is no directed edge towards $c$, i.e. $F(x)=\{a_2,a_3,c\}$ for every $x \in X \backslash \{x_2\}$.
	Finally, we observe that there is no directed edge from $a_3$ to $c$ since $d(a_2,c)=3.$ If $\vc{a_3x_2} \in A$, we have $\ecc^+(a_3)<3$ since there is a directed edge from $a_3$ to $Y$ or $a_2$. This would implies that $F(a_3))=\{c,x_2\}$
	%
	and we conclude $D$ is isomorphic to $D_{n,r,s}$ where $s=\min \{ |X|-1, |Y|\}$ (see Figure~\ref{fig:towards_extremal_digraph_rad+3_case3} taking the blue edges also into account).
\end{proof}

Now, we prove that Conjecture~\ref{VZdi} holds when $n$ is large enough for a fixed $r$.
First, we note that the analogue of \cite[Lemma~4.3]{cambie2019extremal} holds.

\begin{lem}\label{lem2di_size}
	Let $r \ge 3$.
	There is a value $n_0(r)$ such that for any $n \ge n_0$ and any digraph $D=(V,A)$ of order $n$ and outradius $r$ with maximum size among such digraphs, there is a vertex $v \in D$ such that $D \backslash v$ has outradius $r$ and the distance between any $2$ vertices of $D \backslash v$ equals the distance between them in $D.$
\end{lem}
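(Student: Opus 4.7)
The plan is to find two \emph{twin} vertices $v,v'\in V(D)$: bidirectionally adjacent with $N^+(v)\setminus\{v'\}=N^+(v')\setminus\{v\}$ and $N^-(v)\setminus\{v'\}=N^-(v')\setminus\{v\}$. A routine check (rerouting any shortest walk through $v$ via $v'$, and observing that the twin property gives $d_D(u,v)=d_D(u,v')$ for every $u\ne v,v'$) shows that deleting $v$ preserves all pairwise distances in $D\setminus v$ and keeps the outradius at $r$. So it suffices to produce such a pair.

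The twins will come from a large bidirected clique $K\subseteq V(D)$. To build $K$, I combine the extremal lower bound
\[
|A(D)|\;\ge\;|A(D_{n,r,1})|\;=\;n^{2}-2(r-1)n+(r-2)(r+1)
\]
with a BFS upper bound coming from a centre $c$ of $D$: since arcs from $N_i(c)$ to $N_j(c)$ with $j\ge i+2$ are forbidden,
\[
|A(D)|\;\le\;n(n-1)-\sum_{0\le i<j-1\le r-1}|N_i(c)|\cdot|N_j(c)|.
\]
Comparing these two bounds pins the BFS profile close to that of $D_{n,r,1}$: one layer, say $N_1(c)$, has size $n-O(r)$, and apart from an $r$-bounded number of exceptional vertices every $u\in N_1(c)$ realises the maximum possible out-degree. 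This forces the non-exceptional part of $N_1(c)$, call it $K$, to be a bidirected clique with $|V\setminus K|\le g(r)$ for some function $g$ depending only on $r$.

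With such a $K$ in hand, I apply pigeonhole to the \emph{external type} of each $u\in K$, namely the pair $(N^+(u)\cap(V\setminus K),\,N^-(u)\cap(V\setminus K))$. There are at most $4^{g(r)}$ external types, so setting $n_0(r):=4^{g(r)}+g(r)+1$ ensures that for any $n\ge n_0(r)$ two vertices $u,u'\in K$ share a type. Combined with the bidirected adjacency inside $K$, $u$ and $u'$ are twins, and removing either one produces the vertex whose existence is claimed by the lemma.

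The technically delicate step is extracting $K$. The crude counting that ``non-bidirected pairs number at most $O(rn)$'' alone yields a bidirected clique of size only $\Omega(n/r)$, too small for the pigeonhole step with an $r$-dependent number of external types. Closing the gap requires the BFS-profile pinning above: every deviation from the profile of $D_{n,r,1}$, and every pair inside $N_1(c)$ that fails to be bidirected, chips a fixed, $r$-bounded amount away from the deficit $n(n-1)-|A(D)|\le(2r-3)n+O(r^{2})$, bounding the number of exceptional vertices purely in terms of $r$. Systematically ruling out the remaining profile deviations is where the bulk of the work lies.
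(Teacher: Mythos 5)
Your endgame (twins via pigeonhole on external types inside a bidirected clique) is sound as far as it goes, but the proposal has a genuine gap at its load-bearing step: the extraction of a bidirected clique $K$ with $\lvert V\setminus K\rvert\le g(r)$. The counting you invoke cannot deliver this. The arc deficit $n(n-1)-\lvert A(D)\rvert$ of a candidate extremal digraph is allowed to be as large as $(2r-3)n-O(r^2)$, while the portion of it that is \emph{forced} (by $\deg^+(v)\le n-r$, or by the total-degree bound $\deg(v)\le 2(n-1)-(2r-3)$ of Proposition~\ref{proplem_size}, or by the absence of arcs between BFS layers $N_i$ and $N_j$ with $j\ge i+2$) accounts for only a constant fraction of that budget. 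This leaves $\Theta(rn)$ of slack, which could a priori be spent on $\Theta(n)$ non-bidirected pairs spread as a union of $O(r)$ near-perfect matchings inside the dominant BFS layer; such a configuration caps the largest bidirected clique at $O(n/r)$ while still meeting every bound you list. So the claim that ``every pair inside $N_1(c)$ that fails to be bidirected chips a fixed, $r$-bounded amount away from the deficit, bounding the number of exceptional vertices purely in terms of $r$'' does not follow: chipping a bounded amount from a $\Theta(rn)$ budget only bounds the number of exceptions by $\Theta(rn)$. Ruling out these spread-out deficit patterns is essentially the content of the main theorem (Theorem~\ref{mainVz_bicDi}), which \emph{uses} this lemma; your lemma proof would be circular or would have to reprove the theorem. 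Note also that BFS-layer counting alone pins only two or three consecutive layers containing $n-O(\sqrt{rn})$ vertices, not $n-O_r(1)$.

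The paper sidesteps all of this by demanding much less of the clique. It takes $t=2r-3$, gets from the total-degree bound a bidirected clique $K_k$ with $k\ge n/(8t)$ all of whose vertices have total degree at least $2(n-1)-4t$ (citing \cite[Lemma~4.1]{cambie2019extremal}), and then, instead of twins, uses a removability argument (\cite[Lemma~4.2]{cambie2019extremal}; compare Lemma~\ref{lem:removevertexwithoutchangingdistances} here): only $O(t^2)$ clique vertices can be essential for preserving some pairwise distance, and only $O(tn/k)$ can be essential for preserving the outradius, so once $k>32t^2+4t+1+tn/k$ some clique vertex is removable. A clique of size $\Omega(n/r)$ suffices for that count, whereas your pigeonhole over $4^{g(r)}$ external types genuinely needs the co-bounded clique you cannot yet produce. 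If you want to salvage your approach, you would need either to prove the $n-O_r(1)$ clique directly (hard, and close to the full theorem) or to replace the twin argument by a removability criterion that tolerates a clique of size $\Theta(n/r)$ --- which is what the paper does.
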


\begin{proof}
	Note that such a digraph $D$ satisfies $\lvert A \rvert > n(n-1-t)$, where $t:=t(r)=2r-3$ due to the example $D_{n,r,s}.$
	We know from \cite[Lemma~4.1]{cambie2019extremal} that $D$ contains a clique $K_k$ with $k \ge \frac{n}{8t}$, such that the (total) degree of all its vertices is at least $2(n-1)-4t.$
	Let $n_0:=n_0(r)= 8t(40t^2+4t+1)+1$.
	Since $k \ge \frac{n}{8t} >40t^2+4t+1 \ge 32t^2+4t+1+\frac{tn}{k}$ when $n\ge n_0$, the result follows from \cite[Lemma~4.2]{cambie2019extremal}.
\end{proof}

\begin{prop}\label{proplem_size}
	Let $D=(V,A)$ be a digraph of order $n$ and outradius $r$.
	Then for any vertex $v$, the total degree $\deg(v)\le 2(n-1)-(2r-3).$
	Equality can occur if and only if
	\begin{itemize}
		\item $\deg^-(v)=n-1$ and $\deg^+(v)=n-1-(2r-3)$,
		\item there exists two disjoint directed paths $vu_1u_2\ldots u_{r}$ and $v w_2 w_3 \ldots w_r$ in $D$ with $d(v,u_r)=r$ and $d(v,w_r)=r-1$	
	\end{itemize}
\end{prop}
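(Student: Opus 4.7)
The plan is to work in the complementary digraph $D^c$ and show $\deg_{D^c}(v) \geq 2r - 3$ for every vertex $v$, which rearranges to the desired bound $\deg(v) \leq 2(n-1) - (2r - 3)$. Equivalently, I exhibit at least $2r - 3$ non-arcs of $D$ incident to $v$, counting in-non-arcs $(u,v)$ and out-non-arcs $(v,u)$ separately. If $\ecc^+(v) = +\infty$, the bound follows easily since $v$ already fails to reach many vertices. Otherwise $\ecc^+(v) \geq r$ is finite: fix a shortest out-path $P : v \to v_1 \to v_2 \to \cdots \to v_r$ with $d(v, v_i) = i$ for each $i$. The arcs $v \to v_i$ for $i \in \{2,\ldots,r\}$ are absent, which already supplies $r-1$ non-arcs out of $v$.

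To collect the remaining $r - 2$ non-arcs, I would use $\ecc^+(v_1) \geq r$: pick a shortest out-path $Q : v_1 \to u_1 \to \cdots \to u_r$ with $d(v_1, u_r) = r$, and split on whether $v_1 v \in A(D)$. In \emph{Case (a)}, $v_1 v \in A(D)$, so $d(v_1,v)=1$ and the triangle inequality yields $d(v, u_j) \geq j - 1$. Hence each $u_j$ with $j\geq 3$ is out-bad; a careful distance-level comparison (any overlap $u_j = v_k$ would force an inconsistency between $d(v,\cdot)$ and $d(v_1,\cdot)$) shows $\{u_3,\ldots,u_r\}$ is disjoint from $\{v_2,\ldots,v_r\}$, furnishing $r-2$ fresh non-arcs. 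In \emph{Case (b)}, $v_1 v \notin A(D)$, so the pair $(v_1,v)$ itself is an in-non-arc; iterating the eccentricity argument via $\ecc^+(v_i)\geq r$ for $i=2,3,\ldots$ yields the remaining $r - 3$ non-arcs, each appearing as a new out-bad vertex along a fresh shortest path or as a further in-non-arc $(v_{i+1},v)$.

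For the equality characterization, tightness forces the total count to be exactly $2r-3$. Any in-non-arc enlarges the count, so Case (b) is incompatible with equality (it overproduces non-arcs); consequently $v_1 v \in A(D)$, and by applying the same reasoning to every out-neighbour of $v$, one gets $u\to v \in A(D)$ for every $u\neq v$, i.e.\ $\deg^-(v)=n-1$ and hence $\deg^+(v) = n - 2r + 2$. Within Case (a), tightness forces $d(v, u_j) = j - 1$ exactly for each $j$; the value $d(v, u_1) = 0$ then forces $u_1 = v$, because the alternatives $u_1 \in N_1(v)\setminus\{v_1\}$ or $u_1$ out-bad can each be shown (via the argument above) to yield strictly more than $2r - 3$ non-arcs. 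Thus $Q$ rewrites as $v_1 \to v \to u_2 \to \cdots \to u_r$, and $v \to u_2 \to \cdots \to u_r$ is a path of length $r-1$ from $v$ with $d(v, u_r) = r - 1$, disjoint from $P$ apart from $v$ itself. Together with $P$ (of length $r$, reaching $v_r$ at distance $r$), these are the two disjoint paths required by the proposition.

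The main obstacle is the delicate case analysis, in particular Case (b), where one must carefully track the iterations of the eccentricity argument along the path $P$ to ensure each step contributes a genuinely new non-arc. A further subtle point is ruling out the alternatives for $u_1$ in the equality sub-case of Case (a); this uses the tight distance constraints $d(v,u_j) = j - 1$ together with the observation that any configuration besides $u_1 = v$ introduces an extra out-bad vertex, violating the equality in $\deg(v)$.
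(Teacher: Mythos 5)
Your overall plan (count non-arcs at $v$ in the complement, using $\ecc^+$ of the first vertex $v_1$ on a shortest out-path from $v$, and split on whether $v_1\to v$ is an arc) follows the same skeleton as the paper's proof, but the execution of Case (a) contains a genuine gap. You route the second batch of non-arcs along a shortest path $Q\colon v_1\to u_1\to\cdots\to u_r$ starting at $v_1$ and assert that $\{u_3,\ldots,u_r\}$ is disjoint from $\{v_2,\ldots,v_r\}$ because an overlap would ``force an inconsistency between $d(v,\cdot)$ and $d(v_1,\cdot)$''. There is no such inconsistency: if $u_j=v_k$, then $d(v_1,u_j)=j$ together with $d(v_1,v_k)=k-1$ forces $k=j+1$, and every distance constraint you have is then satisfied, since $d(v,v_{j+1})=j+1\ge j-1$. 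Concretely, $Q$ may re-enter $P$: if some vertex $u_1$ satisfies $v_1\to u_1\to v_3$ and $z$ is an out-neighbour of $v_r$ with $d(v_1,z)=r$, then $Q=v_1\,u_1\,v_3\,v_4\cdots v_r\,z$ is a legitimate choice of shortest path of length $r$ from $v_1$, your ``fresh'' out-bad vertices $u_3,\ldots,u_{r-1}$ coincide with $v_4,\ldots,v_r$, and the union of out-non-arcs you have collected has size only $r$, far short of $2r-3$. The paper avoids exactly this: it picks $x$ with $d(v_1,x)\ge r$, deduces $d(v,x)\ge r-1$, and then considers the shortest path \emph{from $v$} to $x$; that path cannot meet any $v_j$, since passing through $v_j$ would give $d(v,x)\ge j+\bigl(r-(j-1)\bigr)=r+1>\ecc^+(v)$, so its internal vertices genuinely supply $r-2$ new out-non-arcs. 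This re-routing from $v$ instead of from $v_1$ is the step your argument is missing, and without it the inequality does not follow.

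Case (b) (``iterating the eccentricity argument \ldots yields the remaining $r-3$ non-arcs'') is too vague to verify and would run into the same re-entry problem; the paper instead lets $i$ be the least index with an arc $v_i\to v$, tallies the in-non-arcs $v_j\to v$ for $j<i$ and the out-non-arcs $v\to v_j$ for $j\ge 2$, and runs the $x$-argument from $v_i$. Your equality analysis inherits the gap: the disjointness of the two paths required in the statement is precisely the disjointness you have not established, and the forcing of $u_1=v$ rests on the flawed count.
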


\begin{proof}
	Let $\ecc^+(v)=r' \ge r$ and assume $vu_1u_2\ldots u_{r'}$ is a directed path in $D$ with $d(v,u_{r'}) =r'.$
	Let $i$ be the smallest index with $r'-r+1 \le i \le r'$ for which there is an arrow from $u_i$ to $v.$ Note that this one has to exist, or otherwise $\deg^-(v)\le(n-1)-r$ and $\deg^+(v)\le (n-1)-(r-1)$ and so $\deg(v) \le 2(n-1)-(2r-1).$\\
	We know there exists a vertex $x$ with $d(u_{r'-r+1},v)+d(v,x)\ge d(u_{r'-r+1},x) \ge r$ by the triangle inequality and $\ecc^+(u_{r'-r+1})\ge r$. 
	To have some graphical presentation, Figure~\ref{fig:partialconfiguration} presents a possible scenario.
	Next to that, we have $d(u_{r'-r+1},v)\le d(u_{r'-r+1},u_i)+1=i+r-r'.$
	These $2$ observations together imply that $d(v,x) \ge r-(i-r'+r)=r'-i.$\\
	The shortest path from $v$ to $x$ does not pass any $u_j$ for some $r' \ge j \ge r'-r+1$ by the choice of $d(v,u_{r'})$ being maximal and $d(u_{r'-r+1},x)\ge r >d(u_{r'-r+1},u_{r'}).$
	So we have at least $r'-i-1$ vertices $w$, on the shortest path from $v$ to $x$, for which there is no edge from $v$ to $w$.\\
	There is no edge from $v$ to any of the $r-1$ $u_j$ when $r'-r+2 \le j \le r'$ either.
	Furthermore $v$ is not in the outneighbourhood of any of the $i+r-r'-1$ $v_j$ with $r'-r+1 \le j < i$. 	If $r'>r$, then $d(v,u_{r'-r+1})>1$ as well and in this case $\deg(v) \le 2(n-1)-(2r-2)$ because we have found already $(r'-i-1)+(r-1)+(i+r-r'-1)+1=2r-2$ edges which are missing.
	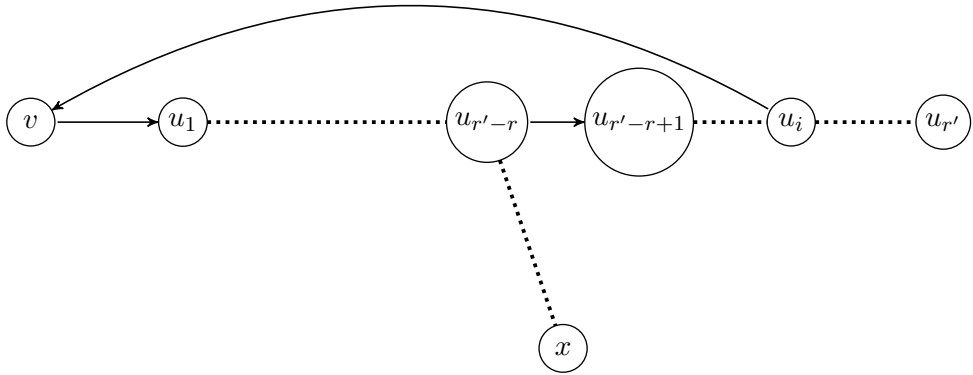
\begin{figure}[h]
		\centering

		\begin{tikzpicture}
		\definecolor{cv0}{rgb}{0.0,0.0,0.0}
		\definecolor{c}{rgb}{1.0,1.0,1.0}
		
		\Vertex[L=\hbox{$v$},x=0cm,y=0cm]{v0}	
		\Vertex[L=\hbox{$u_1$},x=2,y=0]{v1}
		
		\Vertex[L=\hbox{$u_{r'-r}$},x=6,y=0]{v5}
		\Vertex[L=\hbox{$u_{r'-r+1}$},x=8,y=0]{v2}
		\Vertex[L=\hbox{$u_i$},x=10,y=0]{v3}
		\Vertex[L=\hbox{$u_{r'}$},x=12,y=0]{v4}
		\Vertex[L=\hbox{$x$},x=7,y=-3]{x}
		
		\Edge[lw=0.1cm,style={post, right}](v0)(v1)
		\Edge[lw=0.1cm,style={post, right}](v5)(v2)
		\Edge[lw=0.05cm,style={dotted, right}](v2)(v3)
		\Edge[lw=0.05cm,style={dotted, right}](v3)(v4)
		\Edge[lw=0.05cm,style={dotted, right}](v1)(v5)
	
		\Edge[lw=0.1cm,style={post, bend right}](v3)(v0)
		\Edge[lw=0.05cm,style={dotted, right}](v5)(x)
		
		\end{tikzpicture}
		
		\caption{Partial Configuration with $r'>r+1$ and $i>r'-r+1$}
		\label{fig:partialconfiguration}
	\end{figure}
	
	So now we consider the case $r'=r$. If $i=r$, one has $\deg(v) \le 2(n-1)-(2r-2)$ again.\\
	If $1<i<r$, analogously as before by picking a vertex $x$ such that $d(u_i,x) \ge r$, again we conclude $\deg(v) \le 2(n-1)-(2r-2)$.
	For this, we note that $\vc{u_j v}\not\in A$ when $1 \le j \le i-1,$
	$\vc{vu_j}\not \in A$ for $2 \le j \le r$ and $\vc{vy} \not \in A$ for at least $r-i$ vertices $y$ belonging to the shortest path from $v$ to $x$ for which $d(v,y) \ge i$ (the latter implying that these are disjoint from the $u_j$). \\
	In the remaining case with $i=1$, equality can occur only if the digraph satisfies the conditions mentioned in this proposition.
	Here $w_r=x$ is the vertex such that $d(u_1,x)=r.$	
\end{proof}

\begin{thr}\label{mainVz_bicDi}
	For $r\ge 4$, there exists a value $n_1(r)$ such that for all $n > n_1(r)$ the following hold
	\begin{itemize}
		\item for any biconnected digraph $D$ of order $n$ with outradius $r$, we have $\lvert A(D) \rvert\le \lvert A(D_{n,r,1})\rvert =(n-(r-1))^2+(r-3)$, with equality if and only if $D \cong D_{n,r,s}$ for $1 \le s \le \frac{n-2r+2}{2}$.
	\end{itemize}
\end{thr}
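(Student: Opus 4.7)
The plan is to proceed by induction on $n$, exploiting the two ingredients already at hand: Lemma~\ref{lem2di_size} provides a vertex $v$ whose removal preserves both outradius and all pairwise distances, and Proposition~\ref{proplem_size} caps $\deg_D(v)$ at $2n-2r+1$. Choose $n_1(r)$ to exceed $n_0(r)$ (and large enough to absorb the finite base case for each fixed $r$), assume the theorem is known for order $n-1$, and let $D$ be extremal on $n$ vertices. Apply Lemma~\ref{lem2di_size} to obtain $v$; then $D' := D\backslash v$ is biconnected (distances being preserved), has order $n-1$, and still has outradius $r$. Combining the inductive bound on $\lvert A(D')\rvert$ with the degree bound on $v$,
$$\lvert A(D) \rvert \le \bigl((n-r)^2 + (r-3)\bigr) + \bigl(2n-2r+1\bigr) = (n-r+1)^2+(r-3) = \lvert A(D_{n,r,1})\rvert,$$
which is precisely the target.

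For the equality analysis, tightness in both bounds forces $D' \cong D_{n-1,r,s}$ for some valid $s$, and the equality clause of Proposition~\ref{proplem_size} forces $\deg^-(v) = n-1$, $\deg^+(v) = n-2r+2$, together with the existence of two disjoint outgoing paths $vu_1\cdots u_r$ and $vw_2\cdots w_r$ of lengths $r$ and $r-1$ respectively. The rigid structure of $D_{n-1,r,s}$ is then used to locate $v$. Since $v$ receives an arc from every other vertex, and a direct computation inside $D_{n-1,r,s}$ shows that a vertex with the exact in/out-degree pair $(n-2,n-2r+2)$ and admitting two outgoing paths of lengths $r$ and $r-1$ must sit inside one of the two bidirected cliques (the length-$r$ path running $\to w_1 \to w_2 \to \cdots \to w_r$ and the length-$(r-1)$ path $\to v_2 \to \cdots \to v_r$, or their mirror), reattaching $v$ enlarges exactly one of these cliques by one vertex. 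Hence $D \cong D_{n,r,s'}$ for some $s' \in \{s,s+1\}$, as required.

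The principal obstacle lies in this last step: one must exclude alternative attachments of $v$ that would satisfy the abstract degree and path conditions without yielding a genuine member of the $D_{n,r,\cdot}$ family. This is resolved by exploiting the rigidity of $D_{n-1,r,s}$: the ``middle'' vertices $v_2,\ldots,v_r,w_2,\ldots,w_r$ each have much smaller degrees and very specific distance profiles, so $v$ cannot be grafted onto any of them without violating either $\deg^+(v) = n-2r+2$ or the length constraints on the two paths. A short case analysis on the possible embeddings of the outgoing neighbourhood of $v$ within the layered structure of $D_{n-1,r,s}$ thus finishes the identification $D \cong D_{n,r,s'}$.
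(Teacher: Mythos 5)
The arithmetic of your inductive step is fine: $\bigl((n-r)^2+(r-3)\bigr)+\bigl(2(n-1)-(2r-3)\bigr)=(n-r+1)^2+(r-3)$, and Lemma~\ref{lem2di_size} does guarantee that $D\backslash v$ is biconnected with outradius $r$. The genuine gap is the base case. Your induction on $n$ must bottom out at some fixed order $N_0>n_1(r)$, and there you would need the \emph{full} theorem (bound plus the characterization $D'\cong D_{N_0,r,s}$, which your equality analysis then feeds on) proven by independent means. Enlarging $n_1(r)$ does not ``absorb'' this: it merely relocates the base case to a higher order where the statement is still exactly Conjecture~\ref{VZdi}, unproven for general $r$ at any specific finite order. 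You cannot run the induction down to small $n$ either, since Lemma~\ref{lem2di_size} fails below $n_0(r)$ and the conjecture is open there.

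The paper sidesteps this by not doing a clean one-step induction. It strips vertices all the way down to order $n_0$ and exploits the slack of $1$ per step whenever equality in Proposition~\ref{proplem_size} fails: if equality never occurs, then $\lvert A(D)\rvert\le n_0(n_0-1)+\sum_{i=n_0+1}^{n}\bigl(2(i-1)-(2r-2)\bigr)$, which is already below the target once $n>(2r-2)n_0$ --- only the trivial bound $n_0(n_0-1)$ is needed at the bottom, no extremal characterization. If equality does occur at some intermediate order $m$, the paper derives $D_m\cong D_{m,r,s}$ \emph{directly} from the forced configuration of Proposition~\ref{proplem_size} (the two disjoint paths span a set $B$ of $2r$ vertices; one bounds the arcs inside $B$ by $\lvert A(D_{2r,r,1})\rvert$ and the arcs between any outside vertex and $B$ by $2r+3$, with equality pinning down the structure), rather than from an inductive hypothesis. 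Your reattachment analysis of $v$ onto $D_{n-1,r,s}$ mirrors the paper's final re-adding step, but as written it rests on an assumption you have not established. To repair your argument you would need to replace the base case by something like the paper's accumulated-slack counting, at which point you have essentially reconstructed the paper's proof.
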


\begin{proof}
	Let $n_1:=n_1(r)=(2r-2)n_0.$
	Take a digraph $D$ of size $n > n_1$ which has the maximum size among all digraphs of order $n$ and outradius $r.$
	By Lemma~\ref{lem2di_size}, iteratively one can remove a vertex $v_n, v_{n-1}, \ldots, v_{n_0+1}$ from $D$ without changing outradius nor remaining distances.
	If for every of these $n-n_0$ removals, there is no equality in Proposition~\ref{proplem_size}, then as $n>(2r-2)n_0$ we have
	\begin{align*}
	\lvert A(D) \rvert \le n_0(n_0-1)+\sum_{i=n_0+1}^n 2(i-1)-(2r-2) &=n^2-(2r-1)n+(2r-2)n_0\\ &< \lvert A(D_{n,r,1})\rvert.
	\end{align*} 
	So now assume there was equality in some step in Lemma~\ref{lem2di_size}. Then we know a partial characterization of the digraph $D_m=(V_m,A_m)=D \backslash \{v_n,v_{n-1} \ldots v_{m+1}\}$ in that step.
	Let $B=\{u_1,u_2, \ldots, u_{r-1}, u_r, v, w_2, \ldots ,w_r\}$ and $R=V_m \backslash B$.
	Note that there is no directed edge from $u_i$ to $w_j$ if $1 \le i \le r-1$ and $2 \le j \le r$ as otherwise $\ecc^+(u_i)<r$.
	Similarly, there is no directed edge from $w_i$ to $u_j$ if $2 \le i \le r$ and $2 \le j \le r-1$ and at most one to $u_1$ or $u_r$.
	Since $\ecc^+(u_r) \ge r$, we need $d(u_r,u_{r-1})=r$ or $d(u_r,w_r)=r$.
	This implies that it is impossible there are directed edges from $u_r$ to both some $w_i$ and $u_j$ with $2 \le i \le r$ and $1 \le j \le r-1.$
	If $N^+(u_r) \cap B = \{v,w_2,w_3, \ldots, w_r\}$, then $\ecc^+(u_{r-1})<r$ which is a contradiction again.
	Knowing all of this, we already have an upper bound on the number of directed edges between vertices in $B$ which is equal to $\lvert A(D_{2r,r,1})\rvert.$ Equality is possible if and only if $D[B]$ is isomorphic to $D_{2r,r,1}.$ 
	For this, note that if all equalities need to hold, we cannot have a directed edge from some $w_i$ (with $2 \le i \le r$) to $u_r$, since then $d(w_i,u_{r-1})=2$ and hence $\ecc^+(w_i)<r$.\\		
	Next, let $z \in R$ be a remaining vertex.
	We now prove that there cannot be more than $2r+3$ directed edges between $z$ and $B.$
	Note that $N^+(z) \cap B \subset \{w_3,w_2,v,u_1,u_2\}$.
	If $w_3 \in N^+(z)$, then there is no $u_i$ with $1 \le i \le r-1$ in $N^-(z)$ and hence there are at most $r+6<2r+3$ directed edges between $z$ and $B$ in this case.
	Since $w_2$ and $u_2$ cannot be both in $N^+(z)$ as otherwise $\ecc^+(z)<r$, there are indeed at most $2r+3$ directed edges between $z$ and $B$. In those cases $N^+(z) \cap B$ equals $\{w_2,v,u_1\}$ or $\{v,u_1,u_2\}$.
	So indeed $ \lvert A_m \rvert \le \lvert A(D_{m,r,1}) \rvert$ with equality if and only if $D_m$ is isomorphic to $D_{m,r,s}$ for a certain $s$.\\
	For every vertex which is added, we know the total degree of that vertex is bounded again by Proposition~\ref{proplem_size} and equality occurs if and only if the digraph with this additional vertex is also of the form $D_{m',r,s'}$ for certain $m'$ and $s'$, from which the conclusion follows.
\end{proof}

\subsection{Maximum size given radius}\label{subsec:maxMgivenRad}

Theorem~\ref{thr:Vz_rad+3} tells us that the digraphs with order $n$ and outradius $3$ are exactly the digraphs conjectured to minimize the total distance in~\cite[Conj.~1.3]{cambie2019extremal}.
That same question of minimizing the total distance given order and radius instead of outradius may be harder, and we observe that the extremal digraphs are sometimes different than the ones maximizing the size given order and radius.
When the radius is at most $2$, the extremal digraphs are the same as in \cite[Prop.~5.1]{cambie2019extremal}. For this, it is sufficient to see that the extremal digraphs never will have diameter $3$ and so maximizing size and minimizing the total distance indeed correspond with each other.
Doing some computer simulation for $n = 6$, we observe that the extremal digraphs are the same except for $\rad =2.5$ and $\rad=3$.

The two digraphs at the left of Figure~\ref{fig:digraph_rad2.5_SmallWm} are the extremal graphs attaining the minimum Wiener index of $45$ among all digraphs of order $n$ and radius $\frac 52.$ Their size equals $18$.
The digraph at the right of Figure~\ref{fig:digraph_rad2.5_SmallWm} is the unique extremal digraph among all digraphs of order $n$ and radius $\frac 52$ attaining the maximum size, being $20.$
We note that this digraph is the one maximizing the size for diameter $5.$

\begin{figure}[h]
	\centering
	
	\begin{tikzpicture}
	\definecolor{cv0}{rgb}{0.0,0.0,0.0}
	\definecolor{cfv0}{rgb}{1.0,1.0,1.0}
	\definecolor{clv0}{rgb}{0.0,0.0,0.0}
	\definecolor{cv1}{rgb}{0.0,0.0,0.0}
	\definecolor{cfv1}{rgb}{1.0,1.0,1.0}
	\definecolor{clv1}{rgb}{0.0,0.0,0.0}
	\definecolor{cv2}{rgb}{0.0,0.0,0.0}
	\definecolor{cfv2}{rgb}{1.0,1.0,1.0}
	\definecolor{clv2}{rgb}{0.0,0.0,0.0}
	\definecolor{cv3}{rgb}{0.0,0.0,0.0}
	\definecolor{cfv3}{rgb}{1.0,1.0,1.0}
	\definecolor{clv3}{rgb}{0.0,0.0,0.0}
	\definecolor{cv4}{rgb}{0.0,0.0,0.0}
	\definecolor{cfv4}{rgb}{1.0,1.0,1.0}
	\definecolor{clv4}{rgb}{0.0,0.0,0.0}
	\definecolor{cv5}{rgb}{0.0,0.0,0.0}
	\definecolor{cfv5}{rgb}{1.0,1.0,1.0}
	\definecolor{clv5}{rgb}{0.0,0.0,0.0}
	\definecolor{cv0v1}{rgb}{0.0,0.0,0.0}
	\definecolor{cv0v3}{rgb}{0.0,0.0,0.0}
	\definecolor{cv1v0}{rgb}{0.0,0.0,0.0}
	\definecolor{cv1v2}{rgb}{0.0,0.0,0.0}
	\definecolor{cv2v0}{rgb}{0.0,0.0,0.0}
	\definecolor{cv2v1}{rgb}{0.0,0.0,0.0}
	\definecolor{cv2v3}{rgb}{0.0,0.0,0.0}
	\definecolor{cv2v5}{rgb}{0.0,0.0,0.0}
	\definecolor{cv3v0}{rgb}{0.0,0.0,0.0}
	\definecolor{cv3v4}{rgb}{0.0,0.0,0.0}
	\definecolor{cv4v0}{rgb}{0.0,0.0,0.0}
	\definecolor{cv4v1}{rgb}{0.0,0.0,0.0}
	\definecolor{cv4v3}{rgb}{0.0,0.0,0.0}
	\definecolor{cv4v5}{rgb}{0.0,0.0,0.0}
	\definecolor{cv5v1}{rgb}{0.0,0.0,0.0}
	\definecolor{cv5v2}{rgb}{0.0,0.0,0.0}
	\definecolor{cv5v3}{rgb}{0.0,0.0,0.0}
	\definecolor{cv5v4}{rgb}{0.0,0.0,0.0}
	\Vertex[style={minimum size=1.0cm,draw=cv0,fill=cfv0,text=clv0,shape=circle},LabelOut=false,L=\hbox{$0$},x=0cm,y=0.0cm]{v0}
	\Vertex[style={minimum size=1.0cm,draw=cv1,fill=cfv1,text=clv1,shape=circle},LabelOut=false,L=\hbox{$1$},x=0cm,y=1.75cm]{v1}
	\Vertex[style={minimum size=1.0cm,draw=cv2,fill=cfv2,text=clv2,shape=circle},LabelOut=false,L=\hbox{$2$},x=1.25cm,y=3cm]{v2}
	\Vertex[style={minimum size=1.0cm,draw=cv3,fill=cfv3,text=clv3,shape=circle},LabelOut=false,L=\hbox{$3$},x=3cm,y=1.25cm]{v3}
	\Vertex[style={minimum size=1.0cm,draw=cv4,fill=cfv4,text=clv4,shape=circle},LabelOut=false,L=\hbox{$4$},x=1.75cm,y=0cm]{v4}
	\Vertex[style={minimum size=1.0cm,draw=cv5,fill=cfv5,text=clv5,shape=circle},LabelOut=false,L=\hbox{$5$},x=3cm,y=3cm]{v5}
	\Edge[lw=0.1cm,style={post, color=cv0v1,},](v0)(v1)
	\Edge[lw=0.1cm,style={post, color=cv0v3,},](v0)(v3)
	\Edge[lw=0.1cm,style={post, color=cv1v0,},](v1)(v0)
	\Edge[lw=0.1cm,style={post, color=cv1v2,},](v1)(v2)
	\Edge[lw=0.1cm,style={post, color=cv2v0,},](v2)(v0)
	\Edge[lw=0.1cm,style={post, color=cv2v1,},](v2)(v1)
	\Edge[lw=0.1cm,style={post, color=cv2v3,},](v2)(v3)
	\Edge[lw=0.1cm,style={post, color=cv2v5,},](v2)(v5)
	\Edge[lw=0.1cm,style={post, color=cv3v0,},](v3)(v0)
	\Edge[lw=0.1cm,style={post, color=cv3v4,},](v3)(v4)
	\Edge[lw=0.1cm,style={post, color=cv4v0,},](v4)(v0)
	\Edge[lw=0.1cm,style={post, color=cv4v1,},](v4)(v1)
	\Edge[lw=0.1cm,style={post, color=cv4v3,},](v4)(v3)
	\Edge[lw=0.1cm,style={post, color=cv4v5,},](v4)(v5)
	\Edge[lw=0.1cm,style={post, color=cv5v1,},](v5)(v1)
	\Edge[lw=0.1cm,style={post, color=cv5v2,},](v5)(v2)
	\Edge[lw=0.1cm,style={post, color=cv5v3,},](v5)(v3)
	\Edge[lw=0.1cm,style={post, color=cv5v4,},](v5)(v4)
	
	\end{tikzpicture}
	\quad
	\begin{tikzpicture}
	\definecolor{cv0}{rgb}{0.0,0.0,0.0}
	\definecolor{cfv0}{rgb}{1.0,1.0,1.0}
	\definecolor{clv0}{rgb}{0.0,0.0,0.0}
	\definecolor{cv1}{rgb}{0.0,0.0,0.0}
	\definecolor{cfv1}{rgb}{1.0,1.0,1.0}
	\definecolor{clv1}{rgb}{0.0,0.0,0.0}
	\definecolor{cv2}{rgb}{0.0,0.0,0.0}
	\definecolor{cfv2}{rgb}{1.0,1.0,1.0}
	\definecolor{clv2}{rgb}{0.0,0.0,0.0}
	\definecolor{cv3}{rgb}{0.0,0.0,0.0}
	\definecolor{cfv3}{rgb}{1.0,1.0,1.0}
	\definecolor{clv3}{rgb}{0.0,0.0,0.0}
	\definecolor{cv4}{rgb}{0.0,0.0,0.0}
	\definecolor{cfv4}{rgb}{1.0,1.0,1.0}
	\definecolor{clv4}{rgb}{0.0,0.0,0.0}
	\definecolor{cv5}{rgb}{0.0,0.0,0.0}
	\definecolor{cfv5}{rgb}{1.0,1.0,1.0}
	\definecolor{clv5}{rgb}{0.0,0.0,0.0}
	\definecolor{cv0v1}{rgb}{0.0,0.0,0.0}
	\definecolor{cv0v3}{rgb}{0.0,0.0,0.0}
	\definecolor{cv0v4}{rgb}{0.0,0.0,0.0}
	\definecolor{cv0v5}{rgb}{0.0,0.0,0.0}
	\definecolor{cv1v2}{rgb}{0.0,0.0,0.0}
	\definecolor{cv1v3}{rgb}{0.0,0.0,0.0}
	\definecolor{cv1v4}{rgb}{0.0,0.0,0.0}
	\definecolor{cv1v5}{rgb}{0.0,0.0,0.0}
	\definecolor{cv2v0}{rgb}{0.0,0.0,0.0}
	\definecolor{cv2v3}{rgb}{0.0,0.0,0.0}
	\definecolor{cv2v4}{rgb}{0.0,0.0,0.0}
	\definecolor{cv2v5}{rgb}{0.0,0.0,0.0}
	\definecolor{cv3v0}{rgb}{0.0,0.0,0.0}
	\definecolor{cv3v4}{rgb}{0.0,0.0,0.0}
	\definecolor{cv4v1}{rgb}{0.0,0.0,0.0}
	\definecolor{cv4v5}{rgb}{0.0,0.0,0.0}
	\definecolor{cv5v2}{rgb}{0.0,0.0,0.0}
	\definecolor{cv5v3}{rgb}{0.0,0.0,0.0}
	\Vertex[style={minimum size=1.0cm,draw=cv0,fill=cfv0,text=clv0,shape=circle},LabelOut=false,L=\hbox{$0$},x=0cm,y=0.0cm]{v0}
	\Vertex[style={minimum size=1.0cm,draw=cv1,fill=cfv1,text=clv1,shape=circle},LabelOut=false,L=\hbox{$1$},x=0cm,y=1.75cm]{v1}
	\Vertex[style={minimum size=1.0cm,draw=cv2,fill=cfv2,text=clv2,shape=circle},LabelOut=false,L=\hbox{$2$},x=1.25cm,y=3cm]{v2}
	\Vertex[style={minimum size=1.0cm,draw=cv3,fill=cfv3,text=clv3,shape=circle},LabelOut=false,L=\hbox{$3$},x=3cm,y=1.25cm]{v3}
	\Vertex[style={minimum size=1.0cm,draw=cv4,fill=cfv4,text=clv4,shape=circle},LabelOut=false,L=\hbox{$4$},x=1.75cm,y=0cm]{v4}
	\Vertex[style={minimum size=1.0cm,draw=cv5,fill=cfv5,text=clv5,shape=circle},LabelOut=false,L=\hbox{$5$},x=3cm,y=3cm]{v5}
	\Edge[lw=0.1cm,style={post,color=cv0v1,},](v0)(v1)
	\Edge[lw=0.1cm,style={post,color=cv0v3,},](v0)(v3)
	\Edge[lw=0.1cm,style={post,color=cv0v4,},](v0)(v4)
	\Edge[lw=0.1cm,style={post,color=cv0v5,},](v0)(v5)
	\Edge[lw=0.1cm,style={post,color=cv1v2,},](v1)(v2)
	\Edge[lw=0.1cm,style={post,color=cv1v3,},](v1)(v3)
	\Edge[lw=0.1cm,style={post,color=cv1v4,},](v1)(v4)
	\Edge[lw=0.1cm,style={post,color=cv1v5,},](v1)(v5)
	\Edge[lw=0.1cm,style={post,color=cv2v0,},](v2)(v0)
	\Edge[lw=0.1cm,style={post,color=cv2v3,},](v2)(v3)
	\Edge[lw=0.1cm,style={post,color=cv2v4,},](v2)(v4)
	\Edge[lw=0.1cm,style={post,color=cv2v5,},](v2)(v5)
	\Edge[lw=0.1cm,style={post,color=cv3v0,},](v3)(v0)
	\Edge[lw=0.1cm,style={post,color=cv3v4,},](v3)(v4)
	\Edge[lw=0.1cm,style={post,color=cv4v1,},](v4)(v1)
	\Edge[lw=0.1cm,style={post,color=cv4v5,},](v4)(v5)
	\Edge[lw=0.1cm,style={post,color=cv5v2,},](v5)(v2)
	\Edge[lw=0.1cm,style={post,color=cv5v3,},](v5)(v3)
	\end{tikzpicture}
	\quad
	\begin{tikzpicture}
	\definecolor{cv0}{rgb}{0.0,0.0,0.0}
	\definecolor{cfv0}{rgb}{1.0,1.0,1.0}
	\definecolor{clv0}{rgb}{0.0,0.0,0.0}
	\definecolor{cv1}{rgb}{0.0,0.0,0.0}
	\definecolor{cfv1}{rgb}{1.0,1.0,1.0}
	\definecolor{clv1}{rgb}{0.0,0.0,0.0}
	\definecolor{cv2}{rgb}{0.0,0.0,0.0}
	\definecolor{cfv2}{rgb}{1.0,1.0,1.0}
	\definecolor{clv2}{rgb}{0.0,0.0,0.0}
	\definecolor{cv3}{rgb}{0.0,0.0,0.0}
	\definecolor{cfv3}{rgb}{1.0,1.0,1.0}
	\definecolor{clv3}{rgb}{0.0,0.0,0.0}
	\definecolor{cv4}{rgb}{0.0,0.0,0.0}
	\definecolor{cfv4}{rgb}{1.0,1.0,1.0}
	\definecolor{clv4}{rgb}{0.0,0.0,0.0}
	\definecolor{cv5}{rgb}{0.0,0.0,0.0}
	\definecolor{cfv5}{rgb}{1.0,1.0,1.0}
	\definecolor{clv5}{rgb}{0.0,0.0,0.0}
	\definecolor{cv0v4}{rgb}{0.0,0.0,0.0}
	\definecolor{cv1v0}{rgb}{0.0,0.0,0.0}
	\definecolor{cv1v2}{rgb}{0.0,0.0,0.0}
	\definecolor{cv1v3}{rgb}{0.0,0.0,0.0}
	\definecolor{cv1v4}{rgb}{0.0,0.0,0.0}
	\definecolor{cv1v5}{rgb}{0.0,0.0,0.0}
	\definecolor{cv2v0}{rgb}{0.0,0.0,0.0}
	\definecolor{cv2v3}{rgb}{0.0,0.0,0.0}
	\definecolor{cv2v4}{rgb}{0.0,0.0,0.0}
	\definecolor{cv2v5}{rgb}{0.0,0.0,0.0}
	\definecolor{cv3v0}{rgb}{0.0,0.0,0.0}
	\definecolor{cv3v1}{rgb}{0.0,0.0,0.0}
	\definecolor{cv3v2}{rgb}{0.0,0.0,0.0}
	\definecolor{cv3v4}{rgb}{0.0,0.0,0.0}
	\definecolor{cv3v5}{rgb}{0.0,0.0,0.0}
	\definecolor{cv4v0}{rgb}{0.0,0.0,0.0}
	\definecolor{cv4v5}{rgb}{0.0,0.0,0.0}
	\definecolor{cv5v0}{rgb}{0.0,0.0,0.0}
	\definecolor{cv5v2}{rgb}{0.0,0.0,0.0}
	\definecolor{cv5v4}{rgb}{0.0,0.0,0.0}
	\Vertex[style={minimum size=1.0cm,draw=cv0,fill=cfv0,text=clv0,shape=circle},LabelOut=false,L=\hbox{$0$},x=0cm,y=0.0cm]{v0}
	\Vertex[style={minimum size=1.0cm,draw=cv1,fill=cfv1,text=clv1,shape=circle},LabelOut=false,L=\hbox{$1$},x=0cm,y=1.75cm]{v4}
	\Vertex[style={minimum size=1.0cm,draw=cv2,fill=cfv2,text=clv2,shape=circle},LabelOut=false,L=\hbox{$2$},x=1.25cm,y=3cm]{v5}
	\Vertex[style={minimum size=1.0cm,draw=cv3,fill=cfv3,text=clv3,shape=circle},LabelOut=false,L=\hbox{$4$},x=3cm,y=1.25cm]{v3}
	\Vertex[style={minimum size=1.0cm,draw=cv4,fill=cfv4,text=clv4,shape=circle},LabelOut=false,L=\hbox{$5$},x=1.75cm,y=0cm]{v1}
	\Vertex[style={minimum size=1.0cm,draw=cv5,fill=cfv5,text=clv5,shape=circle},LabelOut=false,L=\hbox{$3$},x=3cm,y=3cm]{v2}
	\Edge[lw=0.1cm,style={post,color=cv0v4,},](v0)(v4)
	\Edge[lw=0.1cm,style={post,color=cv1v0,},](v1)(v0)
	\Edge[lw=0.1cm,style={post,color=cv1v2,},](v1)(v2)
	\Edge[lw=0.1cm,style={post,color=cv1v3,},](v1)(v3)
	\Edge[lw=0.1cm,style={post,color=cv1v4,},](v1)(v4)
	\Edge[lw=0.1cm,style={post,color=cv1v5,},](v1)(v5)
	\Edge[lw=0.1cm,style={post,color=cv2v0,},](v2)(v0)
	\Edge[lw=0.1cm,style={post,color=cv2v3,},](v2)(v3)
	\Edge[lw=0.1cm,style={post,color=cv2v4,},](v2)(v4)
	\Edge[lw=0.1cm,style={post,color=cv2v5,},](v2)(v5)
	\Edge[lw=0.1cm,style={post,color=cv3v0,},](v3)(v0)
	\Edge[lw=0.1cm,style={post,color=cv3v1,},](v3)(v1)
	\Edge[lw=0.1cm,style={post,color=cv3v2,},](v3)(v2)
	\Edge[lw=0.1cm,style={post,color=cv3v4,},](v3)(v4)
	\Edge[lw=0.1cm,style={post,color=cv3v5,},](v3)(v5)
	\Edge[lw=0.1cm,style={post,color=cv4v0,},](v4)(v0)
	\Edge[lw=0.1cm,style={post,color=cv4v5,},](v4)(v5)
	\Edge[lw=0.1cm,style={post,color=cv5v0,},](v5)(v0)
	\Edge[lw=0.1cm,style={post,color=cv5v2,},](v5)(v2)
	\Edge[lw=0.1cm,style={post,color=cv5v4,},](v5)(v4)
	\end{tikzpicture}

	\caption{Digraphs with $\rad=2.5$ and minimal Wiener index (2 extremal digraphs), resp. maximal size for $n= 6$}
	\label{fig:digraph_rad2.5_SmallWm}
\end{figure}
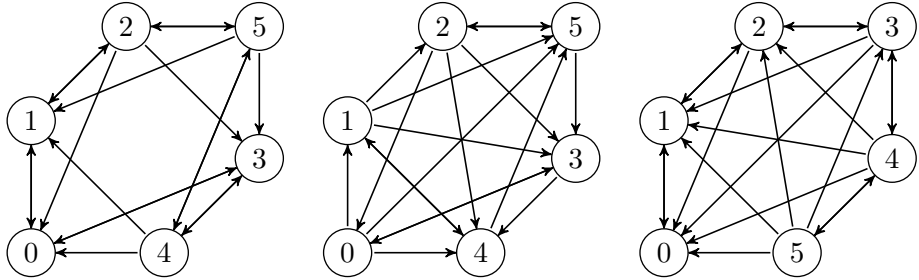

\begin{figure}[h]
	\centering

	\begin{tikzpicture}
	\definecolor{cv0}{rgb}{0.0,0.0,0.0}
	\definecolor{cfv0}{rgb}{1.0,1.0,1.0}
	\definecolor{clv0}{rgb}{0.0,0.0,0.0}
	\definecolor{cv1}{rgb}{0.0,0.0,0.0}
	\definecolor{cfv1}{rgb}{1.0,1.0,1.0}
	\definecolor{clv1}{rgb}{0.0,0.0,0.0}
	\definecolor{cv2}{rgb}{0.0,0.0,0.0}
	\definecolor{cfv2}{rgb}{1.0,1.0,1.0}
	\definecolor{clv2}{rgb}{0.0,0.0,0.0}
	\definecolor{cv3}{rgb}{0.0,0.0,0.0}
	\definecolor{cfv3}{rgb}{1.0,1.0,1.0}
	\definecolor{clv3}{rgb}{0.0,0.0,0.0}
	\definecolor{cv4}{rgb}{0.0,0.0,0.0}
	\definecolor{cfv4}{rgb}{1.0,1.0,1.0}
	\definecolor{clv4}{rgb}{0.0,0.0,0.0}
	\definecolor{cv5}{rgb}{0.0,0.0,0.0}
	\definecolor{cfv5}{rgb}{1.0,1.0,1.0}
	\definecolor{clv5}{rgb}{0.0,0.0,0.0}
	\definecolor{cv0v1}{rgb}{0.0,0.0,0.0}
	\definecolor{cv0v2}{rgb}{0.0,0.0,0.0}
	\definecolor{cv0v3}{rgb}{0.0,0.0,0.0}
	\definecolor{cv0v4}{rgb}{0.0,0.0,0.0}
	\definecolor{cv1v3}{rgb}{0.0,0.0,0.0}
	\definecolor{cv2v0}{rgb}{0.0,0.0,0.0}
	\definecolor{cv2v1}{rgb}{0.0,0.0,0.0}
	\definecolor{cv2v4}{rgb}{0.0,0.0,0.0}
	\definecolor{cv3v1}{rgb}{0.0,0.0,0.0}
	\definecolor{cv3v4}{rgb}{0.0,0.0,0.0}
	\definecolor{cv3v5}{rgb}{0.0,0.0,0.0}
	\definecolor{cv4v2}{rgb}{0.0,0.0,0.0}
	\definecolor{cv5v1}{rgb}{0.0,0.0,0.0}
	\definecolor{cv5v2}{rgb}{0.0,0.0,0.0}
	\definecolor{cv5v3}{rgb}{0.0,0.0,0.0}
	\definecolor{cv5v4}{rgb}{0.0,0.0,0.0}
	\Vertex[style={minimum size=1.0cm,draw=cv0,fill=cfv0,text=clv0,shape=circle},LabelOut=false,L=\hbox{$0$},x=0cm,y=0.0cm]{v0}
	\Vertex[style={minimum size=1.0cm,draw=cv1,fill=cfv1,text=clv1,shape=circle},LabelOut=false,L=\hbox{$1$},x=4.0cm,y=0cm]{v1}
	\Vertex[style={minimum size=1.0cm,draw=cv2,fill=cfv2,text=clv2,shape=circle},LabelOut=false,L=\hbox{$2$},x=2cm,y=0.6cm]{v2}
	\Vertex[style={minimum size=1.0cm,draw=cv3,fill=cfv3,text=clv3,shape=circle},LabelOut=false,L=\hbox{$4$},x=2cm,y=1.9cm]{v3}
	\Vertex[style={minimum size=1.0cm,draw=cv4,fill=cfv4,text=clv4,shape=circle},LabelOut=false,L=\hbox{$3$},x=0.0cm,y=2.5cm]{v4}
	\Vertex[style={minimum size=1.0cm,draw=cv5,fill=cfv5,text=clv5,shape=circle},LabelOut=false,L=\hbox{$5$},x=4cm,y=2.5cm]{v5}
	\Edge[lw=0.1cm,style={post, color=cv0v1,},](v0)(v1)
	\Edge[lw=0.1cm,style={post, color=cv0v2,},](v0)(v2)
	\Edge[lw=0.1cm,style={post, color=cv0v3,},](v0)(v3)
	\Edge[lw=0.1cm,style={post, color=cv0v4,},](v0)(v4)
	\Edge[lw=0.1cm,style={post, color=cv1v3,},](v1)(v3)
	\Edge[lw=0.1cm,style={post, color=cv2v0,},](v2)(v0)
	\Edge[lw=0.1cm,style={post, color=cv2v1,},](v2)(v1)
	\Edge[lw=0.1cm,style={post, color=cv2v4,},](v2)(v4)
	\Edge[lw=0.1cm,style={post, color=cv3v1,},](v3)(v1)
	\Edge[lw=0.1cm,style={post, color=cv3v4,},](v3)(v4)
	\Edge[lw=0.1cm,style={post, color=cv3v5,},](v3)(v5)
	\Edge[lw=0.1cm,style={post, color=cv4v2,},](v4)(v2)
	\Edge[lw=0.1cm,style={post, color=cv5v1,},](v5)(v1)
	\Edge[lw=0.1cm,style={post, color=cv5v2,},](v5)(v2)
	\Edge[lw=0.1cm,style={post, color=cv5v3,},](v5)(v3)
	\Edge[lw=0.1cm,style={post, color=cv5v4,},](v5)(v4)
	\end{tikzpicture}
	\quad 
	\begin{tikzpicture}
	\definecolor{cv0}{rgb}{0.0,0.0,0.0}
	\definecolor{cfv0}{rgb}{1.0,1.0,1.0}
	\definecolor{clv0}{rgb}{0.0,0.0,0.0}
	\definecolor{cv1}{rgb}{0.0,0.0,0.0}
	\definecolor{cfv1}{rgb}{1.0,1.0,1.0}
	\definecolor{clv1}{rgb}{0.0,0.0,0.0}
	\definecolor{cv2}{rgb}{0.0,0.0,0.0}
	\definecolor{cfv2}{rgb}{1.0,1.0,1.0}
	\definecolor{clv2}{rgb}{0.0,0.0,0.0}
	\definecolor{cv3}{rgb}{0.0,0.0,0.0}
	\definecolor{cfv3}{rgb}{1.0,1.0,1.0}
	\definecolor{clv3}{rgb}{0.0,0.0,0.0}
	\definecolor{cv4}{rgb}{0.0,0.0,0.0}
	\definecolor{cfv4}{rgb}{1.0,1.0,1.0}
	\definecolor{clv4}{rgb}{0.0,0.0,0.0}
	\definecolor{cv5}{rgb}{0.0,0.0,0.0}
	\definecolor{cfv5}{rgb}{1.0,1.0,1.0}
	\definecolor{clv5}{rgb}{0.0,0.0,0.0}
	\definecolor{cv0v2}{rgb}{0.0,0.0,0.0}
	\definecolor{cv0v5}{rgb}{0.0,0.0,0.0}
	\definecolor{cv1v0}{rgb}{0.0,0.0,0.0}
	\definecolor{cv1v4}{rgb}{0.0,0.0,0.0}
	\definecolor{cv1v5}{rgb}{0.0,0.0,0.0}
	\definecolor{cv2v0}{rgb}{0.0,0.0,0.0}
	\definecolor{cv2v3}{rgb}{0.0,0.0,0.0}
	\definecolor{cv2v4}{rgb}{0.0,0.0,0.0}
	\definecolor{cv2v5}{rgb}{0.0,0.0,0.0}
	\definecolor{cv3v0}{rgb}{0.0,0.0,0.0}
	\definecolor{cv3v1}{rgb}{0.0,0.0,0.0}
	\definecolor{cv3v2}{rgb}{0.0,0.0,0.0}
	\definecolor{cv3v4}{rgb}{0.0,0.0,0.0}
	\definecolor{cv3v5}{rgb}{0.0,0.0,0.0}
	\definecolor{cv4v1}{rgb}{0.0,0.0,0.0}
	\definecolor{cv4v5}{rgb}{0.0,0.0,0.0}
	\definecolor{cv5v4}{rgb}{0.0,0.0,0.0}
	\Vertex[style={minimum size=1.0cm,draw=cv0,fill=cfv0,text=clv0,shape=circle},LabelOut=false,L=\hbox{$0$},x=0cm,y=0.0cm]{v0}
	\Vertex[style={minimum size=1.0cm,draw=cv1,fill=cfv1,text=clv1,shape=circle},LabelOut=false,L=\hbox{$1$},x=4.0cm,y=0cm]{v1}
	\Vertex[style={minimum size=1.0cm,draw=cv2,fill=cfv2,text=clv2,shape=circle},LabelOut=false,L=\hbox{$2$},x=2cm,y=0.6cm]{v3}
	\Vertex[style={minimum size=1.0cm,draw=cv3,fill=cfv3,text=clv3,shape=circle},LabelOut=false,L=\hbox{$4$},x=2cm,y=1.9cm]{v5}
	\Vertex[style={minimum size=1.0cm,draw=cv4,fill=cfv4,text=clv4,shape=circle},LabelOut=false,L=\hbox{$3$},x=0.0cm,y=2.5cm]{v2}
	\Vertex[style={minimum size=1.0cm,draw=cv5,fill=cfv5,text=clv5,shape=circle},LabelOut=false,L=\hbox{$5$},x=4cm,y=2.5cm]{v4}
	\Edge[lw=0.1cm,style={post,color=cv0v2,},](v0)(v2)
	\Edge[lw=0.1cm,style={post,color=cv0v5,},](v0)(v5)
	\Edge[lw=0.1cm,style={post,color=cv1v0,},](v1)(v0)
	\Edge[lw=0.1cm,style={post,color=cv1v4,},](v1)(v4)
	\Edge[lw=0.1cm,style={post,color=cv1v5,},](v1)(v5)
	\Edge[lw=0.1cm,style={post,color=cv2v0,},](v2)(v0)
	\Edge[lw=0.1cm,style={post,color=cv2v3,},](v2)(v3)
	\Edge[lw=0.1cm,style={post,color=cv2v4,},](v2)(v4)
	\Edge[lw=0.1cm,style={post,color=cv2v5,},](v2)(v5)
	\Edge[lw=0.1cm,style={post,color=cv3v0,},](v3)(v0)
	\Edge[lw=0.1cm,style={post,color=cv3v1,},](v3)(v1)
	\Edge[lw=0.1cm,style={post,color=cv3v2,},](v3)(v2)
	\Edge[lw=0.1cm,style={post,color=cv3v4,},](v3)(v4)
	\Edge[lw=0.1cm,style={post,color=cv3v5,},](v3)(v5)
	\Edge[lw=0.1cm,style={post,color=cv4v1,},](v4)(v1)
	\Edge[lw=0.1cm,style={post,color=cv4v5,},](v4)(v5)
	\Edge[lw=0.1cm,style={post,color=cv5v4,},](v5)(v4)
	\end{tikzpicture}

	\caption{Digraphs with $\rad=3$ and minimal Wiener index, resp. maximal size for $n= 6$}
	\label{fig:digraph_rad_smallWm}
\end{figure}
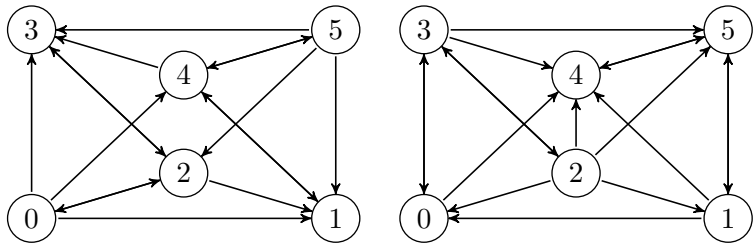

For a radius of $3$, the left digraph in Figure~\ref{fig:digraph_rad_smallWm} has size $m=16$ and total distance $W=52$, while the digraph at the right has size $m=17$ and total distance $W=54.$ 

So the digraphs with maximum size and those with minimum total distance given the conditions on order and radius, are pretty different in these cases.


For $n=5$ and $rad=2.5$ there are $7$ different digraphs having the maximum size of $11$.
So for small values of $n$, there is sometimes no clear structure.
On the other hand, the structure of the extremal digraphs for $n \ge 2r+1$ is conjectured below. This is the analogue of Conjecture~\ref{VZdi} for radius instead of outradius.

\begin{conj}\label{conj:VZdi_rad}
	Let $r \ge \frac 52.$
	For $n \ge 2r+1$, the biconnected digraphs with order $n$ and $\rad=r$ that maximizes the size are formed by taking two blow-ups at consecutive, non-end vertices of the digraph $\overline{\Gamma}_{2r+1}.$
	Furthermore, the extremal digraphs are exactly those of the form $\overline{\Gamma}_{n,2r,i,s}$ where $1 \le i \le 2r-2$ and $1\le s \le n-2r.$ 
\end{conj}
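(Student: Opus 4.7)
The plan is to mirror the strategy used in the proof of Theorem~\ref{mainVz_bicDi} for the outradius case, adapted here for the radius. The starting point is a degree bound analogous to Proposition~\ref{proplem_size}: for every vertex $v$ of a biconnected digraph $D$ with $\rad(D)=r$, the total degree satisfies $\deg(v)\le 2n-2r$. The argument is straightforward --- since $\rad(D)=r$ forces $\ecc^+(v)+\ecc^-(v)\ge 2r$ for every $v$, a shortest out-path from $v$ of length $\ecc^+(v)$ exhibits $\ecc^+(v)-1$ forbidden out-neighbours, and a shortest in-path to $v$ of length $\ecc^-(v)$ exhibits $\ecc^-(v)-1$ forbidden in-neighbours, whence $\deg(v)\le 2n-\ecc^+(v)-\ecc^-(v)\le 2n-2r$. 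Tracing through equality in this chain should force $\ecc^+(v)+\ecc^-(v)=2r$ and yield a rigid local structure: the two shortest paths are essentially unique and the complements of $N^+(v)$ and $N^-(v)$ match exactly the corresponding neighbourhoods of a blow-up of $\overline{\Gamma}_{2r+1}$.

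Next I would establish the analogue of Lemma~\ref{lem2di_size}: for $n$ sufficiently large in terms of $r$, any extremal digraph contains a vertex whose deletion preserves both the radius and all pairwise distances among the remaining vertices. The recipe the paper already uses twice carries over: the assumed lower bound on $|A|$ forces the average degree to be close to $2n-2r$, so by \cite[Lemma~4.1]{cambie2019extremal} there is a large bidirected clique whose vertices all have high degree, and \cite[Lemma~4.2]{cambie2019extremal} then produces a deletable vertex from within it. Iterating this reduction peels the digraph down to a bounded base case; at each step either the degree bound is tight --- in which case the equality analysis above forces the reduced digraph to be of the form $\overline{\Gamma}_{m,2r,i,s}$ --- or at least one arc is lost, and for $n$ large the accumulated loss pushes the total arc count below $|A(\overline{\Gamma}_{n,2r,i,s})|$. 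The base case $n=2r+1$ can then be verified directly, and a final reconstruction step builds $\overline{\Gamma}_{n,2r,i,s}$ back up by re-adding vertices one at a time.

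The hardest part will be the equality analysis of the degree bound. In Proposition~\ref{proplem_size} only an out-directed path needs to be tracked, whereas here the in- and out-eccentricity conditions interact in intricate ways: the forbidden-out and forbidden-in sets can overlap, several central vertices of $\overline{\Gamma}_{2r+1}$ share the same eccentricity profile without being bidirectionally adjacent, and the extremal family carries two free parameters $i$ and $s$ rather than one. Promoting the local statement ``$\deg(v)=2n-2r$'' to the global identification ``$D\cong\overline{\Gamma}_{n,2r,i,s}$'' will thus require substantially more case analysis than in the outradius setting. Additional care is also needed around the sporadic extremal digraphs for small $n$ exhibited in Figures~\ref{fig:digraph_rad2.5_SmallWm} and~\ref{fig:digraph_rad_smallWm}, which show that the hypothesis $n\ge 2r+1$ (or something slightly stronger) is genuinely necessary.
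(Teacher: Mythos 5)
First, a point of framing: the statement you are proving is stated in the paper as Conjecture~\ref{conj:VZdi_rad} and is left \emph{open} there; the paper only establishes the base case $n=2r+1$ (the proposition immediately following the conjecture), and even that requires a delicate argument showing that the naive degree-sum bound $n(n-r)$ cannot actually be attained, because the terminal vertex of a diametral path is forced to have in-degree $1$. So there is no proof in the paper for you to match, and anything you write here would be new mathematics that must stand entirely on its own.

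Judged on that standard, your proposal has a genuine gap: it is a strategy outline that defers exactly the step that makes the problem hard. The first-moment degree bound $\deg(v)\le 2n-\ecc^+(v)-\ecc^-(v)\le 2n-2r$ is correct and easy, and the removable-vertex machinery from Lemma~\ref{lem2di_size} plausibly transfers (the extremal digraphs $\overline{\Gamma}_{n,2r,i,s}$ do contain a large bidirected clique all of whose vertices attain the bound $2n-2r$, so the telescoping arc count works out). But the claim that equality in the degree bound ``should force'' the complements of $N^+(v)$ and $N^-(v)$ to match a blow-up of $\overline{\Gamma}_{2r+1}$ is not justified and is not true as a purely local statement: equality at a single vertex only says that $\ecc^+(v)+\ecc^-(v)=2r$ and that the missing out-arcs (resp.\ in-arcs) sit one per level on some shortest out-path (resp.\ in-path). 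You still have to prove that these two paths are disjoint apart from $v$, combine the local pictures at many vertices, rule out competing configurations (the computer-found examples for $n=6$, $\rad\in\{2.5,3\}$ in Figures~\ref{fig:digraph_rad2.5_SmallWm} and~\ref{fig:digraph_rad_smallWm} show that such competitors genuinely exist for small $n$), and then carry out the analogue of the case analysis in Theorem~\ref{mainVz_bicDi} --- bounding arcs inside the $(2r+1)$-vertex skeleton $B$, bounding arcs between $B$ and the rest, and identifying when all bounds are simultaneously tight --- now with the extra complication that the radius condition couples in- and out-eccentricities and the extremal family has two free parameters $i$ and $s$. You acknowledge all of this yourself (``will require substantially more case analysis''), which is an honest assessment but means the proposal is a research plan, not a proof; the conjecture should be regarded as still open.
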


We prove this conjecture for the case $n=2r+1.$

\begin{prop}
	Let $D$ be a graph with order $n=2r+1$ and radius $r \ge \frac 52$. Then its size $\lvert A(D) \rvert$ is bounded by 
	$$\lvert A(\overline{\Gamma}_{2r+1}) \rvert=(2r+1)(r+1)-1.$$
	and equality does hold if and only if $D$ is isomorphic to $\overline{\Gamma}_{2r+1}.$
\end{prop}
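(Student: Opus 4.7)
The plan is a two-step degree-eccentricity bound followed by a structural contradiction to sharpen by one, and then a case analysis for uniqueness.

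\textbf{Upper bound.} For every vertex $v$ the out-BFS tree from $v$ contains a new vertex at each out-distance $1,2,\ldots,\ecc^+(v)$, so $\deg^+(v)\le n-\ecc^+(v)$, and symmetrically $\deg^-(v)\le n-\ecc^-(v)$. Since $\rad(D)=r$, every vertex satisfies $\ecc^+(v)+\ecc^-(v)\ge 2r$. Summing,
\[
2|A|=\sum_v\bigl(\deg^+(v)+\deg^-(v)\bigr)\le 2n^2-\sum_v\bigl(\ecc^+(v)+\ecc^-(v)\bigr)\le 2n^2-2nr,
\]
and with $n=2r+1$ this gives $|A|\le n(n-r)=(2r+1)(r+1)$.

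\textbf{Sharpening by one.} If $|A|=n(n-r)$, then equality must hold vertex-by-vertex in all three inequalities above: $\deg^\pm(v)=n-\ecc^\pm(v)$ and $\ecc^+(v)+\ecc^-(v)=2r$. Since $n(n-r)-|A|$ is a nonnegative integer, it suffices to rule out this ``fully tight'' configuration. After passing to the reverse digraph if necessary, we may assume $K:=\max_v\ecc^+(v)\ge r$; because $K$ is an integer and $r\ge \frac52$, this forces $K\ge 3$. Pick $v$ with $\ecc^+(v)=K$: the out-BFS from $v$ then has exactly $n-K$ out-neighbours, and a unique vertex $p_i$ at out-distance $i$ for each $2\le i\le K$. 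Let $u:=p_K$. Any in-neighbour $x$ of $u$ must satisfy $d(v,x)\ge K-1$, and the uniqueness of the $p_i$ then forces $x=p_{K-1}$, so $\deg^-(u)\le 1$. But $\deg^-(u)=n-\ecc^-(u)$ and $\ecc^-(u)\le 2r-\ecc^+(u)\le 2r-1$, giving $\deg^-(u)\ge n-(2r-1)=2$, a contradiction. Hence $|A|\le n(n-r)-1=(2r+1)(r+1)-1$.

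\textbf{Uniqueness.} When equality holds, exactly two units of slack are distributed among the quantities $\epsilon(v):=\ecc^+(v)+\ecc^-(v)-2r$ and $\delta^\pm(v):=n-\ecc^\pm(v)-\deg^\pm(v)$. Repeating the previous argument while tracking how each slack loosens the candidate count for $N^-(u)$ and the lower bound on $\deg^-(u)$ shows that the only consistent distribution is $\epsilon(v^\ast)=\epsilon(w^\ast)=1$ for two vertices $v^\ast,w^\ast$ with $(\ecc^+(v^\ast),\ecc^-(v^\ast))=(2r,1)$ and $(\ecc^+(w^\ast),\ecc^-(w^\ast))=(1,2r)$, all other slacks vanishing. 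These two vertices then play the roles of $v_0$ and $v_{2r}$ in $\overline{\Gamma}_{2r+1}$: the out-BFS from $v^\ast$ and the in-BFS into $w^\ast$ prescribe a unique directed Hamilton path $v^\ast\to\cdots\to w^\ast$, and the remaining eccentricity budget at each interior vertex forces exactly the ``transitive'' arcs $\vc{v_iv_j}$ with $i\ge j-1$, identifying $D$ with $\overline{\Gamma}_{2r+1}$. The main difficulty is this final step: systematically excluding alternative placements of the two slack units (a single $\epsilon=2$, a mixed $\epsilon$/$\delta^\pm$ contribution, or slack sitting at an interior vertex of the Hamiltonian backbone) requires careful bookkeeping of the local degree and eccentricity constraints that remain.
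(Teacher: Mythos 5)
Your upper bound and the sharpening to $(2r+1)(r+1)-1$ are both correct and follow essentially the same route as the paper: bound $\deg^{\pm}(v)$ by $n-\ecc^{\pm}(v)$ using the singleton BFS levels, sum over $v$ using $\ecc^+(v)+\ecc^-(v)\ge 2r$, and then kill the fully tight configuration by showing that the far endpoint $u$ of a longest shortest path has $\deg^-(u)\le 1$ while the equality constraints force $\deg^-(u)\ge 2$. (The paper phrases this with a diametral pair $x,y$ rather than a vertex of maximum out-eccentricity, but it is the same argument.)

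The uniqueness part, however, is a genuine gap, and you say so yourself: the assertion that the only consistent way to place the two units of slack is $\epsilon(v^\ast)=\epsilon(w^\ast)=1$ with $(\ecc^+,\ecc^-)=(2r,1)$ and $(1,2r)$ is precisely the content of the theorem, and nothing in your sketch establishes it. Once that claim is granted, the rest is indeed routine ($\ecc^+(v^\ast)=2r=n-1$ forces a Hamiltonian backbone of singleton BFS levels, and the zero-slack constraints at interior vertices pin down $N^+(v_i)=\{v_0,\dots,v_{i-1},v_{i+1}\}$), but the excluded configurations — a single vertex absorbing $\epsilon=2$ or $\delta^{\pm}=2$, mixed $\epsilon/\delta$ slack, slack at a non-endpoint, and in particular diametral paths of length $2r-1$ or $2r-2$ — are where all the work is. The paper spends most of its proof on exactly this: it first notes diameter $2r$ gives a subdigraph of $\overline{\Gamma}_{2r+1}$, then splits according to whether both or only one vertex of a diametral pair $(x,y)$ violates the equality constraints, and in each case examines an out-neighbour of $x$ or an in-neighbour of $y$ (whose own tight constraints force $\deg^-$ of nearby vertices to be $1$, $2$ or $3$ while the eccentricity budget demands more) to reach a contradiction unless the diameter is $2r$. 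Note also that the paper's uniqueness argument genuinely requires $r\ge 3$ (the case $r=\tfrac52$ is checked by computer), a subtlety your sketch does not surface. As it stands, your proposal proves the inequality but not the characterization of equality.
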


\begin{proof}
	As it was verified by a computer program for $r= \frac 52$ (can be done with more case distinction manually as well), we will assume $r\ge 3$.
	For every vertex $v$, one has $d(v,V)+d(V,v) \ge 2r$ by definition of the radius.
	Also there are no arcs from $v$ towards the vertices at distance $2,3, \ldots, d(v,V)$ or towards $v$ for at least $d(V,v)-1$ vertices.
	So at least $d(v,V)+d(V,v)-2 \ge 2r-2$ arcs from the possible $4r$ containing $x$ are not present, i.e. the total degree of every vertex is at most $2r+2.$
	This gives an upper bound of $\frac{n(2r+2)}{2}=(2r+1)(r+1).$
	Equality would be only possible if \begin{equation}\label{eq:1}
	d(v,V)+d(V,v) = 2r, \deg^+(v)=n-d(v,V) \mbox{ and }\deg^-(v)=n-d(V,v)
	\end{equation} for every vertex $v \in V$.
	Let $x$ and $y$ be vertices such that $d(x,y)$ equals the diameter which is at least $\lceil r \rceil \ge 3$.
	Now we can see that the indegree of $y$, $\deg^-(y)$, is exactly equal to one as there are arcs from $x$ towards every vertex not on the shortest path between $x$ and $y$ and $d(x,y) \ge 3.$
	But then $\deg^+(y)+\deg^-(y)\le 2r +1,$ so not all equalities in~\eqref{eq:1} can hold for $v=x.$
	From this we conclude that the upper bound $(2r+1)(r+1)$ can not be attained implying that $\lvert A(D) \rvert \le (2r+1)(r+1)-1.$
	
	%
	
	Now we will prove the uniqueness of the extremal digraph. 	
	Note that if the diameter equals $2r$, all vertices are part of the diameter and the digraph is a subdigraph of $\overline{\Gamma}_{2r+1}$, in which case the conclusion is clear.
	
	We will now do some case distinction, depending on the vertices that do not satisfy~\eqref{eq:1}. 
	First assume that there are only $2$ vertices $x$ and $y$ such that $d(x,y)$ equals the diameter and the constraints~\eqref{eq:1} are not satisfied by these two vertices.
	Observe that then the diameter is at least $4$ as it has to be strictly larger than $\lceil r \rceil \ge 3$.
	So for an outneighbour $z$ of $x$, we have $d(z,y)=d(z,V)$, while $z$ satisfies the constraints~\eqref{eq:1}.
	For every vertex $v$ not on the shortest path from $z$ to $y$, there is a directed edge from $z$ towards $v$ (since $\deg^+(z)=n-d(z,V)$) and as $d(z,y)\ge 3$ this implies $\deg^-(y)=1$. As a consequence, we need $\deg^+(y)=2r$, i.e. $d(y,V)=1.$
	Analogously for an inneighbour $w$ of $y$, and $v$ taken as before, there is a directed edge from $v$ to $w.$ 
	We conclude that $d(x,y) \le d(x,v)+d(v,y) \le 2+2=4.$
	But then $d(V,y)+d(y,V) \le 4+1<2r$, which is a contradiction.

	Now we look to the other case. Here we assume that $d(x,y)$ is the diameter and $x$ satisfies the equality constraints~\eqref{eq:1}. The case that $y$ satisfies the equality constraints~\eqref{eq:1} is analogous (e.g. by changing the direction of every arc).
	We know that $\deg^-(y)=1$ by the derivation of the upper bound.

	Since $\deg^+(y)+\deg^-(y)\ge 2r$, we have $\deg^+(y)\ge 2r-1$.
	This implies $\ecc^+(y)\le 2$ and consequently the diameter is at least $\ecc^-(y)\ge 2r-2.$

	If the diameter is $2r-2$, we need $\ecc^+(y)=2$ and thus $\deg^+(y) \le 2r-1,$ which needs to be an equality to ensure that the total degree of $y$ is $2r$. For all other vertices, the constraints~\eqref{eq:1} need to be satisfied.
	For the inneighbour $z$ from $y$, we know $d(V,z)=d(V,y)-1=2r-3$ since a shortest path from any vertex $v \in V \backslash \{z,y\}$ to $y$ uses $z$ and $d(y,z) \le 2$ as well.
	As $d(x,z) \ge 3$, we know $\deg^-(z)\le 2$ and so $\deg^-(z)=n-d(V,z)$ is not true.
	If the diameter is $2r-1$, one gets a contradiction by considering $y$ and its two predecessors $z',z$ in the shortest path from $x$ to $y$. In this case we have $d(V,z)=2r-2$ and $d(V,z')=2r-3$, while $\deg^-(z)\le 2$ and $\deg^-(z')\le 3.$ So also $z$ and $z'$ cannot satisfy~\eqref{eq:1}, from which the conclusion follows. 
\end{proof}

\section{Characterization of bipartite digraphs with maximum size}\label{sec:bipdigraph_maxSize}

In this section, we give the precise characterization of the bipartite digraphs with maximum size given their outradius. The maximum size was determined by Dankelmann~\cite{D15}, whose ideas we extend and write out in detail for completeness.

\begin{thr}\label{thr:bipdigraph_maxSize}
	Let $D$ be a bipartite digraph with outradius $r\ge 3$ and order $n$.
	Then $$\lvert A(D) \rvert \le \left\lceil \frac{n(n-2)}{4}\right\rceil +r-4 + \left\lfloor \frac{ (n-r+3)^2}{4} \right\rfloor.$$
	Furthermore equality occurs if and only if $D$ is isomorphic to the digraph obtained by taking blow-ups in $3$ consecutive non-end vertices of $\overline{\Gamma}^\star_{r+1} \cap K_{ \lceil \frac{r+1}2 \rceil, \lfloor \frac{r+1}2 \rfloor }$ by stable sets $aK_1,bK_1,cK_1$ with $\lvert a+c-(b+1) \rvert \le 1$ with the additional constraint that the bipartition class of the vertex with outeccentricity $r$ is at least as large as the other bipartition class in the case that $r$ is odd.
	Here $\overline{\Gamma}^\star_{r+1}=\overline{\Gamma}^\star_{r+1,r,1,1}$ is defined in Section~\ref{sec:intro} and the bipartition for $\overline{\Gamma}^\star_{r+1} \cap K_{ \lceil \frac{r+1}2 \rceil, \lfloor \frac{r+1}2 \rfloor }$ is the natural one with $\{v_i \colon 2 \mid i\} \cup \{v_i \colon  i \equiv 1 \pmod 2\}.$
	%
	%
\end{thr}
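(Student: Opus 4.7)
The plan is to stratify $V$ by the out-BFS layers from a vertex $v$ with $\ecc^+(v)=r$, prove a distance-aware refinement of the obvious outdegree bound at vertices in odd layers, and then reduce the aggregated inequality to the claimed formula via a substitution and AM--GM.

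Let $N_i=\{u\in V:d(v,u)=i\}$ and $n_i=|N_i|$ for $0\le i\le r$, and write $E_k=\sum_{j\le k,\,j\text{ even}}n_j$, $O_k=\sum_{j\le k,\,j\text{ odd}}n_j$, $S_e=E_r$, $S_o=O_r$ and $m=\sum_{i=0}^{r-1}n_in_{i+1}$. Bipartiteness forces consecutive layers to lie in opposite bipartition classes, so $n_i\ge 1$ for every $i$ and $\sum n_i=n$. The outneighbours of $u\in N_i$ lie in opposite-parity layers with index at most $i+1$, giving $\deg^+(u)\le O_{i+1}$ when $i$ is even and $\deg^+(u)\le E_{i+1}$ when $i$ is odd. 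The crucial refinement is that in the odd case with $i\ge 1$ one actually has $\deg^+(u)\le E_{i+1}-1$. To prove this I would argue by contradiction: if $u$ had an arc to every vertex of $N_0\cup N_2\cup\cdots\cup N_{i+1}$, then $d(u,w)=1$ for $w$ in an even layer up to $N_{i+1}$; $d(u,w)\le 2$ for $w$ in any odd layer with index at most $i+2$ (via an adjacent reachable even layer); and $d(u,w)\le 1+(j-i-1)=j-i$ for $w\in N_j$ with $j\ge i+1$ (via a BFS-predecessor of $w$ inside $N_{i+1}$). Taking the maximum over $w$ gives $\ecc^+(u)\le\max(2,r-i)\le r-1$, contradicting the outradius being $r$.

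Summing the local outdegree bounds and regrouping, each unordered opposite-parity pair $\{i,j\}$ of indices with $|i-j|=1$ contributes $n_in_j$ twice and with $|i-j|\ge 3$ contributes it once, yielding
\[
|A|\le \sum_{i\text{ even}}n_iO_{i+1}+\sum_{i\text{ odd}}n_i(E_{i+1}-1)=S_eS_o+m-S_o=S_o(S_e-1)+m.
\]
As $S_e+S_o=n$, the concave quadratic $S_o(n-1-S_o)$ is maximised at the integers nearest $(n-1)/2$ and attains $\lceil n(n-2)/4\rceil$. For $m$, the substitution $n_i=1+a_i$ with $a_0=0$, $a_i\ge 0$ and $\sum_{i=1}^{r}a_i=n-r-1$ gives $m=(2n-r-2)-a_r+\sum_{i=0}^{r-1}a_ia_{i+1}$; every product $a_ia_{i+1}$ pairs an odd-indexed with an even-indexed $a_i$, so splitting by parity and applying AM--GM yields $\sum_{i=0}^{r-1}a_ia_{i+1}\le \lfloor(n-r-1)^2/4\rfloor$, whence $m\le (r-4)+\lfloor(n-r+3)^2/4\rfloor$ after rewriting. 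Adding the two estimates proves the claimed upper bound on $|A|$.

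Equality forces simultaneous equality in both maximisations. Saturation of the $m$-bound forces $a_r=0$, concentrates the mass $n-r-1$ onto at most three consecutive indices $a_{k-1},a_k,a_{k+1}$ with $a_k$ roughly balancing $a_{k-1}+a_{k+1}$, and thereby forces the only layers of size larger than $1$ to be three consecutive ones with sizes $a,b,c$ satisfying $a+b+c=n-r+2$ and $|a+c-(b+1)|\le 1$; the conditions $a_0=a_r=0$ place them at non-end positions of the base path. Saturation of the $S_o(S_e-1)$-bound demands the bipartition to be as balanced as possible, and for odd $r$ the parity arithmetic around $\overline{\Gamma}^\star_{r+1}$ (which is balanced) then forces the class containing $v$ to be the larger one. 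Finally, equality in each local outdegree bound of the first paragraph determines, for every $u$, precisely which opposite-parity vertices are its outneighbours, and this outneighbour information matches exactly the arc set of the blow-up of $\overline{\Gamma}^\star_{r+1}\cap K_{\lceil(r+1)/2\rceil,\lfloor(r+1)/2\rfloor}$ at the three chosen layers. The hardest step will be this equality analysis, since the three sharpness conditions (the outradius refinement, the AM--GM estimate and the bipartition quadratic) must be combined simultaneously to recover the extremal structure uniquely up to isomorphism.
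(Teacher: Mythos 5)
Your proposal follows essentially the same route as the paper's proof: the same BFS layering from a vertex of out-eccentricity $r$, the same parity-based outdegree bounds with the refinement $\deg^+(u)\le E_{i+1}-1$ on odd layers proved by the same eccentricity contradiction, the same aggregation into $(n_e-1)n_o+\sum_i n_in_{i+1}$, and the same substitution $n_i=1+a_i$ with the product bound $\lfloor(n-r-1)^2/4\rfloor$ to locate the three consecutive blown-up layers. The only point treated more lightly than in the paper is the equality analysis identifying \emph{which} single out-neighbour is missing for vertices in odd layers (the paper shows it must be $v$ itself), but this is consistent with the sketch you give and does not change the approach.
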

\begin{proof}
	Let $D$ be an extremal digraph.
	Let $v$ be a vertex of $D$ with $\ecc^+(v)=r$ and $N_i=N_i^+(v)=\{u \in V\mid d(v,u)=i\}$ be its $i^{th}$ outneighbourhood.
	Let $n_i=\lvert N_i \rvert$ for every $i$, $n_e=\sum_{i\mbox{ even}} n_i$ and $n_o=\sum_{i\mbox{ odd}} n_i$.
	Due to this definition and the bipartiteness condition, we have that for every $u \in N_i$ with $i$ even $N^+(u) \subset N_1 \cup N_3 \ldots \cup N_{i+1}$. 
	Similarly, for every $u \in N_i$ with $i$ odd, we have 
	$N^+(u) \subset N_0\cup N_2 \cup \ldots N_{i+1}.$
	Assume for some $u \in N_i$ with $i$ odd, equality is true.
	Then $\ecc^+(u) \le \max\{2,r-i\}<r$, since all vertices in $N_{ \le i+2}$ are at distance at most $2$ from $u.$
	
	Now assume $N^+(u) \subset N_1 \cup N_3 \ldots \cup N_{i+1}$ for every $u \in N_i$ with $i$ even and 
	$N^+(u) =\left( N_0\cup N_2 \cup \ldots N_{i+1}\right) \backslash w(u)$ for every $u \in N_i$ with $i$ odd.
	Assume there is such a vertex $u \in N_i$ ($i$ odd) for which $w(u) \not =v.$
	If $i>1$, then a vertex $z \in N_{i-1}$ satisfies $\ecc^+(z)=\max\{r-i+1,2\}<r$, which is a contradiction.
	For this, note that every vertex in $N_{ \le i+2}$ is at distance at most $2$ from $z,$
	as $\vc{zu}, \vc{uv}$ is a directed path towards $u$ and $N^+(z)= N_1 \cup N_3 \ldots \cup N_{i}.$
	If $i=1$, then any vertex $z \in N_{2}$ satisfies $\ecc^+(z)=\max\{r-2,2\}<r$ as $N^+(N^+(z))=N_{\le 4}$, which is again a contradiction.
	This implies that in an extremal digraph (satisfying the assumptions), for every $u \in N_i$ with $i$ odd we have $w(u)=v$, i.e. $N^+(u)=N_2\cup N_4 \cup \ldots N_{i+1}.$
	
	Such a digraph has size 
	\begin{align*}
	\lvert A(D) \rvert&= \sum_{ i \mbox{ even}}n_i(n_1+n_3+\ldots +n_{i+1}) + \sum_{ i \mbox{ odd}}n_i(n_2+n_4+\ldots +n_{i+1})\\
	&= n_0n_1+ \sum_{ i\ge 2 \mbox{ even}} n_i (n_o + n_{i-1}+n_{i+1})\\
	&=(n_e-1) n_o +\sum_{i\ge 0} n_i n_{i+1}.
	\end{align*}
	
	The term $(n_e-1) n_o$ is maximized when $\lvert (n_e-1)- n_o\rvert \le 1$.
	The maximum for the second term is determined in the following claim.
	\begin{claim}
		Let $n>r$ be two fixed integers.
		Let $n_1,n_2,\ldots, n_r \ge 1$ be $r$ integers for which $1+\sum_{i=1}^r n_i =n.$
		Then $n_1+\sum_{i=1}^{r-1} n_i n_{i+1}$ attains the maximum if and only if all $n_i$ are equal to $1$, except from at most $3$ consecutive values $n_j, n_{j+1},n_{j+2}$ satisfying $\lvert n_{j+1}+1-(n_j+n_{j+2})\rvert \le 1$ and $n_r=1$.
	\end{claim}
	
	\begin{claimproof}
		Let $n_0=1$ and let $x_i=n_i-1$ for every $i$.
		Let $x_e=\sum_{i\ge0} x_{2i}$ and $x_o=\sum_{i\ge 0} x_{2i+1}.$
		Then \begin{align*}
		n_1+2\sum_{i=1}^{r-1} n_i n_{i+1}&=r+2\sum_{i=1}^{r-1}x_i+x_{r}+\sum_{i=1}^{r-1}x_ix_{i+1}\\
		&\le r + 2(n-r-1)+x_ox_e\\
		&\le r + 2(n-r-1)+\left\lfloor \frac{(n-r-1)^2}4\right\rfloor
		\end{align*}
		Equality can only occur if every inequality is an equality.
		This implies that $x_r=0$ and if $j$ is the smallest index for which $x_j>0$, then one needs $x_{j+1}=x_o$ and $x_j+x_{j+2}=x_e$, as well as $\lvert x_e -x_o \rvert \le 1.$ Since $x_e -x_o=x_j+x_{j+2}-x_{j+1}=(n_j+n_{j+2})-(n_{j+1}+1)$, this is equivalent with the statement of the claim.
	\end{claimproof}
	When $r$ is even, we have $\lvert (n_e-1)- n_o\rvert=\lvert n_{j+1}+1-(n_j+n_{j+2})\rvert \le 1$ and so the condition is sufficient.
	When $r$ is odd, we have to add the condition $n_e \ge n_o$ as then $\lvert (n_e-1)- n_o\rvert \le 1$ will be satisfied once $\lvert n_{j+1}+1-(n_j+n_{j+2})\rvert \le 1$.	
\end{proof}

In Figure~\ref{fig:digraph_Dankelmann} an example of an extremal bipartite digraph with outradius $6$ is given, 
this is an example where the bipartition of the extremal digraph is not balanced (whenever $n$ is even).

\begin{figure}[h]
	\centering	
	\begin{tikzpicture}
	
	\definecolor{cv0}{rgb}{0.0,0.0,0.0}
	\definecolor{c}{rgb}{1.0,1.0,1.0}

	\Vertex[L=\hbox{$v_0$},x=2,y=-2]{v0}
	\Vertex[L=\hbox{$v_1$},x=3.5,y=1]{v1}
	\Vertex[L=\hbox{$v_2$},x=5,y=-2]{v2}
	\Vertex[L=\hbox{$a K_1$},x=6.5,y=1]{v3}
	\Vertex[L=\hbox{$\lceil \frac{n-5}2 \rceil K_1$},x=8,y=-2]{v4}
	\Vertex[L=\hbox{$\left(\lfloor \frac{n-3}2 \rfloor-a\right) K_1$},x=9.5,y=1]{v5}
	\Vertex[L=\hbox{$v_6$},x=11,y=-2]{v6}

	\Edge[lw=0.1cm,style={post, right}](v2)(v1)
	\Edge[lw=0.1cm,style={post, right}](v3)(v2)
	\Edge[lw=0.1cm,style={post, right}](v4)(v3)
	\Edge[lw=0.1cm,style={post, right}](v5)(v4)
	\Edge[lw=0.1cm,style={post, right}](v6)(v5)
	\Edge[lw=0.05cm,style={post, right}](v5)(v2)

	\Edge[lw=0.1cm,style={post, right}](v0)(v1)
	\Edge[lw=0.1cm,style={post, right}](v1)(v2)
	\Edge[lw=0.1cm,style={post, right}](v2)(v3)
	\Edge[lw=0.1cm,style={post, right}](v3)(v4)
	\Edge[lw=0.1cm,style={post, right}](v4)(v5)
	\Edge[lw=0.1cm,style={post, right}](v5)(v6)
	
	\Edge[lw=0.1cm,style={post, right}](v4)(v1)
	\Edge[lw=0.1cm,style={post, right}](v6)(v1)
	\Edge[lw=0.1cm,style={post, right}](v6)(v3)

	\end{tikzpicture}

	\caption{An extremal bipartite digraph of outradius $6$ and order $n$ maximizing the size}
	\label{fig:digraph_Dankelmann}
\end{figure}

\section{On bipartite biconnected digraphs with maximum size}\label{sec:bipbicDigraphs_maxSize}

For outradius $1$, the extremal digraph is a bidirected star.
Taking a vertex $v$ with $\ecc^+(v)=1$, we know all other vertices are in the other partition class and so no more than $2(n-1)$ arrows can be present.
When the outradius is $2$ and $n \ge 4$, the extremal digraph is the directed balanced complete bipartite digraph, which has size $\lfloor \frac{n^2}{2} \rfloor$.
So note that when restricting to bipartite digraphs, a larger radius does not imply a smaller size in this particular case.

When $\rad^+=3$, extremal digraphs can be obtained by, for every vertex $v$, removing one directed edge that starts in $v$, in such a way that the resulting digraph is still biconnected.
The bipartite subdigraphs of $D_{n,3,\lfloor \frac{n-4}{2} \rfloor}$ are among them, but are not the only extremal bipartite (biconnected) digraphs.

For outradius $r \ge 4$, we conjecture that the extremal digraphs are precisely of this form.
For example for $r=4$, the maximum size would be $\lfloor \frac{(n-2)^2}{2} \rfloor$ and being attained by the digraph in Figure~\ref{fig:Dnrs_bip}.

\begin{conj}\label{conj:extr_bicon_bip_digraph_rad+ge4}
	Among the bipartite biconnected digraphs of order $n$ and outradius $\rad^+=r \ge 4$, the maximum size is attained by $D_{n,r,\lfloor \frac{n}{2}-(r-1) \rfloor } \cap K_{ \lfloor \frac{n}{2} \rfloor, \lceil \frac{n}{2} \rceil }.$
	This being obtained by taking two blow-ups in $v_1$ and $w_1$ of the bipartite subdigraph of $D_{2r,r,1}$ with bipartition classes $V_1=\{v_i \mid i \equiv 1 \pmod 2\}\cup \{w_i \mid i \equiv 0 \pmod 2\}$ and $V_2=\{v_i \mid i \equiv 0 \pmod 2\}\cup \{w_i \mid i \equiv 1 \pmod 2\}.$
\end{conj}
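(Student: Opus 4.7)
The plan is to mirror the strategy used for Theorem~\ref{mainVz_bicDi}, adapted to the bipartite setting, and establish the conjecture in the regime where $r$ is even and $n$ is sufficiently large in terms of $r$. Writing $D^{\star} := D_{n,r,\lfloor n/2-(r-1)\rfloor} \cap K_{\lfloor n/2\rfloor,\lceil n/2\rceil}$ for the candidate extremum, one computes $|A(D^{\star})|$ explicitly from the description; the task is then to show that any extremal bipartite biconnected digraph $D$ of order $n$ and outradius $r$ must be isomorphic to $D^{\star}$.

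First I would prove a bipartite analogue of Lemma~\ref{lem2di_size}: for $n$ large in terms of $r$, any extremal $D$ admits a vertex $v$ whose removal preserves both the outradius and all pairwise distances in $D\setminus v$. For this, the size lower bound $|A(D)|\ge |A(D^{\star})|$ should force $D$ to contain a large bidirected complete bipartite subdigraph $K_{a,b}$ with $a+b=\Omega(n)$, all of whose vertices have total degree within $O(r)$ of the bipartite maximum $2\lfloor n/2\rfloor$. This follows by a counting/pigeonhole argument applied separately in the two bipartition classes, after which the analogue of \cite[Lemma~4.2]{cambie2019extremal} produces the removable vertex.

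Second, I would establish a bipartite analogue of Proposition~\ref{proplem_size}: in a bipartite digraph of order $n$ and outradius $r$, the total degree of every vertex $v$ satisfies $\deg(v)\le 2\lfloor n/2\rfloor - c(r)$ for an explicit $c(r)=\Theta(r)$, and equality forces the existence of two disjoint directed out-paths from $v$ of lengths $r$ and $r-1$ together with an almost-complete inneighbourhood from the opposite bipartition class. The parity assumption enters exactly here: for even $r$, the endpoints of the two prescribed paths sit in the same bipartition class, which is what $D^{\star}$ realises, while for odd $r$ this parity requirement mismatches the balanced bipartition, which is why the method does not cover odd $r$ cleanly.

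The end-game mirrors the proof of Theorem~\ref{mainVz_bicDi}. Iteratively remove vertices $v_n,v_{n-1},\ldots,v_{n_0+1}$ from $D$ without altering remaining distances; if the total-degree bound is strict at every removal, summing yields $|A(D)|<|A(D^{\star})|$ for $n$ large, contradicting extremality. Hence some removal achieves equality, and the equality characterisation exposes a skeleton consisting of two disjoint directed paths of lengths $r$ and $r-1$ playing the role of the $v_i$- and $w_j$-chains of $D_{2r,r,1}$. The main obstacle, and the most technically delicate part, is the subsequent case analysis: one must classify the in- and out-neighbourhoods of every remaining vertex of the reduced digraph with respect to this skeleton, using $\ecc^+(u)\ge r$ for every $u$ together with biconnectedness and bipartiteness, and show that the only way to attain the target size is to absorb all remaining vertices into blow-ups at $v_1$ and $w_1$; re-inserting the removed vertices one by one and repeatedly applying the equality case of the bipartite analogue of Proposition~\ref{proplem_size} then forces $D\cong D^{\star}$.
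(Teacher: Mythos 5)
Your plan coincides with the paper's actual treatment: the statement is a conjecture, and the paper proves it only for even $r$ and $n$ large (Theorem~\ref{thr:bipbicDigraphMaxSize}), via exactly the three ingredients you list — a large bidirected $K_{k,k}$ of high-degree vertices (Lemma~\ref{lem:Kkksubdigraph}), a removable vertex preserving distances and outradius (Lemma~\ref{lem:removevertexwithoutchangingdistances}), and a total-degree bound with an equality characterisation (Proposition~\ref{proplem_size2}), followed by iterated removal and case analysis. Three points where your sketch falls short of what is needed. First, the degree bound must be stated as $\deg(v)\le 2\lvert V_2\rvert-(r-2)$ for $v\in V_1$, i.e.\ relative to the opposite bipartition class, not as $2\lfloor n/2\rfloor-c(r)$; near-balancedness of the bipartition is not free and has to be extracted afterwards from the size lower bound (the paper shows the classes differ by at most $(r-2)n_0$ and then alternates removals between the classes). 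Second, your parity heuristic is off: the endpoints $u_r$ and $w_r$ lie at distances $r$ and $r-1$ from $v$, hence in \emph{different} classes; the real reason odd $r$ is excluded is that the equality case of the degree bound loses its rigidity there (a blow-up of $C_{r+1}$ also attains it), as the paper remarks just before Proposition~\ref{proplem_size2}. Third, and most substantively, the endgame as you describe it — one removal attaining equality, then classify every remaining vertex against that single skeleton — does not close: one equality vertex does not determine enough of the digraph to force the extremal count or uniqueness. The paper needs an extra pigeonhole step producing \emph{two adjacent} equality-attaining vertices $v\in V_1$ and $v'\in V_2$ discarded consecutively, and it is the interaction of their two equality structures (ruling out $v'=w_2$ and forcing $v'=v_1$) that makes the subsequent case analysis go through. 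Without that device the competing configuration cannot be excluded.
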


\begin{figure}[h]
	\centering
	
	\scalebox{0.9}{
		\begin{tikzpicture}
		\definecolor{cv0}{rgb}{0.0,0.0,0.0}
		\definecolor{c}{rgb}{1.0,1.0,1.0}

		\Vertex[L=\hbox{$v_2$},x=13,y=0]{v1}
		\Vertex[L=\hbox{$v_3$},x=15,y=0]{v2}
		\Vertex[L=\hbox{$v_4$},x=16.5,y=0]{v3}
		\Vertex[L=\hbox{$w_2$},x=5,y=0]{w1}
		\Vertex[L=\hbox{$w_3$},x=3,y=0]{w2}
		\Vertex[L=\hbox{$w_4$},x=1.5,y=0]{w3}
		\Vertex[L=\hbox{$\lfloor \frac{n}{2}-3 \rfloor K_1$},x=7,y=0]{w0}
		
		\Vertex[L=\hbox{$\lceil \frac{n}{2}-3 \rceil K_1$},x=10cm,y=0.0cm]{v0}
		
		\Edge[lw=0.1cm,style={post, right}](v0)(v1)
		\Edge[lw=0.1cm,style={post, right}](v1)(v2)
		\Edge[lw=0.1cm,style={post, right}](v3)(v2)
		\Edge[lw=0.1cm,style={post, right}](w0)(v0)
		\Edge[lw=0.1cm,style={post, right}](v1)(v0)
		\Edge[lw=0.1cm,style={post, right}](v2)(v1)
		\Edge[lw=0.1cm,style={post, right}](v2)(v3)
		\Edge[lw=0.1cm,style={post, right}](v0)(w0)
		\Edge[lw=0.1cm,style={post, right}](w1)(w2)
		\Edge[lw=0.1cm,style={post, right}](w3)(w2)
		\Edge[lw=0.1cm,style={post, right}](w0)(w1)
		\Edge[lw=0.1cm,style={post, right}](w1)(w0)
		\Edge[lw=0.1cm,style={post, right}](w2)(w1)
		\Edge[lw=0.1cm,style={post, right}](w2)(w3)	
		
		\Edge[lw=0.1cm,style={post, bend right}](w2)(v0)
		\Edge[lw=0.1cm,style={post, bend left}](w3)(w0)
		
		\Edge[lw=0.1cm,style={post, bend right}](v2)(w0)
		
		\Edge[lw=0.1cm,style={post, bend left}](v3)(v0)
		

		\end{tikzpicture}}
	\caption{The bipartite digraph $D_{n,4,\lfloor \frac{n}{2}-3 \rfloor} \cap K_{ \lfloor \frac{n}{2} \rfloor, \lceil \frac{n}{2} \rceil}$ 	}
	\label{fig:Dnrs_bip}
\end{figure}

We prove Conjecture~\ref{conj:extr_bicon_bip_digraph_rad+ge4} for $n$ sufficiently large in terms of $r$, when $r$ is even.

\begin{lem}\label{lem:Kkksubdigraph}
	Let $D$ be a bipartite digraph $(V_1 \cup V_2, A)$ of order $n$ with average total degree at least being equal to $n-t$, where $n>9t$. 
	Then at least $\lvert V_1 \rvert - \frac n6$ vertices in $V_1$ have a total degree which is at least $2|V_2|-3t+1$ and at least $\lvert V_2 \rvert - \frac n6$ vertices in $V_2$ have a total degree which is at least $2|V_1|-3t+1$.
	Also $D$ contains a bidirected complete bipartite $K_{k,k}$ where $k \ge \frac n{18t},$ whose vertices have total degree being at least equal to $2|V_1|-3t+1$ or $2|V_2|-3t+1$ (depending on the bipartition class they belong).
\end{lem}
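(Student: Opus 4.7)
The plan is to split the lemma into three naturally sequential claims: (i) the hypothesis forces $|V_1|$ and $|V_2|$ each to exceed $n/3$; (ii) a single counting argument yields both degree-bound assertions; and (iii) the bidirected $K_{k,k}$ falls out by intersecting common bidirectional neighbourhoods.

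For (i) I would observe that the assumption on the average total degree gives $|A|\ge n(n-t)/2$, while bipartiteness forces $|A|\le 2|V_1||V_2|$. Writing $a=|V_1|$ and $b=|V_2|$, these combine to $(a-b)^2=n^2-4ab\le nt$, so $|a-b|\le \sqrt{nt}$. The standing hypothesis $n>9t$ makes $\sqrt{nt}<n/3$, and therefore $a,b>n/3$.

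For (ii) I would use that any vertex of $V_1$ has total degree at most $2b$ (all its neighbours lie in $V_2$). If $x$ vertices of $V_1$ have total degree $\le 2b-3t$, summing yields
\[
|A|=\sum_{v\in V_1}\deg(v)\le (a-x)(2b)+x(2b-3t)=2ab-3tx.
\]
Combining $|A|\ge n(n-t)/2$ with $2ab\le n^2/2$ gives $3tx\le nt/2$, so $x\le n/6$. Thus at least $a-n/6$ vertices of $V_1$ have total degree $\ge 2b-3t+1$, and by symmetry at least $b-n/6$ vertices of $V_2$ have total degree $\ge 2a-3t+1$.

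For (iii), call the two ``good'' sets $V_1'$ and $V_2'$. The key observation is that a vertex $v\in V_1'$ can fail to be joined bidirectionally to at most $3t-1$ vertices of $V_2$: each $u\in V_2$ bidirectionally joined to $v$ contributes $2$ to $\deg(v)$ and every other $u\in V_2$ contributes at most $1$, so the number of non-bidirected $u$ is bounded by $2b-\deg(v)\le 3t-1$. The symmetric statement holds for $V_2'$. Setting $k=\lfloor n/(18t)\rfloor$, I would pick any $k$ vertices $u_1,\dots,u_k\in V_1'$ and let $S$ be their common bidirectional neighbourhood inside $V_2'$; each $u_i$ excludes at most $3t-1$ vertices from $S$, so $|S|\ge |V_2'|-k(3t-1)>n/6-k(3t-1)\ge k$ because $b>n/3$ and $3tk\le n/6$. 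Any $k$ vertices of $S$ together with $u_1,\dots,u_k$ form the required bidirected $K_{k,k}$, with the asserted total-degree lower bounds since all its vertices lie in $V_1'\cup V_2'$. The proof is bookkeeping rather than conceptually hard; the only place that needs care is dovetailing the three constants $n/6$, $3t-1$ and $k\le n/(18t)$, which is precisely what the hypothesis $n>9t$ is calibrated for.
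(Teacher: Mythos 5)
Your proof is correct and follows essentially the same route as the paper: the same counting argument bounds the number of low-degree vertices in each class and shows both classes exceed $n/3$, and the extraction of the bidirected $K_{k,k}$ rests on the identical observation that each high-degree vertex fails to be bidirectionally joined to at most $3t-1$ vertices of the opposite class. The only cosmetic difference is that you build the $K_{k,k}$ in one shot by intersecting the bidirectional neighbourhoods of $k$ fixed vertices, whereas the paper alternately picks a vertex from each side and prunes the other set; both versions hinge on the same arithmetic $3tk\le n/6$.
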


\begin{proof}
	The first part is true as the contrary would lead to a contradiction.
	E.g. when there are more than $\frac n6$ vertices in $V_1$ with total degree at most $2|V_2|-3t$, then 
	$\lvert A(D) \rvert < 2 \lvert V_1 \rvert \lvert V_2 \rvert - \frac n6 3t \le \frac{n(n-t)}{2}.$
	Note that $\lvert V_1 \rvert$ and $ \lvert V_2 \rvert$ are both equal to at least $\frac{n}{3}$ as otherwise $2 \lvert V_1 \rvert \lvert V_2 \rvert < \frac{n(n-t)}{2}.$
	
	Now let $S_1 \subset V_1$ and $S_2 \subset V_2$ be $\frac n6$ vertices with total degree at least $2|V_2|-3t+1$ or $2|V_1|-3t+1$ respectively.
	Now iterate the following procedure where one vertex of $S_1$ and one vertex of $S_2$ is taken in every step.
	Pick a vertex $v$ from $S_1$ and delete all $u \in S_2$ for which not both $\vc{vu}$ and $\vc{uv}$ belong to $A$. Next, do the same for $v \in S_2$ and deleting vertices in $S_1$ which are not connected in both directions with $v$.
	
	We can do this at least $\frac{n}{18t}$ times, as then at most $\frac{n}{18t}(3t-1)$ vertices of each set was discarded. 
	The $\frac{n}{9t}$ vertices (or more) at the end of this procedure form the desired $K_{k,k}$.
\end{proof}

\begin{lem}\label{lem:removevertexwithoutchangingdistances}
	Let $D$ be a bipartite digraph $(V_1 \cup V_2, A)$ with outradius $r\ge 4$, order $n$ and size at least $\frac{n(n-t)}{2}$ such that it contains a bidirected complete bipartite digraph $K_{k,k}$ for which all of its vertices have total degree being at least equal to $2|V_1|-3t+1$ or $2|V_2|-3t+1$ (depending on the bipartition class they belong).
	If $k>2(18t^2+3t)+1+\frac{tn}{2k}$, then for both choices of $i \in \{1,2\}$, there is a vertex $v \in K_{k,k}$ with $v\in V_i$ such that $D \backslash v$ has outradius $r$ and the distance between any $2$ vertices of $D \backslash v$ equals the distance between them in $D.$
\end{lem}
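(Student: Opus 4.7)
The plan is to adapt the counting argument underlying Lemma~\ref{lem2di_size} (which rests on \cite[Lemma~4.2]{cambie2019extremal}) to the bipartite setting. Fix $i \in \{1,2\}$ and write $U_i = V(K_{k,k}) \cap V_i$, so that $|U_i| = k$ and every vertex of $U_i$ misses at most $3t - 1$ arrows (incoming plus outgoing) to $V_{3-i}$ in $D$. Call $v \in U_i$ \emph{bad} if either $D \setminus v$ has outradius strictly less than $r$, or there is a pair $(x,y) \in (V \setminus v)^2$ with $d_{D \setminus v}(x,y) > d_D(x,y)$. It suffices to show that fewer than $k$ vertices of $U_i$ are bad; the lemma then follows by applying the argument to each $i \in \{1,2\}$.

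For distance preservation, I say $v \in U_i$ is a \emph{pivot} for $(x,y)$ if $v$ lies on every shortest $x$-to-$y$ walk in $D$. The structural feature I exploit is that any two $v, v' \in U_i$ satisfy $d_D(v, v') = 2$, realised through any of the $k$ vertices $w \in K_{k,k} \cap V_{3-i}$, each of which is itself near-universally adjacent. Thus any shortest $x$-to-$y$ walk through $v$ admits a local detour, swapping the length-$2$ stretch at $v$ for a stretch through a different $v' \in U_i$ via some $w$, unless every such attempt requires one of the at most $3t-1$ missing arrows at some vertex of $K_{k,k}$. For a fixed pivot triple $(v,x,y)$, each of the $k - 1$ candidate partners $v' \in U_i \setminus v$ must fail, and each failure can be charged to a specific missing arrow at a $K_{k,k}$-vertex. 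Double-counting the pivot-triple failures and using that the total number of missing arrows at vertices of $K_{k,k}$ is $O(kt)$ bounds the number of pivot triples $(v, x, y)$ with $v \in U_i$ by $\tfrac{tn}{2}$, whence by averaging at most $\tfrac{tn}{2k}$ vertices of $U_i$ are distance-bad.

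For outradius preservation, $v \in U_i$ is outradius-bad precisely when some vertex $u$ with $\ecc_D^+(u) = r$ has $v$ as its unique out-eccentric vertex. Such a $u$ cannot lie in $V(K_{k,k})$, since inside $K_{k,k}$ all distances are at most $2 < r$; for $u \in V \setminus V(K_{k,k})$, at most one $v \in U_i$ can be its unique far vertex, and uniqueness forces every $v' \in U_i \setminus v$ to satisfy $d_D(u, v') < r$. The latter condition pins the at most $3t - 1$ missing arrows at $v$ into configurations determined by the structure of walks from $u$ into $K_{k,k}$; counting these configurations bounds the number of outradius-bad vertices of $U_i$ by $2(18t^2 + 3t)$. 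Combining, the total number of bad vertices is at most $2(18t^2 + 3t) + \tfrac{tn}{2k} < k$ by hypothesis, producing the desired good vertex. The main technical obstacle is making the rerouting argument precise: for each pivot triple $(v, x, y)$, one must cleanly identify a distinct missing arrow for each of the $k - 1$ alternative partners $v' \in U_i$, so that the charging argument yields the claimed $\tfrac{tn}{2k}$ bound rather than a weaker $\tfrac{tn}{2}$ one.
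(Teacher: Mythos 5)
Your overall architecture (bound the set of vertices whose removal perturbs distances, bound the set whose removal drops the outradius, and invoke the hypothesis on $k$ to find a vertex avoiding both) is the same as the paper's, but both of your sub-arguments have genuine gaps, and you have attached the two numerical bounds to the wrong sub-arguments. In the paper, the distance-preservation part is the one bounded by $2(18t^2+3t)$: it is obtained by citing the construction of exceptional sets $S_1\subset V_1$, $S_2\subset V_2$ of size at most $18t^2+3t$ each from \cite[Lem.~4.2]{cambie2019extremal}, not by a triple-counting argument. Your version of this step does not close: knowing that the number of pivot triples $(v,x,y)$ is at most $\frac{tn}{2}$ does not yield ``at most $\frac{tn}{2k}$ distance-bad vertices by averaging'', since a single triple already makes $v$ bad; you would need every bad $v$ to occur in at least $k$ triples, which you do not establish. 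Moreover the rerouting is not forced to fail on a missing arrow at a $K_{k,k}$-vertex in a way that can be charged injectively: the predecessor $p$ and successor $s$ of $v$ on a shortest path are arbitrary vertices of $V_{3-i}$, and a single $p$ with no arrow to $U_i\setminus\{v\}$ defeats all $k-1$ detours while contributing only one missing arrow to each $v'$, well within the $3t-1$ budget of each. You acknowledge this obstacle, but it is precisely the part that needs a different idea (the paper avoids it entirely by importing the earlier lemma).

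For the outradius part, the bound that is actually available is $\frac{tn}{2k}$, not $2(18t^2+3t)$, and the mechanism is the one your sketch omits: if removing $v$ drops the outradius to $r-1$, the witness $w_v$ satisfies $d(w_v,v)=r\ge 4$, hence $w_v$ has \emph{no} arrow into $K_{k,k}$ at all (an arrow into $K_{k,k}$ would give $d(w_v,v)\le 3$), so each such witness accounts for $k$ arrows missing from the trivial upper bound $2|V_1||V_2|$; since distinct bad $v$ have distinct witnesses and the size hypothesis caps the total deficiency by $\frac{tn}{2}$, one gets $\lvert T\rvert\le\frac{tn}{2k}$. Your alternative claim that ``counting configurations'' of the $3t-1$ missing arrows at $v$ bounds the outradius-bad set by $2(18t^2+3t)$ is not substantiated, and I do not see how it could be, since that count concerns arrows at $v$ rather than at the witness. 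Finally, you never treat the case where the outradius of $D\setminus v$ \emph{exceeds} $r$, which happens exactly when $v$ is the unique vertex of out-eccentricity $r$; this is the source of the ``$+1$'' in the hypothesis and costs one more excluded vertex.
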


\begin{proof}
	Analogous to the proof of~\cite[Lem.~4.2]{cambie2019extremal} (which is a digraph version of Lemma~{lem3}), we can select sets $S_1 \subset V_1$ and $S_2 \subset V_2$, both containing at most $2\cdot(3t)^2+3t=18t^2+3t$ vertices.
	Hereby they are chosen so that for any $v \in K_{k,k} \backslash (S_1\cup S_2)$, the distance measure in $D \backslash v$ equals the restriction to $D \backslash v$ of the distance measure in $D.$ 
	Again there is at most one vertex such that the outradius might have increased.
	
	For any vertex $v \in K_{k,k} \backslash (S_1\cup S_2)$, the outradius of $D \backslash v$ is at least $r-1$. In case equality holds, there is exactly one vertex $w_v$ in $D \backslash K_{k,k}$ such that $d(w_v,v)=r$ and the out-eccentricity of $w_v$ as a vertex in $D \backslash v$ equals $r-1$.
	Let $T$ be the set of all vertices $v \in K_{k,k} \backslash (S_1\cup S_2)$ for which this happens.
	As for every $v \in T$, there is an associated $w_v$ which is not associated with another element from $T$, for which there is no edge from $w_v$ to any element of $K_{k,k}$ since $r \ge 4$.
	This implies that at least $k \lvert T \rvert $ possible arrows (i.e. arrows counted in the upper bound $2|V_1||V_2|$) are missing, which has to be at most $\frac{tn}{2}$ due to the lower bound on the size for $|A|$, i.e. $\lvert T \rvert \le \frac{tn}{2k}.$
	
	As $k>2(18t^2+3t)+1+\frac{tn}{2k} \ge \lvert S_1\cup S_2 \rvert +1+ \lvert T \rvert $, we can choose a vertex $v \in \left(V_i \cap K_{k,k}\right) \backslash (S_1 \cup S_2 \cup z^* \cup T).$ This vertex will satisfy all conditions of the Lemma.
\end{proof}

We now prove an analogue of Proposition~\ref{proplem_size}. The uniqueness part does not work when $r$ is odd, as then the blow-up of a $C_{r+1}$ does the job as well for example.
That is also why some more case analysis is needed here.

\begin{prop}\label{proplem_size2}
	Let $D=(V_1 \cup V_2,A)$ be a bipartite, biconnected digraph of order $n$ and outradius $r$, where $r \ge 4$ is even.
	Then for any vertex $v \in V_1$, the total degree $\deg(v)\le 2|V_2|-(r-2).$
	Equality can occur if and only if
	\begin{itemize}
		\item $\deg^-(v)=|V_2|$ and $\deg^+(v)=|V_2|-(r-2)$,
		\item there exist two disjoint directed paths $vu_1u_2\ldots u_{r}$ and $v w_2 w_3 \ldots w_r$ in $D$ with $d(v,u_r)=r$ and $d(v,w_r)=r-1$	and furthermore any two shortest paths from $v$ to $u_r$ and from $v$ to $w_r$ are disjoint.
	\end{itemize}
\end{prop}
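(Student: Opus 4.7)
The plan is to adapt the proof of Proposition~\ref{proplem_size} by exploiting bipartiteness. Since $v \in V_1$, both in- and out-neighbours of $v$ lie in $V_2$, so $\deg(v) \le 2|V_2|$; it therefore suffices to exhibit at least $r-2$ distinct ``missing slots'' among the pairs $(v,u)$ with $u\in V_2$ for which either $\vc{vu}$ or $\vc{uv}$ is absent. Fix $r' := \ecc^+(v) \ge r$ and a shortest out-path $P_u : v, u_1, u_2, \ldots, u_{r'}$; bipartiteness forces $u_i \in V_2$ precisely when $i$ is odd. For every odd $i$ with $3 \le i \le r'$, the arrow $\vc{vu_i}$ is absent (otherwise $P_u$ could be shortened), yielding at least $r/2 - 1$ missing out-arrows when $r' = r$, and at least $r/2$ when $r' > r$ (already producing a strict bound). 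By biconnectedness, some $\vc{u_{i_0}v}$ lies in $A$ for an odd $i_0 \in [1, r']$ (otherwise $\deg^-(v) \le |V_2| - r/2$ directly closes the gap); take $i_0$ minimal. Then for every odd $j \in [1, i_0-2]$ the non-arrow $\vc{u_jv} \notin A$ contributes an additional missing in-arrow slot.

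Next, I invoke $\ecc^+(u_{i_0}) \ge r$ to obtain a vertex $x$ with $d(u_{i_0}, x) \ge r$. Since $d(u_{i_0}, v) = 1$, the triangle inequality forces $d(v, x) \ge r - 1$, and a shortest out-path $P_y : v, y_1, y_2, \ldots, y_d$ with $d \in \{r-1, r\}$ exists. The crucial disjointness lemma states that $y_i \ne u_i$ for every $i_0 \le i \le d$: otherwise the concatenation $u_{i_0} \to u_{i_0+1} \to \cdots \to u_i = y_i \to y_{i+1} \to \cdots \to y_d = x$ yields $d(u_{i_0}, x) \le (i - i_0) + (d - i) = d - i_0 \le r - 1$, contradicting the choice of $x$. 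Consequently, the sets of missing out-arrow endpoints $\{u_j : j\text{ odd},\, 3 \le j \le r-1\}$ and $\{y_j : j\text{ odd},\, 3 \le j \le \min(d, r-1)\}$ (each of size $r/2 - 1$) can overlap only at indices $j < i_0$, and each such overlap is matched one-for-one by the missing in-arrow $\vc{u_jv}$ previously identified. A direct case analysis over $i_0 \in \{1, 3, \ldots, r-1\}$ then confirms the total number of missing slots is always at least $r - 2$, strictly more whenever $i_0 > 1$ or $r' > r$.

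For the equality characterization, this counting forces $r' = r$, $i_0 = 1$, and hence $\vc{u_1v} \in A$, $\deg^-(v) = |V_2|$, and $\deg^+(v) = |V_2| - (r - 2)$. The required second path $v, w_2, \ldots, w_r$ of length $r-1$ with $d(v, w_r) = r - 1$ is extracted from $P_y$ (truncated to its length-$(r-1)$ prefix when $d = r$), and the disjointness lemma (applied with $i_0 = 1$) shows that $P_u$ and this second path share only $v$. The stronger \emph{all-shortest-paths} disjointness then follows by a second application of the same idea: were there an alternative shortest $v$-to-$u_r$ path branching through some $y_i$, a further application of $\ecc^+$ to that branching vertex would produce yet another vertex at distance $\ge r - 1$ from $v$, which (after checking that it does not coincide with any previously enumerated $u_j$ or $y_j$) would provide an additional missing out-neighbour of $v$ beyond the $r - 2$ already exhausted, contradicting equality. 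The main technical obstacle I anticipate is the bookkeeping step in the second paragraph for intermediate values $3 \le i_0 \le r - 1$, where each admissible overlap between the two out-paths' missing out-neighbour sets must be matched explicitly with a compensating missing in-arrow from the initial segment of $P_u$.
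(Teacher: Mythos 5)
Your treatment of the main case $\ecc^+(v)=r$ is essentially the paper's own argument (minimal back-arrow index, a far vertex $x$ from $\ecc^+\ge r$, disjointness of the two shortest paths, overlaps for $i_0\ge 3$ compensated by the missing in-arrows $\vc{u_jv}$), and that part of the bookkeeping checks out. The genuine gap is in the case $r':=\ecc^+(v)>r$. There you still source the second path at $u_{i_0}$, using only $d(u_{i_0},v)=1$ to conclude $d(v,x)\ge r-1$; but $d:=d(v,x)$ can then be as large as $r'$, so your claim that $d\in\{r-1,r\}$ is false, and with it the disjointness lemma collapses: the contradiction $d(u_{i_0},x)\le d-i_0$ only bites when $d-i_0<r$, which need not hold. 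Concretely, if $\vc{u_1v}\in A$ (so $i_0=1$) and $r'\ge r+1$, nothing prevents choosing $x=u_{r'}$ (one may well have $d(u_1,u_{r'})=r'-1\ge r$), in which case the path $P_y$ is an initial segment of $P_u$ and contributes no new missing arrows; your count then bottoms out at $\lfloor (r'-1)/2\rfloor$ missing out-arrows and no missing in-arrows, which for $r<r'<2r-3$ is strictly less than $r-2$. Since the equality characterization also rests on first excluding $r'>r$, this case cannot be absorbed into the parenthetical ``already producing a strict bound.''

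The paper closes exactly this hole by a shift of source vertex: it takes $i$ minimal in the window $[r'-r+1,r']$ (not $[1,r']$) with $\vc{u_iv}\in A$, invokes $\ecc^+(u_{r'-r+1})\ge r$ (not $\ecc^+(u_i)$) to produce $x$ with $d(u_{r'-r+1},x)\ge r$, and uses $d(u_{r'-r+1},v)\le i+r-r'$ to get $d(v,x)\ge r'-i$. Because the source is the first vertex of the window, a shortest $v$--$x$ path meeting any $u_j$ with $j\ge r'-r+1$ would force $d(u_{r'-r+1},x)\le d(v,x)-(r'-r+1)\le r-1$, so disjointness survives for every admissible value of $d(v,x)$; the missing in-arrows are then counted over $[r'-r+1,i)$ rather than $[1,i_0)$, and a separate parity and case analysis (on whether $i<r$ or $i>r$) delivers strictness when $r'>r$. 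You would need to import this device, or an equivalent one, to repair your case $r'>r$; the remainder of your write-up tracks the paper's proof.
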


\begin{proof}
	Let $\ecc^+(v)=r' \ge r$ and assume $vu_1u_2\ldots u_{r'}$ is a directed path in $D$ with $d(v,u_{r'}) =r'.$
	Let $i$ be the smallest index with $r'-r+1 \le i \le r'$ for which there is an arrow from $u_i$ to $v.$ Note that this one has to exist, or otherwise $\deg^+(v)\le |V_2|-\left(\lceil \frac{r'}{2} \rceil -1\right)$ and $\deg^-(v)\le |V_2|-\frac{r}2$ and so $\deg(v) <2|V_2|-(r-2).$\\
	Similarly as in Proposition~\ref{proplem_size}, there is a vertex $x$ with $d(u_{r'-r+1},v)+d(v,x)\ge d(u_{r'-r+1},x) \ge r$ for which $d(v,x) \ge r'-i.$\\
	The shortest path from $v$ to $x$ does not pass any $u_j$ for some $r' \ge j \ge r'-r+1$.
	So we have at least $\lceil \frac{r'-i}{2} \rceil -1$ vertices $w \in V_2 $, on the shortest path from $v$ to $x$, for which there is no directed edge from $v$ to $w$.\\
	First assume $r'>r$.
	There is no arrow from $v$ to any of the $\frac r2$ $u_j \in V_2$ where $r'-r+1 \le j \le r'$.
	Furthermore $v$ is not in the outneighbourhood of any of the $i+r-r'-1$ $v_j$ with $r'-r+1 \le j < i$. Since $i$ is odd, there are $\lceil \frac{r-r'+i}{2} \rceil -1$	of these belonging to $V_2$.
	We conclude that $\deg(v) \le 2|V_2|-\frac r2-\lceil \frac{r-r'+i}{2} \rceil-\lceil \frac{r'-i}{2} \rceil +2 \le 2|V_2|-r+2.$
	Furthermore, equality can only be attained when $r'$ and $i$ are both odd.
	But this implies that $r'-r+1$ is even and hence not equal to $i$. If $i<r$, we would know all possible arrows containing $v$ which are missing and so $\ecc^+(u_i)<r$.
	In the case that $i>r$, we have $\deg^+(v) \le |V_2|-\left(\lceil \frac{r'}{2} \rceil-1 \right)$ and $\deg^-(v)\le |V_2|-\left(\lceil \frac{r-r'+i}{2} \rceil -1 \right)$.
	Since $\lceil \frac{r'}{2} \rceil+\lceil \frac{r-r'+i}{2} \rceil > \frac{r'}{2} + \frac{r-r'+r}{2} =r$, we conclude again.
	
	So now we consider the case $r'=r$.
	If $1<i<r$, analogously as before by picking a vertex $x$ such that $d(u_i,x) \ge r$, again we conclude $\deg(v) \le 2|V_2|-(r-1)$.
	There are at least $\frac r2 -1$ vertices on the shortest path from $v$ to $x$ belonging to $V_2$ such that no arrow from $v$ ends in one of these vertices.
	Also there are no arrows from $u_j$ to $v$ for odd $j<i$ and from $v$ to $u_j$ for $j\ge i$.
	In the remaining case with $i=1$, to ensure $\ecc^+(u_1)\ge r$, there has to be a shortest path from $v$ to $x$ which is disjoint from $vu_1u_2 \ldots u_r$, where $d(v,x) \ge r-1.$
\end{proof}

\begin{thr}\label{thr:bipbicDigraphMaxSize}
	Let $r \ge 4$ be even. Then there exists a value $n_1(r)$ such that for every $ n \ge n_1$, such that for any bipartite biconnected digraph $D$ of order $n$ and outradius $r$,
	$$\lvert A(D) \rvert \le \left\lvert A\left(D_{n,r,\lfloor \frac{n}{2}-(r-1) \rfloor } \cap K_{ \lfloor \frac{n}{2} \rfloor, \lceil \frac{n}{2} \rceil }\right) \right\rvert.$$
	Furthermore the digraph $D_{n,r,\lfloor \frac{n}{2}-(r-1) \rfloor } \cap K_{ \lfloor \frac{n}{2} \rfloor, \lceil \frac{n}{2} \rceil }$ is the unique bipartite biconnected digraph maximizing the size given $n$ and $r$.
\end{thr}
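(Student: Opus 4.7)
The plan is to follow the strategy already deployed for Theorem~\ref{mainVz_bicDi}, adapted to the bipartite setting via the three preparatory lemmas (Lemma~\ref{lem:Kkksubdigraph}, Lemma~\ref{lem:removevertexwithoutchangingdistances} and Proposition~\ref{proplem_size2}). First I would establish the rough benchmark: the target digraph $D^\star := D_{n,r,\lfloor n/2-(r-1)\rfloor}\cap K_{\lfloor n/2\rfloor,\lceil n/2\rceil}$ has size $\tfrac{n^2}{2}-O(rn)$, so if $D$ is extremal then $\lvert A(D)\rvert\ge \tfrac{n(n-t)}{2}$ for some $t=t(r)$ linear in $r$. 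Feeding this into Lemma~\ref{lem:Kkksubdigraph} provides a bidirected complete bipartite subdigraph $K_{k,k}\subseteq D$ with $k=\Theta(n/t)$, every vertex of which has total degree at least $2|V_i|-3t+1$ in the appropriate class.

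Next I would choose $n_1(r)$ large enough that Lemma~\ref{lem:removevertexwithoutchangingdistances} can be iterated. At each stage, and for each of the two bipartition classes, it produces a vertex $v$ in the current $K_{k,k}$ whose removal preserves both the outradius $r$ and all distances; alternating the class of the removed vertex keeps the two sides balanced. I would continue peeling until the order drops to some small constant $m_0=m_0(r)$. Proposition~\ref{proplem_size2} bounds the total degree of each removed vertex by $2|V_j'|-(r-2)$, where $V_j'$ is the opposite class at that moment. If strict inequality holds at every peeling step, then summing these bounds over all $n-m_0$ steps and comparing with $\lvert A(D^\star)\rvert$ yields $\lvert A(D)\rvert<\lvert A(D^\star)\rvert$ for $n\ge n_1$, a contradiction.

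Therefore equality in Proposition~\ref{proplem_size2} must occur at some stage with current digraph $D_m=(V_1'\cup V_2',A_m)$. At this point the equality characterization furnishes two internally disjoint directed paths $vu_1u_2\ldots u_r$ and $vw_2w_3\ldots w_r$ of lengths $r$ and $r-1$, together with $\deg^-(v)=|V_j'|$ and $\deg^+(v)=|V_j'|-(r-2)$. Using bipartiteness and the eccentricity constraints (every $u_i$ and $w_i$ must still have out-eccentricity at least $r$, and $v$ must be reachable from everything), I would argue exactly as in Theorem~\ref{mainVz_bicDi} that the arcs inside $B=\{v,u_1,\ldots,u_r,w_2,\ldots,w_r\}$ are forced to form the bipartite subdigraph of $D_{2r,r,1}$ obtained by the intersection with the natural balanced $K_{r,r}$. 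Then, for each remaining vertex $z\in V(D_m)\setminus B$, the same constraints restrict $N^+(z)\cap B$ to lie in one of two explicit four-element sets (the bipartite analogues of the sets $\{w_2,v,u_1\}$ and $\{v,u_1,u_2\}$ found in the proof of Theorem~\ref{mainVz_bicDi}), so $D_m$ is forced to be isomorphic to a balanced bipartite subdigraph of $D_{m,r,s}$.

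Finally I would rebuild $D$ by re-inserting the peeled vertices one at a time in reverse order. At each reinsertion step the equality case of Proposition~\ref{proplem_size2} must again be attained (else the size deficit accumulated during peeling exceeds $\lvert A(D^\star)\rvert - (n_{\mathrm{current}}\text{-vertex term})$), which forces the new vertex to join one of the two blow-up positions $v_1$ or $w_1$, and the alternation of classes during peeling guarantees the bipartition stays balanced, so exactly the choice $s=\lfloor n/2-(r-1)\rfloor$ is produced. The main obstacle I expect is Step~3, the case analysis pinning down $D_m$ and the arcs between $B$ and the rest: the bipartite constraint together with biconnectedness and the two shortest paths of different parities generates several more configurations than in Theorem~\ref{mainVz_bicDi}, and one must carefully exclude any alternative (such as a blow-up of a bipartite $C_{r+2}$) by invoking the strict inequality in the odd-path length case of Proposition~\ref{proplem_size2}, which itself only yields uniqueness when $r$ is even, explaining why the theorem is restricted to that parity.
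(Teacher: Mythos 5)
Your overall architecture matches the paper's: lower-bound the extremal size by $\tfrac{n(n-t)}{2}$, extract a high-degree bidirected $K_{k,k}$ via Lemma~\ref{lem:Kkksubdigraph}, peel removable vertices via Lemma~\ref{lem:removevertexwithoutchangingdistances}, and control the size loss per step via Proposition~\ref{proplem_size2}. The gap is at the decisive uniqueness step. You derive only that equality in Proposition~\ref{proplem_size2} occurs at \emph{some} stage for a \emph{single} vertex $v$, and then claim the case analysis of Theorem~\ref{mainVz_bicDi} transfers ``exactly''. It does not. In the non-bipartite proof the equality condition $\deg^-(v)=n-1$ means every vertex sends an arc to $v$, which is what makes the counting of arcs between $B$ and the rest close. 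In the bipartite setting a single equality vertex only gives $N^-(v)=V_2$ and the two outgoing paths; it does not tell you that $u_1$ (alias $v'$) is a second blow-up centre with full in-degree from $V_1$, and without that the arc bounds between $B$ and $R$ do not pin down the $D_{2r,r,1}$-skeleton. The paper's actual mechanism is different and stronger: the size accounting shows that all but $O_r(1)$ of the peeling steps must attain equality, the peeling alternates between the two (nearly balanced) classes, and a pigeonhole argument then produces two \emph{adjacent} equality vertices $v\in V_1$ and $v'\in V_2$ inside the same $K_{k,k}$. It is the interlocking of the two path systems of $v$ and $v'$ (leading to the dichotomy ``$v'$ plays the role of $w_2$'' versus ``$v'$ plays the role of $v_1$'', the first being killed by $\ecc^+(v_2)<r$) that forces the structure. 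Your quantitative accounting (``if strict inequality holds at every step, contradiction'') only yields one equality step, which is too weak even to start this argument.

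A secondary inaccuracy: you invoke ``the strict inequality in the odd-path length case of Proposition~\ref{proplem_size2}'' to exclude alternatives such as a blow-up of a bipartite cycle. The paper's remark about $C_{r+1}$ blow-ups explains why the equality characterization in Proposition~\ref{proplem_size2} fails to be rigid for odd $r$; it is not a tool deployed inside the even case, where the exclusion of alternative configurations is instead carried out by the two-adjacent-vertices case analysis and the eccentricity arguments on the arcs among $\{v_i\}$, $\{w_j\}$, $V'_1$ and $V'_2$. To repair your proof you would need either to supply the pigeonhole step producing the adjacent pair, or to give a genuinely new single-vertex counting argument in the bipartite setting, which you have not sketched.
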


\begin{proof}
	Let $t=2(r-2)$ and $n_0(r)=18t(45t^2+6t+1)+2$. 
	Assume that $n \ge n_0$ and that $D=(V_1 \cup V_2,A)$ satisfies $\lvert A(D) \rvert \ge \lvert A(D_{n,r,\lfloor \frac{n}{2}-(r-1) \rfloor } \cap K_{ \lfloor \frac{n}{2} \rfloor, \lceil \frac{n}{2} \rceil }) \rvert \ge \frac{n(n-t)}{2}+1$.
	Note that $ \frac{n}{18t} > 2(18t^2+3t) +1+ 9t^2 \ge 2(18t^2+3t) +1+\frac{ tn}{2k}$.
	So by Lemma~\ref{lem:Kkksubdigraph} and Lemma~\ref{lem:removevertexwithoutchangingdistances}, we have a vertex $v$ in the largest bipartition class such that $D_1=D \backslash v$ is still biconnected and also has outradius $r.$
	Repeating this, gives a sequence $D_0=D,D_1,D_2, \ldots, D_{n-n_0}$ of digraphs all being biconnected and of outradius $r$. 
	Since
	\begin{equation}\label{eq:difference_size_Di}
	\lvert A(D_i) \rvert-\lvert A(D_{i+1})\rvert =\deg_{D_i}(v)\le 2 \left \lfloor \frac{n-i}2 \right\rfloor -(r-2)
	\end{equation} by Proposition~\ref{proplem_size2}, we see inductively that $\lvert A(D_i)\rvert \le \frac{(n-i)(n-i-t)}2$ does hold for every $0 \le i \le n-n_0$.
	Note for this that two consecutive numbers cannot be both even.
	Take $n_1(r)=5rn_0>n_0+5(r-2)n_0$.
	
	Note that $\lvert A(D_{n-n_0}) \rvert \le \binom{n_0}{2}$.
	This implies that if there are more than $(r-2)n_0$ values $i$ for which there is no equality in Equation~\eqref{eq:difference_size_Di}, $\lvert A(D) \rvert \ge \frac{n(n-t)}{2}$ was impossible.
	In particular, this implies that the partition classes cannot differ by more than $(r-2)n_0$.
	Furthermore, once they are balanced, we can assume we discard alternately a vertex from $V_1$ and one of $V_2$ to get the sequence of digraphs $(D_i)_i.$
	By the pigeon hole principle, we note that there must be two consecutive vertices discarded for which Equation~\eqref{eq:difference_size_Di} does hold and we can assume they did belong to the same digraph $K_{k,k},$ i.e. are neighbours.
	
	Let $v \in V_1$ and $v' \in V_2$ be two such neighbours and let $D_i$ be the digraph with largest index containing both $v$ and $v'.$
	By Proposition~\ref{proplem_size2}, we know the structure of $D_i$ partially.
	Since $v$ and $v'$ are neighbours, $N^+(v)$ cannot contain any of $\{u_4,u_6, \ldots u_r\}$ and $\{w_5,w_7, \ldots, w_{r-1}\}$.
	Since $\deg(v')=2|V_1|-(r-2)$ it has $u_2$ or $w_3$ as a neighbour (and not both).
	
	Assume the first case, i.e. $\vc{v'w_3} \in A$. In this case we can assume $v'=w_2$ actually.
	Since the same structure of Proposition~\ref{proplem_size2} does hold for $v'$, we have $v_2v' \in A$ and hence $\ecc^+(v_2)\le r-1$, contradiction. In Figure~\ref{fig:Dnrs_bip_partial_case1} this is illustrated in the case $r=4$, with $V'_2 \subset V_2$ and $V'_1 \subset V_1$ being the remaining vertices of both sets of vertices.
	
	\begin{figure}[h]
		\centering
		
		\begin{tikzpicture}
		\definecolor{cv0}{rgb}{0.0,0.0,0.0}
		\definecolor{c}{rgb}{1.0,1.0,1.0}
		\Vertex[L=\hbox{$V'_1$},x=6.5,y=-1.5]{w5}
		\Vertex[L=\hbox{$V'_2$},x=5.5,y=1.5]{v5}
		\Vertex[L=\hbox{$v_2$},x=13,y=0]{v1}
		\Vertex[L=\hbox{$v_3$},x=15,y=0]{v2}
		\Vertex[L=\hbox{$v_4$},x=16.5,y=0]{v3}
		\Vertex[L=\hbox{$v'$},x=5,y=0]{w1}
		\Vertex[L=\hbox{$w_3$},x=3,y=0]{w2}
		\Vertex[L=\hbox{$w_4$},x=1.5,y=0]{w3}
		\Vertex[L=\hbox{$v$},x=7,y=0]{w0}
		
		\Vertex[L=\hbox{$v_1$},x=10cm,y=0.0cm]{v0}
		
		\Edge[lw=0.1cm,style={post, right}](v0)(v1)
		\Edge[lw=0.1cm,style={post, right}](v1)(v2)
		\Edge[lw=0.1cm,style={post, right}](w0)(v0)
		\Edge[lw=0.1cm,style={post, right}](v2)(v3)
		\Edge[lw=0.1cm,style={post, right}](v0)(w0)
		\Edge[lw=0.1cm,style={post, right}](w1)(w2)
		\Edge[lw=0.1cm,style={post, right}](w0)(v5)
		\Edge[lw=0.1cm,style={post, right}](v5)(w0)
		\Edge[lw=0.1cm,style={post, right}](w1)(w5)
		\Edge[lw=0.1cm,style={post, right}](w5)(w1)
		\Edge[lw=0.1cm,style={post, right}](w0)(w1)
		\Edge[lw=0.1cm,style={post, right}](w1)(w0)
		\Edge[lw=0.1cm,style={post, right}](w2)(w1)
		\Edge[lw=0.1cm,style={post, right}](w2)(w3)	
		
		\Edge[lw=0.1cm,style={post, bend left}](w3)(w0)
		
		\Edge[lw=0.1cm,style={post, bend right}](v2)(w0)
		\Edge[lw=0.1cm,style={post, bend left=12.5}](v1)(w1)
		\Edge[lw=0.1cm,style={post, bend left=15}](v3)(w1)
		
		%
		%
		
		\end{tikzpicture}
		\caption{A possible substructure of a digraph with the maximum size, when $r=4$}
		\label{fig:Dnrs_bip_partial_case1}
	\end{figure}
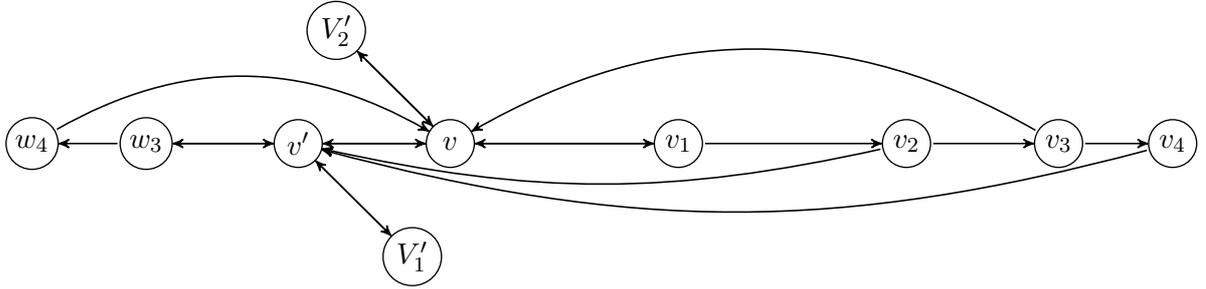
	So we can go on with the case that $v'=v_1$.
	We know some of the arrows already by using Proposition~\ref{proplem_size2} for both $v$ and $v'$.
	This is sketched in Figure~\ref{fig:Dnrs_bip_partial}.
	There can be no arrow from a $v_i$ with $i\ge 3$ odd to some $w_j$ with $j \ge 2$ (odd as well) as then $\ecc^+(v_i)<r$.
	An arrow from $v_i$ with $i$ even to $w_j$ with $r>j \ge 2$ (even) would also imply that $\ecc^+(v_i)<r$.
	Last, if $\vc{v_i w_r} \in A$ for $i$ even, it would imply that $\ecc^+(v_{i-1})<r$.
	The same holds in the opposite direction, with $w_i$ and $v_j.$
	Also for every vertex $u'$ in $V'_2$, its outneighbourhood can only contain $v,V'_1$ and one of $v_2$ or $w_3$. 
	Similar one gets a bound for $V'_1$.
	All of this results in an upper bound on the size of the digraph which is exactly equal to the size of the conjectured extremal digraph.
	Note that no $u'$ in $V'_2$ can be connected to $w_3$ in a digraph attaining the upper bound, as then again $\ecc^+(v_2)<r$.
	So every $u' \in V'_2$ will have the same set of neighbours as $v'$ and so do vertices in $V'_1$ and $v$.
	Last, we note that the maximum is only attained if $V'_1$ and $V'_2$ are balanced and so the extremal digraph is exactly as stated.
\end{proof}

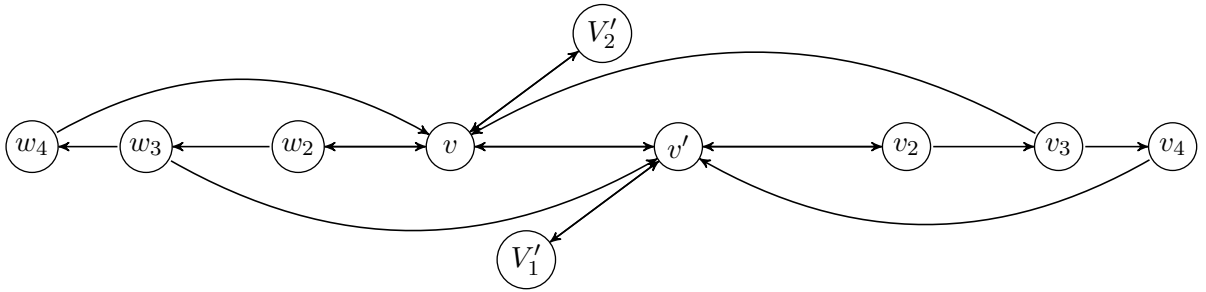
\begin{figure}[h]
	\centering

	\begin{tikzpicture}
	\definecolor{cv0}{rgb}{0.0,0.0,0.0}
	\definecolor{c}{rgb}{1.0,1.0,1.0}

	\Vertex[L=\hbox{$v_2$},x=13,y=0]{v1}
	\Vertex[L=\hbox{$v_3$},x=15,y=0]{v2}
	\Vertex[L=\hbox{$v_4$},x=16.5,y=0]{v3}
	\Vertex[L=\hbox{$w_2$},x=5,y=0]{w1}
	\Vertex[L=\hbox{$w_3$},x=3,y=0]{w2}
	\Vertex[L=\hbox{$w_4$},x=1.5,y=0]{w3}
	\Vertex[L=\hbox{$v$},x=7,y=0]{w0}
	
	\Vertex[L=\hbox{$V'_1$},x=8,y=-1.5]{w5}
	\Vertex[L=\hbox{$V'_2$},x=9,y=1.5]{v5}
	
	\Vertex[L=\hbox{$v'$},x=10cm,y=0.0cm]{v0}
	
	\Edge[lw=0.1cm,style={post, right}](w0)(v5)
	\Edge[lw=0.1cm,style={post, right}](v5)(w0)
	\Edge[lw=0.1cm,style={post, right}](v0)(w5)
	\Edge[lw=0.1cm,style={post, right}](w5)(v0)
	
	\Edge[lw=0.1cm,style={post, right}](v0)(v1)
	\Edge[lw=0.1cm,style={post, right}](v1)(v2)
	\Edge[lw=0.1cm,style={post, right}](w0)(v0)
	\Edge[lw=0.1cm,style={post, right}](v1)(v0)
	\Edge[lw=0.1cm,style={post, right}](v2)(v3)
	\Edge[lw=0.1cm,style={post, right}](v0)(w0)
	\Edge[lw=0.1cm,style={post, right}](w1)(w2)
	\Edge[lw=0.1cm,style={post, right}](w0)(w1)
	\Edge[lw=0.1cm,style={post, right}](w1)(w0)
	\Edge[lw=0.1cm,style={post, right}](w2)(w3)	
	
	\Edge[lw=0.1cm,style={post, bend right}](w2)(v0)
	\Edge[lw=0.1cm,style={post, bend left}](w3)(w0)
	
	\Edge[lw=0.1cm,style={post, bend right}](v2)(w0)
	
	\Edge[lw=0.1cm,style={post, bend left}](v3)(v0)

	\end{tikzpicture}
	\caption{A second possible substructure of a digraph with the maximum size, when $r=4$}
	\label{fig:Dnrs_bip_partial}
\end{figure}

\begin{remark}
	It is not hard to see that the same proof gives the asymptotic formula for the maximum size given outradius $r$ for $r$ odd as well, since Proposition~\ref{proplem_size2} still applies. 
	That is, for every $r \ge 4$ and any biconnected bipartite digraph $D$ one has $\lvert A(D) \rvert \le \frac{n^2}{2}-(r-2)n+O_r(1).$
\end{remark}

\section{Conclusion}\label{conc}

When considering order and diameter, as a corollary of folklore results~\cite{Ore68,P84} the extremal graphs or digraphs attaining the minimum average distance do maximize the size, but the set of extremal graphs are not equal. We investigated if there are similar relationships between the extremal graphs and digraphs attaining the minimum average distance or maximum size given order and radius. It turns out that the story is slightly harder in this case, as one can conclude from e.g. $Q_3$ and the examples in Figure~\ref{fig:digraph_rad2.5_SmallWm}.
Nevertheless, for $n$ large enough with respect to $r$, the extremal graphs are $G_{n,r,s}$ for both questions (by \cite{VZ67} and \cite[Thr.~1.2]{cambie2019extremal}) and so do the digraphs $D_{n,r,s}$ for the outradius $r$ (by \cite[Thr.~1.4]{cambie2019extremal} and Theorem~\ref{mainVz_bicDi}).
For digraphs given order and radius $\rad=r$, the story is again different. 
Taking a blow-up of vertex $2$ or $4$ by a $K_{n-5}$ in the left digraph in Figure~\ref{fig:digraph_rad_smallWm} results in a graph whose total distance is $7$ less than the $\overline{ \Gamma}_{n,6,3,n-6}$. So in this case, there would be no relation between the two sets of extremal digraphs if Conjecture~\ref{conj:VZdi_rad} is true.

While investigating the maximum size of biconnected digraphs given order and (out)radius, next to our main result Theorem~\ref{mainVz_bicDi}, we also wondered about the maximum size of bipartite digraphs given order and outradius.
We characterized the extremal bipartite digraphs in the general case. This has been done in Theorem~\ref{thr:bipdigraph_maxSize}.
In contrast, for biconnected bipartite digraphs, the extremal digraph seems to be unique in certain cases, which we proved for $r\ge 4$ even and $n$ sufficiently large in terms of $r$ (Theorem~\ref{thr:bipbicDigraphMaxSize}).

Overall, we conjectured the answers for~\cite[Prob.2 \& Prob.3]{D15} and proved these in certain cases. A few questions and conjectures are still open at this point.
Perhaps the most important conjecture (even while conjecture~\ref{conj:VZdi_rad} may be considered equally elegant) in this paper is conjecture~\ref{VZdi}, which once fully proven solves \cite[Problem 2]{D15}. 

\paragraph{Open access statement.} For the purpose of open access,
a CC BY public copyright license is applied
to any Author Accepted Manuscript (AAM)
arising from this submission.

\bibliographystyle{abbrv}
\bibliography{MaxMuD}

\end{document}